\theoremstyle{plain}
\newtheorem{thm}{Theorem}[section]
\newtheorem{prop}[thm]{Proposition}
\newtheorem{lem}[thm]{Lemma}
\newtheorem{cor}[thm]{Corollary}
\theoremstyle{definition}
\newtheorem{defn}{Definition}
\theoremstyle{remark}
\newtheorem{remark}{Remark}
\newtheorem{question}{Question}
    \def\E{{\mathbb{E}}} \def\F{{\mathbb{F}}}        \def\N{{\mathbb{N}}}  \def\P{{\mathbb{P}}}           \def\Z{{\mathbb{Z}}}
        \def\bfI{{\bf{I}}}   \def\bfL{{\bf{L}}}  \def\bfN{{\bf{N}}}    \def\bfR{{\bf{R}}} \def\bfS{{\bf{S}}}     \def\bfX{{\bf{X}}} \def\bfY{{\bf{Y}}} 
\def\bfa{{\bf{a}}} \def\bfb{{\bf{b}}}                     \def\bfw{{\bf{w}}}   
\def\cA{{\mathcal{A}}} \def\cB{{\mathcal{B}}}   \def\cE{{\mathcal{E}}} \def\cF{{\mathcal{F}}} \def\cG{{\mathcal{G}}}         \def\cP{{\mathcal{P}}}  \def\cR{{\mathcal{R}}} \def\cS{{\mathcal{S}}} \def\cT{{\mathcal{T}}}  \def\cV{{\mathcal{V}}} \def\cW{{\mathcal{W}}}   
\def\sA{{\mathscr{A}}} \def\sB{{\mathscr{B}}}                        
       \def\tH{{\tilde{\rm{H}}}}                  
\newcommand{\G}{\Gamma}
\newcommand{\Ga}{\Gamma}
\newcommand{\eps}{\epsilon}
\renewcommand\a{\alpha}
\renewcommand\b{\beta}
\renewcommand\d{\delta}
\renewcommand\k{\kappa}
\renewcommand\l{\lambda}
\renewcommand\o{\omega}
\newcommand\s{\sigma}
\def\bftom{{\mbox{\boldmath $\tilde{\omega}$}}}
\def\bfom{{\mbox{\boldmath $\omega$}}}
\def\bfcF{{\mbox{\boldmath $\cF$}}}
\def\bfphi{\mbox{\boldmath $\phi$}} 
\def\bfcT{{\mbox{\boldmath $\cT$}}}
\def\bfpsi{\mbox{\boldmath $\psi$}} 
\def\bftau{\mbox{\boldmath $\tau$}} 
\newcommand\Aut{\operatorname{Aut}}
\newcommand\FRI{{\operatorname{FRI}}}
\newcommand\Geom{{\operatorname{Geom}}}
\newcommand\Poisson{\operatorname{Poisson}}
\newcommand\Prob{\operatorname{Prob}}
\newcommand\Var{\operatorname{Var}}
\def\cc{{\curvearrowright}}
\newcommand{\resto}{\upharpoonright}
\begin{document}
\title{Finitary random interlacements and the Gaboriau-Lyons problem}
\author{Lewis Bowen\footnote{supported in part by NSF grant DMS-1500389} \\ University of Texas at Austin}
\maketitle

\begin{abstract}
The von Neumann-Day problem asks whether every non-amenable group contains a non-abelian free group. It was answered in the negative by Ol'shanskii in the 1980s. The measurable version (formulated by Gaboriau-Lyons) asks whether every non-amenable measured equivalence relation contains a non-amenable treeable subequivalence relation. This paper obtains a positive answer in the case of arbitrary  Bernoulli shifts over a non-amenable group, extending work of Gaboriau-Lyons. The proof uses an approximation to the random interlacement process by random multisets of geometrically-killed  random walk paths. There are two applications: (1) the Gaboriau-Lyons problem for actions with positive Rokhlin entropy admits a positive solution, (2) for any non-amenable group, all Bernoulli shifts factor onto each other.
\end{abstract}

\noindent
{\bf Keywords}: random interlacement, von Neumann-Day problem, Gaboriau-Lyons, non-amenable groups, Bernoulli shifts\\
{\bf MSC}:37A35\\

\noindent
\tableofcontents

\section{Introduction} 


Amenability is inherited by subgroups and nonabelian free groups are non-amenable. These observations lead to the von Neumann-Day problem: does every non-amenable group contain a  nonabelian free group? It was disproven by Ol'shankii \cite{olshankii-book}. However, the analogous problem for measured equivalence relations remains open. This article focuses on the special case of orbit-equivalence relations arising from free actions of countable groups:
\begin{question}[Gaboriau-Lyons \cite{gaboriau-lyons}]
Suppose $\G$ is a countable group, $(X,\cB,\mu)$ a standard probability space and $\G \cc (X,\cB,\mu)$ is an essentially free, ergodic, measure-preserving action. If $\G$ is non-amenable then does there necessarily exist an essentially free, ergodic action $\F_2 \cc (X,\cB,\mu)$ of the rank 2 free group such that each orbit of the $\F_2$ action is contained in an orbit of the $\G$ action (after neglecting a measure zero set)?
\end{question}

An action $\G \cc (X,\cB,\mu)$ is {\bf von Neumann-Day} (vND) if either $\G$ is amenable or the above question admits a positive answer. An action is {\bf weakly von Neumann-Day} (weakly vND) if it satisfies the same condition as vND with the exception that the $\F_2$ action is allowed to be non-ergodic. So the Gaboriau-Lyons problem asks whether every essentially free, ergodic action of a countable group is vND\footnote{The problem whether every ergodic measured equivalence relation is weakly vND was formulated by D. Gaboriau as Question 5.16 of his HDR thesis \cite{gaboriau-thesis}.}. 

Important progress was made by Gaboriau-Lyons \cite{gaboriau-lyons}. Their result concerns Bernoulli shifts which are defined as follows. Let $K$ be a standard Borel space and $\k$ a Borel probability measure on $K$. Consider the product space $K^\G=\{x:\G \to K\}$ with the product sigma-algebra and the product measure $\k^\G$. The group $\G$ acts on $K^\G$ by shifting: 
  $$(gx)(f)=x(g^{-1}f), \quad g, f\in \G, x\in K^\G.$$
  The action $\G \cc (K,\k)^\G$ is the {\bf Bernoulli shift over $\G$ with base space $(K,\k)$}. 
  
    If $\k$ is supported on a countable subset of $K$ then the {\bf Shannon entropy} of $\k$ is defined by
 $$H(\kappa):=-\sum_{k\in K} \kappa(k) \log\kappa(k).$$
 If $\kappa$ is not supported on a countable set then $H(\kappa)$ is defined to be $+\infty$.  In \cite{bowen-jams-2010} and \cite{MR2813530} it is shown that if $\G$ is sofic then the Shannon entropy of the base space is a measure-conjugacy invariant. In \cite{seward-ornstein} Seward proved that, for any countable group $\G$, if two base spaces have the same Shannon entropy then the corresponding Bernoulli shifts are isomorphic. Special cases of this result were previously obtained by Ornstein \cite{ornstein-1970a} (for $\G=\Z$), Stepin \cite{stepin-1975} (for groups containing an infinite cyclic subgroup), Ornstein-Weiss \cite{OW80} (for amenable groups), and the author  \cite{bowen-ornstein-2012} (for all countable groups and base spaces of cardinality $>2$). All of the above works build on the fundamental work of Ornstein \cite{ornstein-1970a}.

  Gaboriau and Lyons proved that for every non-amenable group $\G$ there exists some probability space $(K,\kappa)$ such that the Bernoulli shift $\Ga \cc (K,\kappa)^\Ga$ is von Neumann-Day \cite{gaboriau-lyons} (see also the expository article \cite{MR3051202} for further applications). The main result of this paper is that the Gaboriau-Lyons Theorem holds for {\em all} nontrivial $(K,\kappa)$.
 
\begin{thm}\label{thm:main2}
Let $\Ga$ be a countable non-amenable group and $(K,\kappa)$ a nontrivial probability space. Then the Bernoulli shift  $\G \cc (K,\kappa)^\G$ is von Neumann-Day.
\end{thm}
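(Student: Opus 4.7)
The strategy is to construct an essentially free ergodic $\F_2$ action inside the orbit equivalence relation of $\G\cc(K,\k)^\G$ by producing a suitable $\G$-equivariant random subgraph of the Cayley graph $\Cay(\G)$ as a measurable factor of the Bernoulli shift. The device inherent in the title is the \emph{finitary random interlacement} (FRI) process: at each $v\in\G$, independently sample a $\mathrm{Poisson}(u)$ number of random walks rooted at $v$, each of length independently geometrically distributed with parameter $\beta$, and let $\cF\subset\Cay(\G)$ be the union of the edges traversed. The distribution of $\cF$ is $\G$-invariant, and since all the input data is i.i.d.\ per site, $\cF$ is a natural candidate to be a factor of $\G\cc(K,\k)^\G$. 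The Gaboriau-Lyons argument cannot be applied to ordinary Bernoulli percolation here because the entropy of a Bernoulli percolation in the regime where it admits a unique infinite cluster is restricted, whereas FRI offers more flexible tuning.

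The first step is to realise $\cF$ as a measurable $\G$-equivariant factor of the Bernoulli shift, for arbitrary nontrivial $(K,\k)$. Since both the Poisson count and the geometric path lengths have finite Shannon entropy, and since the parameters $(u,\beta)$ can in principle be tuned to make the per-site entropy of $\cF$ as small as desired, one can hope to encode the required per-site randomness inside the entropy $H(\k)>0$ available in $(K,\k)^\G$. The subtle point is to do this in a $\G$-equivariant way without invoking a full Sinai-type factor theorem for non-amenable groups (which is, in effect, a consequence of Theorem \ref{thm:main2} rather than an input to it); one approach is a direct combinatorial encoding using the per-site randomness together with a canonical $\G$-equivariant choice of a small neighbourhood from which additional entropy may be drawn.

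The second step is to show that, for suitable parameters, $\cF$ percolates, has an almost surely unique infinite cluster $\cC_\infty$, and that $\cC_\infty$ is non-amenable and satisfies the Lyons-Schramm indistinguishability and insertion-tolerance properties. These are the hypotheses under which the Gaboriau-Lyons machinery produces an $\F_2$ action inside the orbit relation. I would establish these properties by comparison with Sznitman's random interlacement, which corresponds to FRI in the limit $\beta\to 0$ (with an appropriate rescaling of $u$); for the random interlacement on non-amenable Cayley graphs, the analogous properties are either known or directly accessible, and a coupling between FRI and RI should transfer them for small enough $\beta$.

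Granted these percolation properties, the Gaboriau-Lyons construction applies to the restriction of the orbit relation to $\cC_\infty$, and an ergodicity upgrade (using the residual randomness in $(K,\k)^\G$ not consumed by the factor map producing $\cF$) extends the resulting $\F_2$ action to all of $(K,\k)^\G$. The main obstacle is the simultaneous satisfaction of the two competing demands on $(u,\beta)$: the good percolation and indistinguishability properties typically require $u$ large and $\beta$ small, whereas the factor-of-Bernoulli-shift constraint becomes tight when $H(\k)$ is small. Finding a parameter window in which FRI approximates RI well enough to inherit uniqueness and indistinguishability, while simultaneously having low enough entropy per site to be encoded into an arbitrary nontrivial Bernoulli shift, is the technical heart of the proof.
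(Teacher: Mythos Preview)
Your overall architecture---FRI as a $\G$-equivariant factor, then the Gaboriau--Lyons machinery---matches the paper, but two design choices are inverted, and this is what manufactures the ``main obstacle'' you describe. You aim for a \emph{unique} infinite cluster and then try to show it is non-amenable. The paper does the opposite: it works in the \emph{non-uniqueness} phase, where $(V,\cE_\bfom)$ has infinitely many infinite clusters, each with infinitely many ends (Theorem~\ref{thm:main1}(3),(4)); infinitely many ends is what gives cost $>1$ for the cluster relation via \cite{gaboriau-cost}, and indistinguishability gives ergodicity of that relation on $X_\infty$. The point is that non-uniqueness occurs at \emph{low} intensity $u$ (Theorem~\ref{thm:main1.3}), while the base entropy of the FRI Bernoulli shift is $O(u)$ (Theorem~\ref{thm:main1.1}). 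So one takes $u$ small first and then $T$ large enough (Theorem~\ref{thm:main1.2}) to obtain both arbitrarily small entropy \emph{and} the required percolation picture at once; your tension ``$u$ large for good percolation versus entropy small'' evaporates. Targeting the uniqueness phase would indeed force large $u$ and hence large entropy, which is exactly the regime already covered by the original Gaboriau--Lyons argument.

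Second, two off-the-shelf inputs replace the delicate steps you sketch. You propose a direct combinatorial encoding of the FRI randomness into $(K,\k)^\G$; the paper instead shows the FRI action \emph{is} a Bernoulli shift with base entropy $<\eps$ (Theorem~\ref{thm:main1.1}) and then uses Seward's isomorphism theorem \cite{seward-ornstein} (Corollary~\ref{cor:factor100}) to factor any nontrivial $(K,\k)^\G$ onto it. For the upgrade from weak vND to vND you propose residual randomness; the paper instead invokes Chifan--Ioana solid ergodicity of Bernoulli shifts over non-amenable groups \cite{MR2647134} (Proposition~\ref{prop:se}), which forces any sub-$\F_2$-orbit relation to decompose into strongly ergodic pieces, from which Lemma~\ref{lem:treeable} and Hjorth's lemma recover an ergodic $\F_2$ action. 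Both inputs are essential to the argument as written, and neither is easily replaced by the ad hoc constructions you outline.
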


\subsection{Applications}

\subsubsection{Positive entropy actions}
In spectacular recent work, Seward generalized Sinai's Factor Theorem to all countable groups \cite{seward-sinai-30}. Combined with  Theorem \ref{thm:main2}, this implies:

\begin{cor}\label{thm:main3}
Let $\Ga$ be a countable non-amenable group and $\Ga \cc (X,\mu)$ a probability-measure-preserving ergodic essentially free action with positive Rokhlin entropy. Then this action is weakly von Neumann-Day. 
\end{cor}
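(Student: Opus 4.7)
The plan is to sandwich the problem between two powerful inputs: Seward's generalization of Sinai's Factor Theorem on top, and Theorem \ref{thm:main2} below, with a standard orbit-lifting argument gluing them together. Since the action $\Ga \cc (X,\mu)$ has positive Rokhlin entropy, Seward's theorem \cite{seward-sinai-30} provides a $\Ga$-equivariant factor map $\pi \colon (X,\mu) \to (K,\kappa)^\Ga$ for some nontrivial base $(K,\kappa)$. Theorem \ref{thm:main2} then supplies an essentially free ergodic p.m.p.\ action $\F_2 \cc (K,\kappa)^\Ga$ whose orbits are almost surely contained in the orbits of $\Ga$.

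The next step is to pull the $\F_2$-action back to $X$. Since $\Ga$ acts essentially freely on $(K,\kappa)^\Ga$, for each $g \in \F_2$ and a.e.\ $y$ there is a unique $\gamma(y,g) \in \Ga$ with $g \cdot y = \gamma(y,g) \cdot y$, and $\gamma$ is a measurable cocycle satisfying $\gamma(hy,g)\gamma(y,h) = \gamma(y,gh)$ a.e. I would then define an $\F_2$-action on $X$ by $g \cdot x := \gamma(\pi(x), g) \cdot x$. This is a Borel action on a conull set, it preserves $\mu$ because every acting element lies in $\Ga$, and by construction each $\F_2$-orbit on $X$ sits inside a $\Ga$-orbit. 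Essential freeness of the lift is immediate from essential freeness of the two input actions: if $g \cdot x = x$ on a positive-measure set, then $\gamma(\pi(x),g) \cdot x = x$, forcing $\gamma(\pi(x),g) = e$, so $g \cdot \pi(x) = \pi(x)$ on a positive-measure subset of $(K,\kappa)^\Ga$, whence $g = e$.

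The one feature worth flagging is that the lifted $\F_2$-action need not be ergodic, since Seward's theorem gives a Bernoulli factor but provides no control over the relative ergodicity of $\pi$. This is precisely why the conclusion is \emph{weakly} vND rather than vND. There is no substantive obstacle beyond this; the entire content is front-loaded into Theorem \ref{thm:main2} and Seward's factor theorem, with the pullback construction being routine cocycle bookkeeping.
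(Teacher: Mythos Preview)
Your proof is correct and matches the paper's approach exactly: the paper invokes Seward's Factor Theorem (Theorem~\ref{thm:seward-sinai-30}), Theorem~\ref{thm:main2}, and Lemma~\ref{lem:factor1}, and your cocycle pullback is precisely the content of the proof of Lemma~\ref{lem:factor1} written out inline. Your remark about why only \emph{weak} vND is obtained is also consistent with the paper's discussion.
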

The definition of Rokhlin entropy and further details are explained in \S \ref{sec:app1}. I conjecture that the actions in Corollary \ref{thm:main3} are actually vND. 


\subsubsection{Factors of Bernoulli shifts}

An action $\G \cc (X,\mu)$ {\bf factors onto} another action $\G \cc (Y,\nu)$ if there is a $\G$-equivariant measurable map $\Phi:X \to Y$ with $\nu = \mu \circ \Phi^{-1}$. Two actions are  {\bf weakly isomorphic} if they factor onto each other. 

If $\G$ is amenable then the entropy of a factor action is bounded above by the entropy of the source. This is false for non-amenable groups.   In fact Ornstein and Weiss exhibited by explicit example that the 2-shift over the rank 2 free group $\F_2$ factors onto the 4-shift \cite{OW87}. In \cite{ball-factors1}, K. Ball generalized this example to show that if $\G$ is any non-amenable group then there is some Bernoulli shift $\G \cc (K,\k)$ with $K$ finite that factors onto all Bernoulli shifts over $\G$. In \cite{bowen-ornstein-2011} I proved that if $\G$ contains a non-abelian free group then all Bernoulli shifts over $\G$ are weakly isomorphic. The techniques of these papers combined with \cite{seward-ornstein} and Theorem \ref{thm:main2} prove:

\begin{cor}\label{thm:main4}
Let $\G$ be any countable non-amenable group. Then all Bernoulli shifts over $\G$ are weakly isomorphic. 
\end{cor}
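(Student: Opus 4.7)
The plan combines four ingredients: Ball's theorem \cite{ball-factors1}, the Bowen-Ornstein weak-isomorphism argument \cite{bowen-ornstein-2011}, Seward's Ornstein isomorphism theorem \cite{seward-ornstein}, and Theorem \ref{thm:main2}.

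First, by \cite{ball-factors1} there is a finite-base Bernoulli shift $\G \cc (A,\alpha)^\G$ that factors onto every Bernoulli shift over $\G$. Hence to prove the corollary it suffices to show that every nontrivial Bernoulli shift over $\G$ factors onto $(A,\alpha)^\G$: given any two nontrivial Bernoulli shifts $B_1, B_2$, one then composes to obtain $B_1 \to (A,\alpha)^\G \to B_2$ and $B_2 \to (A,\alpha)^\G \to B_1$.

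Now fix a nontrivial $(K,\kappa)$ and set $B = (K,\kappa)^\G$. We must construct a $\G$-equivariant factor $\Phi : B \to (A,\alpha)^\G$. By Theorem \ref{thm:main2}, $\G \cc B$ is von Neumann-Day; moreover, the proof of that theorem produces the essentially free ergodic p.m.p.\ action $\F_2 \cc B$ in a $\G$-equivariant manner (via finitary random interlacements, which depend only measurably on the $\G$-action on $B$). This $\G$-equivariantly chosen $\F_2$-subrelation is the substitute for the subgroup embedding $\F_2 \hookrightarrow \G$ that \cite{bowen-ornstein-2011} relies on in the case $\F_2 \leq \G$. Following the strategy of \cite{bowen-ornstein-2011}, one uses this $\F_2$-structure together with the Ornstein-Weiss entropy-increasing coding along $\F_2$ (and \cite{seward-ornstein}, or Stepin's theorem applied to $\F_2$, to furnish the required $\F_2$-Ornstein isomorphism) to produce $\F_2$-equivariant factors of $B$ onto Bernoulli $\F_2$-shifts of arbitrarily large Shannon entropy. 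Because the $\F_2$-subrelation was produced $\G$-equivariantly, these $\F_2$-codings can be aggregated into $\G$-equivariant factor maps from $B$ onto Bernoulli $\G$-shifts of any prescribed Shannon entropy. Choosing this entropy to be at least $H(\alpha)$ and applying \cite{seward-ornstein} one last time to match entropies yields the desired $\G$-factor $B \to (A,\alpha)^\G$.

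The main obstacle is the aggregation step, where an $\F_2$-equivariant Bernoulli factor is turned into a $\G$-equivariant Bernoulli factor. In the subgroup setting $\F_2 \leq \G$ of \cite{bowen-ornstein-2011}, this is handled by coinduction along a transversal of $\F_2$-cosets in $\G$. Here $\F_2$ acts only as a subequivalence relation of the $\G$-orbit equivalence relation, so no such global transversal exists; one must instead exploit the $\G$-equivariance of the random interlacement construction to transfer the $\F_2$-coding into a $\G$-equivariant one, via a measurable choice (along each $\G$-orbit of $B$) of how that orbit decomposes into $\F_2$-orbits. Once this step is in hand, the remainder of the argument is a routine composition of factor maps.
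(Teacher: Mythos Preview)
Your plan has a genuine gap at the step you already identify as the ``main obstacle,'' and the gap is more serious than you suggest. When $\F_2 \le \G$ is an honest subgroup, the restriction of $(K,\kappa)^\G$ to $\F_2$ is an $\F_2$-Bernoulli shift with base $(K,\kappa)^T$ for a coset transversal $T$; this is what makes the Ornstein coding in \cite{bowen-ornstein-2011} available and what makes coinduction produce a $\G$-Bernoulli target. In the measurable setting you are in, the $\F_2$-action supplied by Theorem~\ref{thm:main2} is merely an ergodic essentially free action whose orbits lie inside $\G$-orbits; there is no reason it is Bernoulli as an $\F_2$-system, so the ``$\F_2$-equivariant factors of $B$ onto Bernoulli $\F_2$-shifts of arbitrarily large Shannon entropy'' are not furnished by any cited result. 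And even granting those, the aggregation you sketch (``a measurable choice of how each $\G$-orbit decomposes into $\F_2$-orbits'') does not by itself produce a $\G$-Bernoulli target: there is no coinduction along a subequivalence relation. The $\G$-equivariance of the FRI construction is automatic for any factor and does not help here.

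The paper's proof sidesteps both issues cleanly, and you should do the same. First use Seward's isomorphism theorem \cite{seward-ornstein} to split $(K,\kappa)^\G \cong (K_1,\kappa_1)^\G \times (K_2,\kappa_2)^\G$ with both factors nontrivial. Apply Theorem~\ref{thm:main2} to the first factor to obtain a non-hyperfinite treeable subequivalence relation. Then invoke Lemma~\ref{lem:GL} (Lemma~6 of \cite{bowen-zero-entropy}) directly: it says precisely that an essentially free Bernoulli factor carrying such a subrelation, when multiplied by any Bernoulli shift, factors onto every Bernoulli shift. The role of the second factor $(K_2,\kappa_2)^\G$ is to supply the independent randomness that the $\F_2$-coding consumes; this is exactly what is missing from your sketch. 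The detour through Ball's theorem is then unnecessary.
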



Seward's Factor Theorem  \cite{seward-sinai-30} and Corollary \ref{thm:main4} imply:

\begin{cor}\label{thm:main5}
Let $\Ga$ be a countable non-amenable group and $\Ga \cc (X,\mu)$ a probability-measure-preserving, ergodic, essentially free action with positive Rokhlin entropy. Then this action factors onto all Bernoulli shifts over $\G$. 
\end{cor}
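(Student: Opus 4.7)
The plan is to obtain Corollary \ref{thm:main5} by composing two factor maps: one produced by Seward's generalization of Sinai's factor theorem, and one provided by Corollary \ref{thm:main4}.

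First I would invoke Seward's Factor Theorem \cite{seward-sinai-30}, which says that any ergodic, essentially free, p.m.p.\ action of a countable group with positive Rokhlin entropy factors onto \emph{some} nontrivial Bernoulli shift. Applied to the hypothesized action $\G \cc (X,\mu)$, this yields a $\G$-equivariant measurable, measure-preserving map
\[
 \Phi_0 : (X,\mu) \longrightarrow (K_0,\kappa_0)^{\G}
\]
for some nontrivial base $(K_0,\kappa_0)$. (One does not need to control which Bernoulli shift appears as the target; any one will do.)

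Next, given an arbitrary nontrivial Bernoulli shift $\G \cc (K,\kappa)^{\G}$, Corollary \ref{thm:main4} guarantees that the Bernoulli shifts $\G \cc (K_0,\kappa_0)^{\G}$ and $\G \cc (K,\kappa)^{\G}$ are weakly isomorphic; in particular there is a $\G$-equivariant factor map
\[
 \Phi_1 : (K_0,\kappa_0)^{\G} \longrightarrow (K,\kappa)^{\G}.
\]
Composing, $\Phi_1 \circ \Phi_0$ is a $\G$-equivariant factor map from $(X,\mu)$ onto $(K,\kappa)^{\G}$, which is exactly the conclusion.

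Since both ingredients are already in hand, there is essentially no obstacle; the only point requiring any care is verifying that Seward's theorem applies in the stated generality (countable, not necessarily amenable $\G$; ergodic, essentially free; positive Rokhlin entropy) and that the target Bernoulli shift it produces is nontrivial, so that Corollary \ref{thm:main4} can be invoked to re-target it onto the chosen $(K,\kappa)^{\G}$. Both are already guaranteed by the hypotheses and by \cite{seward-sinai-30}, so the corollary follows by composition.
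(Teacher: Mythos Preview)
Your proposal is correct and matches the paper's own argument exactly: the paper states that Corollary~\ref{thm:main5} is an immediate corollary of Theorem~\ref{thm:seward-sinai-30} and Corollary~\ref{thm:main4}, which is precisely the composition of factor maps you describe.
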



\subsection{A reduction}

Gaboriau-Lyons  \cite{gaboriau-lyons} uses the theory of cost to reduce their theorem to showing that a certain random subgraph of a Cayley graph of $\G$ (whose law is measurably conjugate to a Bernoulli shift) is such that it has infinitely many infinite connected components with infinitely many ends a.s. Moreover it has indistinguishable infinite clusters in the sense of Lyons-Schramm \cite{lyons-schramm-indistinguishability}. We will use the same reduction. However, since the random graph we use is obtained from a nontrivial factor of a Bernoulli shift, this only shows the weak von Neumann-Day property. To finish the proof, we use the Chifan-Ioana Theorem that Bernoulli shifts over non-amenable groups are solidly ergodic \cite{MR2647134}. 

\subsection{Finitary Random Interlacements} 

\subsubsection{The big picture}
Theorem \ref{thm:main2} is obtained by studying random multisets of finite random walks on a fixed Cayley graph of $\G$. These random multisets are called Finitary Random Interlacements (FRI). The union of the random walk paths in an FRI is a random subgraph that satisfies the aforementioned reduction when certain parameter bounds hold. Moreover, the action of $\G$ on the measure-space defining the $\FRI$ is measurably conjugate to a Bernoulli shift. 

The FRI is a variant of the random interlacement process (RI) which is a random multiset of bi-infinite random walk paths  introduced by Sznitman \cite{MR2680403}. The paper \cite{MR3076674} proves that, for non-amenable graphs, the RI at low intensity has infinitely many infinite components. It is unknown whether the RI at low-intensity is a factor of a low-entropy Bernoulli shift. If it were true, it could be used to give another proof of Theorem \ref{thm:main2}. It was this observation that led to the approach of this paper. Moreover, we use essentially the same proof as in \cite{MR3076674} to show that a low-intensity FRI has infinitely many infinite components whenever it has infinite components.

\subsubsection{An informal description}
The FRI is defined on a locally finite graph $G=(V,E)$. There are two parameters, $u,T>0$ called the {\bf intensity} and {\bf average stopping time}. To each vertex $x\in V$ is associated a Poisson random variable $\bfN_x$ with mean $u \deg_x/(T+1)$ where $\deg_x$ is the degree of $x$. Informally, we think of $\bfN_x$ as the number of frogs that live on vertex $x$ at time $0$. Each frog has a coin that lands on heads with probability $p=\frac{T}{T+1}$. At time 0, a frog flips her coin. If it lands on heads, the frog moves to a uniformly random neighboring vertex. It repeats this operation until the coin lands on tails at which point the frog stops forever. So each frog performs a simple random walk for $t$ steps where $t+1$ is a geometric random variable with mean $T+1$. The FRI is the random multiset of these finite random walk paths. 

It might help the reader's intuition to know that the expected number of walks of the $\FRI$ that traverse a fixed vertex $x$ is $u\deg_x P^{(T)}_x(\tH_x=\infty)$ where $ P^{(T)}_x(\tH_x=\infty)$ is the probability that the aforementioned geometrically-killed random walk started at $x$ does not return to $x$. In particular, if $G$ is vertex-transitive then this expected number of walks is proportional to $u$. 

To be more precise, let $\cW[0,\infty)$ denote the collection of all finite walks $w:D_w \to V$ where $D_w =[0, \textrm{len}(w)] \cap \N$ for some $\textrm{len}(w) \in \N$ called the length of $w$. By convention $\N=\{0,1,2,\ldots\}$. Then $\cW[0,\infty)$ is a countable set because $V$ is countable. Let $\Omega^{[0,\infty)}$ be the set of all locally finite $\N\cup\{\infty\}$-valued measures on $\cW[0,\infty)$. We can identify $\Omega^{[0,\infty)}$ with a subspace of the space of all functions from $\cW[0,\infty)$ to $\N$ endowed with the topology of pointwise convergence. Finally the law of the $\FRI$ is a probability measure $\P_{u,T}$ on $\Omega^{[0,\infty)}$. 

Given $\omega \in \Omega^{[0,\infty)}$, let $\cE_\o\subset E$ denote the collection of edges crossed by at least one walk in the support of $\o$. Recall that a {\bf cluster} is a connected component. Our main result is:

\begin{thm}\label{thm:main1}
Let $\G$ be a non-amenable countable group with a finite generating set $S \subset\G$.  Let $G=(V,E)$ denote the associated Cayley graph. Then:
\begin{enumerate}

\item for every $\eps>0$ there exists $u_0>0$ such that if $0<u<u_0$ and $T>0$ then the action $\G \cc (\Omega^{[0,\infty)}, \P_{u,T})$ is measurably conjugate to a Bernoulli shift over $\G$ with base entropy $<\eps$;

\item for every $u>0$ there exists $T_u$ such that if $T>T_u$ then for $\P_{u,T}$-almost every $\omega$, the subgraph $(V,\cE_\omega)$  has infinite clusters;

\item there exists $u_c>0$ such that if $0<u<u_c$ and $T>T_u$ then for  $\P_{u,T}$-almost every $\omega$, the subgraph $(V,\cE_\omega)$ has infinitely many infinite clusters;

\item for any $u,T>0$, if $(V,\cE_\omega)$ has infinitely many infinite clusters $\P_{u,T}$-a.s. then each infinite cluster has infinitely many ends $\P_{u,T}$-a.s. Moreover if $\bfom \sim \P_{u,T}$ then the random subgraph $(V,\cE_\bfom)$  has indistinguishable infinite clusters in the sense of Lyons-Schramm \cite{lyons-schramm-indistinguishability}. 

\end{enumerate}
\end{thm}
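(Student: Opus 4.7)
The proof naturally splits along the four items, and I outline each in turn; the main technical content is in Parts (2) and (3).

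\textbf{Part (1).} The plan is to realize $(\Omega^{[0,\infty)}, \P_{u,T})$ as a literal Bernoulli shift over $\G$. Given $\omega$, decompose $\omega = \sum_{g \in \G} \omega_g$ where $\omega_g$ is the sub-multiset of walks in $\omega$ starting at $g$; because the intensity measure of the $\FRI$ Poisson process decomposes as a sum over starting vertex, the collection $(\omega_g)_{g \in \G}$ is iid and the $\G$-shift equivariantly permutes its entries. Identifying $\omega_e$ with an element of the countable space $K$ of finite multisets of finite walks based at $e$, and letting $\kappa$ be its law, one obtains a measure-preserving isomorphism $(\Omega^{[0,\infty)}, \P_{u,T}, \G) \cong (K^\G, \kappa^\G, \G)$. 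Writing $\lambda = u|S|/(T+1)$, the Shannon entropy satisfies
\[
H(\kappa) \leq H(\Poisson(\lambda)) + \lambda H_W(T),
\]
where $H_W(T) = O(T \log |S|)$ is the entropy of one geometrically-killed walk starting at $e$. Since $\lambda \leq u|S|$ for every $T > 0$ and $\lambda H_W(T) = O(u|S|\log|S|)$ uniformly in $T$, one chooses $u_0$ small so that $H(\kappa) < \eps$ for all $T > 0$ and $u < u_0$.

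\textbf{Parts (2) and (3).} For Part (2), the plan is to exploit that as $T \to \infty$ the geometrically-killed walk converges locally to a bi-infinite simple random walk; since $\G$ is non-amenable the latter is transient with linearly growing range, so a coupling with the Sznitman random interlacement (RI) at intensity $u$ (in which each walk is bi-infinite and hence automatically an infinite connected set) should transfer the existence of infinite clusters to $\FRI_{u,T}$ for $T$ sufficiently large. For Part (3), the approach is to adapt the argument of \cite{MR3076674}: by ergodicity the number of infinite clusters is a.s.\ constant in $\{0, 1, \infty\}$, Part (2) rules out $0$, and the argument of \cite{MR3076674} rules out $1$ at low intensity by combining non-amenability of $\G$ with an isoperimetric estimate on the occupied-walk density. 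The main obstacle of the theorem is that the estimate of \cite{MR3076674} uses bi-infinite walks; to transfer it to $\FRI$ one must verify that geometrically-killed walks of mean length $T > T_u$ still cross the relevant cutsets with essentially the same probability as bi-infinite walks do, a quantitative ``finite-range'' replacement that should go through for $T$ large relative to the scale of the cutsets.

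\textbf{Part (4).} The plan is to verify insertion-tolerance of $\FRI$ and invoke standard theorems on invariant percolations on non-amenable Cayley graphs. Since $\P_{u,T}$ is the law of a Poisson point process on $\cW[0,\infty)$, the distribution obtained by inserting one extra walk through a designated edge $e$ is absolutely continuous with respect to $\P_{u,T}$; hence $(V, \cE_\o)$ is insertion-tolerant in the sense of Lyons-Schramm \cite{lyons-schramm-indistinguishability}, and their theorem yields indistinguishability of the infinite clusters. The statement that each infinite cluster has infinitely many ends then follows from the Benjamini-Lyons-Peres-Schramm theorem that, for an insertion-tolerant $\G$-invariant percolation on a non-amenable Cayley graph with infinitely many infinite clusters, each cluster a.s.\ has infinitely many ends.
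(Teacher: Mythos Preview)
Your treatment of Parts~(1) and~(4) matches the paper's: the Bernoulli decomposition by starting vertex and the entropy bound are exactly what the paper does, and for Part~(4) insertion tolerance plus Lyons--Schramm is precisely the argument. The difficulty is in Parts~(2) and~(3), where your outline diverges from the paper and, in Part~(2), has a genuine gap.

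\medskip
\textbf{Part (2).} The proposal to ``couple with RI and transfer infinite clusters'' is not a workable strategy as stated. Weak convergence of $\pi_*\P_{u,T}$ to the RI law as $T\to\infty$ (which the paper establishes in an appendix, explicitly noting that it is \emph{not} used for any other result) does not produce such a coupling: the event that the origin lies in an infinite cluster is neither open nor closed in the relevant topology, and in the FRI every walk is finite, so nothing in the limit statement tells you that finitely many finite walks glue together into an infinite component for any fixed large $T$. You would need a quantitative, monotone coupling in which long FRI walks sit inside RI trajectories \emph{and} in which the connectivity of the RI picture is inherited at finite $T$; no such coupling is offered, and constructing one is essentially as hard as the original problem. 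The paper's proof is entirely different and self-contained. It expresses the cluster at the origin as an increasing growth process $(\cV(\bfom^x_n))_{n}$ using the local picture (Proposition~\ref{prop:KL}, Corollary~\ref{cor:KL}), introduces a truncated one-step growth $\bfom'$, and proves a second-moment estimate (Proposition~\ref{prop:chevy}): combining the non-amenability bound $\sum_{x\in L}P_x(\tH_L=\infty)\ge(1-\rho)\#L$ with a speed estimate for the killed walk (Lemma~\ref{lem:speed}) gives $\E[\#\cV(\bfom')]\ge \d_2 uT\#L$, while a crude locality argument gives $\Var[\#\cV(\bfom')]\le D^{3T}\#L$. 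Chebyshev then shows the growth process expands by a fixed factor at each stage with high probability once it is large, and a product estimate gives $\P_{u,T}(\bigcap_n E_n)>0$ for $T$ chosen large (first for small $u$, then for all $u$ by monotonicity of $\P_{u,T}$ in $u$). No reference to RI enters.

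\medskip
\textbf{Part (3).} Your description of the mechanism in \cite{MR3076674} as an ``isoperimetric estimate on the occupied-walk density,'' and your plan to carry it over by a ``quantitative finite-range replacement'' for cutset crossings, misses the actual idea. Both \cite{MR3076674} and the paper proceed by stochastically dominating the cluster at a fixed vertex by a branching random walk. Concretely, the paper builds a three-type (red/green/blue) BRW whose trace contains $\cE(\bfom^x_\infty)$ (Lemma~\ref{lem:sd1}) and whose underlying single-type BRW has offspring mean $1+2uD$ (Lemma~\ref{lem:brw2}); for $1+2uD<\rho^{-1}$ this BRW is transient by \cite{MR2284404}, and then the Benjamini--Schramm argument from \cite{MR2883387} rules out a unique infinite cluster. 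There is no cutset analysis, and the finiteness of the FRI walks plays no role in this step --- the BRW domination holds uniformly in $T$.
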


\begin{question}
Does item (2) hold when $\G$ is amenable and simple random walk on its Cayley graph is transient? In particular, does it hold on $\Z^d$?
\end{question}

\begin{remark}
Parts (2) and (3) of the theorem can be extended to all graphs $G$ that are bounded degree, connected and non-amenable. The conclusion of item (4) holds a.s. if $G$ is a unimodular network. For clarity's sake, the theorem is proven first for Cayley graphs, and then it is explained how minor changes give this generalization.
\end{remark}

\subsubsection{A brief sketch}

Theorem \ref{thm:main1} is proven in several steps. The first statement  follows from direct entropy computation. To prove the existence of infinite clusters, it is shown that the cluster containing the origin stochastically dominates a certain growth process which is analyzed with a second moment argument. To prove that infinite clusters and low-intensity implies the existence of  infinitely many infinite components, it is shown that the subgraph of $(V,\cE_\o)$ containing the origin is stochastically dominated by a branching random walk with $1+O(u)$ mean offspring distribution. This is similar to the way Benjamini-Schramm obtained bounds on  the non-uniqueness phase in Bernoulli percolation \cite{MR2883387} and the way Teixeira-Tykesson proved that low-intensity Random Interlacements have infinitely many components in non-amenable graphs \cite{MR3076674}. 

Lastly, it is observed if $\bfom \sim \P_{u,T}$ then the random subgraph $(V,\cE_\bfom)$ is insertion-tolerant. So by \cite{lyons-schramm-indistinguishability}, whenever $G$ is vertex-transitive and $(V,\cE_\bfom)$ has infinitely many infinite clusters, the infinite clusters are indistinguishable and have infinitely many ends a.s.


\subsection{Connections with Random Interlacements}

Random Interlacements (RI) are random multisets of bi-infinite random walk paths on a locally finite graph $G=(V,E)$. They were introduced by Sznitman \cite{MR2680403} where the focus is on the connectedness properties of the complement in the case when $G$ is a cubic lattice. See also \cite{MR3308116} for a general introduction and \cite{MR2525105} for a definition of RI on general weighted transient graphs. 

The `local picture' of Finitary Random Interlacements is similar to standard constructions of Random Interlacements, which I learned from \cite{MR3076674}. In the appendix it is shown that the FRI process  converges in distribution to the RI as $T \to\infty$. This result is not needed for any of the other results; we have included it only to justify the name FRI.  



\subsection{Related Literature}

There is an excellent Bourbaki article on the Gaboriau-Lyons Theorem and its applications to measurable group theory \cite{MR3051202}.

As far as I am aware, before the Gaboriau-Lyons Theorem there was only one technique for proving the existence of a free subgroup (measurable or otherwise): the ping-pong lemma used in the proof of the Tit's Alternative \cite{tits}. That idea was used in \cite{bowen-hyperbolic} to prove that measured equivalence relations that act properly on a bundle of hyperbolic spaces are von Neumann-Day.

The Gaboriau-Lyons Theorem was extended in \cite{MR3813200} to class-bijective extensions of measured equivalence relations. The techniques of that paper can be combined with this article to strengthen the main result of \cite{MR3813200} so that it holds for all Bernoulli shifts. 

In \cite{kun-2013}, Gabor Kun obtains a Lipschitz version of the Gaboriau-Lyons Theorem. I do not know if there exists a Lipschitz version of Theorem \ref{thm:main2}. 

{\bf Acknowledgements}. I am deeply grateful to Itai Benjamini for a series of email conversations on random interlacements which initiated this approach. Also thanks to the authors of \cite{MR3076674} from which I learned most of what I know about random interlacements. Thanks also to the anonymous referee for suggestions that greatly simplified several proofs.

\section{Notation} 

Throughout this paper, $\N=\{0,1,2,\ldots\}$. The notation $A \Subset B$ means that $A$ is a finite subset of $B$. 

We use boldface to denote random variables. For example, suppose $\P$ is a probability measure on a space $\Omega$. Then $\omega \in \Omega$ denotes an element of $\Omega$ while $\bfom \sim \P$ denotes a random variable taking values in $\Omega$ with law $\P$.   In other words, for any measurable event $E \subset \Omega$, $\P(\bfom \in E)=\P(E)$. Also $\bfN \sim \Poisson(m)$ indicates that $\bfN$ is a Poisson random variable with mean $m$ and $\bfN+1 \sim \Geom(T)$ indicates that $\bfN+1$ is a geometric random variable with expected value $T$. Finally, $\bfN_1 \sim \bfN_2$ means that $\bfN_1$ and $\bfN_2$ are identically distributed random variables. If $\omega \in \Omega$ then we may write $\P(\omega)$ instead of $\P(\{\omega\})$ for simplicity.



For any measure $\k$ on a set $X$ and any measurable map $\Phi:X \to Y$ the {\bf pushforward measure} $\Phi_*\k$ on $Y$ is defined by $\Phi_*\k = \k \circ \Phi^{-1}$. 

Let $(X,\cB,\mu)$ be a Borel space with a sigma-finite measure $\mu$. The {\bf Poisson point process} with intensity measure $\mu$ is a random $\N \cup\{\infty\}$-valued measure $\bfom$ on $X$ satisfying
\begin{itemize}
\item if $A_1,A_2,\ldots \subset X$ are disjoint measurable sets then the restrictions $\{\bfom \resto A_i\}_i$ are jointly independent;
\item for any measurable $A \subset X$ with $\mu(A)<\infty$, $\bfom(A)$ is a Poisson random variable with mean $\mu(A)$.
\end{itemize}
Thus if $\Omega$ is the set of all $\N \cup \{\infty\}$-valued measures on $X$ endowed with the sigma-algebra generated by the functions $\omega \mapsto \omega(A)$ (for $A \subset X$ measurable) then the law of  the Poisson point process $\bfom$ is the unique probability measure $\P$ on $\Omega$ satisfying
\begin{enumerate}
\item if $A_1,A_2,\ldots \subset X$ are disjoint measurable sets and $\cA_i$ is the sigma-algebra on $\Omega$ defined by restricting to $A_i$ then the sigma-algebras $\cA_1,\cA_2,\ldots$ are jointly independent with respect to $\P$,
\item for any measurable $A\subset X$ with $\mu(A)<\infty$ and $n \in \N$,
$$\P(\{\omega \in \Omega:~\omega(A)=n\}) = \exp(-\mu(A)) \frac{ \mu(A)^n}{n!}.$$
\end{enumerate}

\section{Finitary random interlacements}\label{sec:second}

We will use notation as in \cite[\S 3.2]{MR3773383}. Let $G=(V,E)$ be a locally finite connected graph on countably many vertices. For each $-\infty\le m \le n \le \infty$, let $L(m,n)$ be the graph with vertex set $\{i \in \Z:~ m \le i \le n\}$ and edge set $\{(i, i + 1) : m \le i \le n-1\}$. Let $\cW[m, n]$ be the set of multigraph homomorphisms $w:L(m, n) \to G$ such that the pre-image of each vertex in $G$ is finite, and let $\cW$, $\cW[0,\infty)$ be the unions
\begin{eqnarray*}
\cW&=&\bigcup_{-\infty\le m \le n \le \infty} \cW[m,n]\\
\cW[0,\infty)&=&\bigcup_{0\le n < \infty} \cW[0,n].
\end{eqnarray*}
For $w \in \cW[m,n]$, let $\textrm{len}(w)=n-m$ denote its length and $D_w = \{m,\ldots, n\}$ its domain.


 \begin{defn}
 For $x \in V$ and $0\le n \le \infty$, let $P^n_x$ be the law of the simple random walk started from $x$ and stopped at time $n$. More precisely, $P^n_x$ is the Borel probability measure on $\cW$ satisfying
 \begin{displaymath}
 P^n_x(w) = \left\{ \begin{array}{cl}
 \prod_{k=0}^{n-1} \deg_{w(k)}^{-1} & \textrm{if } w\in \cW[0,n] \textrm{ and } w(0)=x\\
 0 & \textrm{ otherwise} \end{array} \right.\end{displaymath}
For simplicity, we write $P_x$ for $P^\infty_x$.   Let
$$P^{(T)}_x = \left(\frac{1}{T+1} \right)\sum_{n \in \N} \left(\frac{T}{T+1} \right)^n P^n_x$$
denote the geometrization of the measures $P^n_x$. Thus $P^{(T)}_x$ is the law of the simple random walk started from $x$ with random stopping time $\sim \Geom(T+1)-1$. 
 \end{defn}
 
 A {\bf counting measure} is a measure taking values in $\N \cup \{\infty\}$. Given  $K \subset V$, let $\cW_K$ be the set of all walks $w\in \cW$ such that $w(i) \in K$ for some $i$.  A measure $\omega$ on $\cW$ is {\bf locally finite} if $\omega(W_K) <\infty$ for every finite $K \subset V$. 
 
 Since $\cW[0,\infty)$ is countable, we give it the $\s$-algebra of all subsets.  Let $\Omega^{[0,\infty)}$ denote the space of all locally finite counting measures $\omega$ on $\cW[0,\infty)$. We can think of $\Omega^{[0,\infty)}$ as a subset of the set of all functions from $\cW[0,\infty)$ to $\N$ (which is denoted by $\N^{\cW[0,\infty)}$). We give  $\N^{\cW[0,\infty)}$ the topology of pointwise convergence under which it is a Polish space. Because $\Omega^{[0,\infty)}$ is a $G_\delta$ subset of $\N^{\cW[0,\infty)}$, $\Omega^{[0,\infty)}$ is also Polish.

\begin{defn}[Finitary random interlacements]
For $0< T < \infty$, let $\nu^{(T)}$ be the measure on $\cW$ defined by
$$\nu^{(T)} = \sum_{x \in V} \frac{\deg_x}{T+1} P^{(T)}_x.$$
This measure is infinite if $V$ is infinite. For $0<u,T<\infty$, the {\bf finitary random interlacement}, typically denoted by $\bfom$, is the Poisson point process on $\cW$ with intensity measure $u \nu^{(T)}$. The law of $\bfom$ is a probability measure on $\Omega^{[0,\infty)}$ denoted by $\P_{u,T}$. To be precise, this means that for any $w_1,\ldots, w_k \in \cW[0,\infty)$ and natural numbers $n_1,\ldots, n_k$,
\begin{eqnarray*}
\P_{u,T}( \{ \omega \in \Omega^{[0,\infty)}:~ \omega(w_i) = n_i ~\forall 1\le i \le k\}) &=& \P_{u,T}( \bfom(w_i) = n_i ~\forall 1\le i \le k) \\
&=& \prod_{i=1}^k \exp(-u\nu^{(T)}(w_i)) \frac{(u\nu^{(T)}(w_i))^{n_i}}{n_i!}.
\end{eqnarray*}
In other words, $\bfom$ is a random element of $\Omega^{[0,\infty)}$ whose law is uniquely determined by:
\begin{enumerate}
\item the family of random variables $\{\bfom(w) : w\in \cW[0,\infty)\}$ is jointly independent;
\item for each $w\in \cW[0,\infty)$, $\bfom(w)$ is a Poisson random variable with mean $u \nu^{(T)}(w)$. 
\end{enumerate}

\end{defn}


\subsection{Bernoulli shifts}

This section proves part (1) of Theorem \ref{thm:main1}. So let $\G$ be a countable group with a finite symmetric generating set $S$ and let $G=(V,E)$ be the associated Cayley graph. To be precise, $V=\G$ and $E$ is the set of all unordered pairs $\{g,gs\}$ for $g\in \G, s\in S$. Because $\G$ acts on $V$ by left-translation, $\G$ also acts on $\cW[0,\infty)$ by
$$(gw)(n) = gw(n)\quad \forall g\in \G, w\in \cW[0,\infty), n \in D_w$$
and $\G$ acts on $\Omega^{[0,\infty)}$ by $(g\omega)(E) = \omega(g^{-1}E)$. The group $\G$ is not required to be non-amenable for the next result.

\begin{thm}\label{thm:main1.1}
With notation as above, for every $\eps>0$ there exists $u_\eps>0$ such that if $u<u_\eps$ then $\G \cc (\Omega^{[0,\infty)}, \P_{u,T})$ is measurably conjugate to a Bernoulli shift with base entropy at most $\eps$. 
\end{thm}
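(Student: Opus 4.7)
The plan is to identify $(\Omega^{[0,\infty)}, \P_{u,T})$ explicitly with a Bernoulli shift by partitioning walks according to their starting vertex. Each walk $w \in \cW[0,\infty)$ has a well-defined initial point $w(0) \in V = \G$, so the sets $\cW_g := \{w \in \cW[0,\infty):\, w(0)=g\}$, $g \in \G$, form a disjoint decomposition of $\cW[0,\infty)$, yielding a product decomposition $\Omega^{[0,\infty)} = \prod_{g \in \G} \Omega_g$ where $\Omega_g$ is the space of counting measures on $\cW_g$. Left translation $\phi_g(w) := g^{-1}w$ is a bijection $\cW_g \to \cW_e$, and I would use these bijections to identify $\Omega^{[0,\infty)}$ with $\Omega_e^\G$. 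Since $w \in \cW_h$ forces $gw \in \cW_{gh}$, a direct check shows the $\G$-action on $\Omega^{[0,\infty)}$ corresponds exactly to the Bernoulli shift action on $\Omega_e^\G$.

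Next I would show that under this identification $\P_{u,T}$ becomes a product measure $\QQ^\G$ for a single base measure $\QQ$ on $\Omega_e$. This is immediate from the defining properties of a Poisson point process: the restrictions of $\bfom \sim \P_{u,T}$ to the disjoint sets $\cW_g$ are jointly independent, and the restriction to $\cW_g$ is Poisson with intensity $u\cdot\tfrac{\deg_g}{T+1}P^{(T)}_g = \tfrac{u|S|}{T+1}P^{(T)}_g$ (using vertex-transitivity $\deg_g = |S|$). Left-invariance of simple random walk gives $(\phi_g)_* P^{(T)}_g = P^{(T)}_e$, so each factor pushes forward to the same measure $\QQ$: namely the Poisson point process on $\cW_e$ with intensity $\mu := \tfrac{u|S|}{T+1}P^{(T)}_e$. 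Hence $\G \cc (\Omega^{[0,\infty)}, \P_{u,T})$ is measurably conjugate to the Bernoulli shift over $\G$ with base $(\Omega_e,\QQ)$.

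The final step is to bound the Shannon entropy $H(\QQ) < \eps$ for sufficiently small $u$, uniformly in $T > 0$. Because $\mu$ is a finite measure on the countable set $\cW_e$, $\bfom \sim \QQ$ is almost surely a finite multiset of walks, so $\QQ$ is supported on a countable set and $H(\QQ)$ is well-defined. I would sample $\QQ$ in two stages: first the total count $\bfN \sim \Poisson(\lambda)$ with $\lambda := \mu(\cW_e) = \tfrac{u|S|}{T+1}$, then $\bfN$ i.i.d.\ walks drawn from $\tilde P := \mu/\lambda$, which has geometric length of mean $T$ and uniform generators. Since $\bfom$ is a deterministic (symmetric) function of $(\bfN, w_1,\ldots,w_{\bfN})$, this gives
$$H(\QQ) \le H(\Poisson(\lambda)) + \lambda\, H(\tilde P).$$
An elementary calculation yields $H(\tilde P) \le \log(T+1) + T\log|S| + O(1)$, so
$$\lambda H(\tilde P) \le u|S|\bigl[\tfrac{\log(T+1)}{T+1} + \log|S| + O(\tfrac{1}{T+1})\bigr] \le C(|S|)\,u,$$
while $H(\Poisson(\lambda)) \to 0$ as $u\to 0$ since $\lambda \le u|S|$. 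Together these give $\sup_{T>0} H(\QQ) \to 0$ as $u \to 0$, completing the proof.

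The only (minor) obstacle is the uniformity of the entropy estimate in $T$: for large $T$ the per-vertex intensity $\lambda$ is small but each walk carries entropy of order $T\log|S|$, while for small $T$ the opposite occurs. These effects cancel through the bounded factor $\tfrac{T}{T+1}$ multiplying $\log|S|$ together with the bounded factor $\tfrac{\log(T+1)}{T+1}$ in $\lambda H(\tilde P)$, which stay bounded on all of $T > 0$ and produce an estimate depending only on $u$ and $|S|$.
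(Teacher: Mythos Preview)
Your proposal is correct and follows essentially the same route as the paper: decompose $\cW[0,\infty)$ by initial vertex, use left translation to identify $(\Omega^{[0,\infty)},\P_{u,T})$ equivariantly with the Bernoulli shift over the base $(\Omega_e,\QQ)$ where $\QQ$ is the Poisson point process with intensity $\tfrac{u|S|}{T+1}P^{(T)}_e$, and then bound $H(\QQ)$ by $H(\Poisson(\lambda))+\lambda H(P^{(T)}_e)$ via the geometric length plus uniform-step structure. The only cosmetic difference is that the paper bounds the length entropy by $(T+1)\log 2$ whereas you use the sharper $\log(T+1)+O(1)$; both give $H(\QQ)\le C(|S|)\,u + o_u(1)$ uniformly in $T$.
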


\begin{proof}
Let $\cW_g[0,\infty) \subset \cW[0,\infty)$ be the set of all finite walks on $G$ that start at $g \in V=\G$. Let $\Omega_g^{[0,\infty)}$ be the set of all $\omega \in \Omega^{[0,\infty)}$ such that $\omega$ is finite and supported on $\cW_g[0,\infty)$. Both $\cW_g[0,\infty)$ and $\Omega_g^{[0,\infty)}$ are countable sets. 

 Let $e \in \G$ denote the identity element. Define a probability measure on $\Omega_e^{[0,\infty)}$ as follows. First let $\bfN$ be a Poisson random variable with mean $u|S|/(T+1)$. For each $1\le i \le \bfN$, let $\bfw_i \in \cW_e[0,\infty)$ be a random walk with law $P^{(T)}_{e}$. Now let $\bfom = \sum_{i=1}^\bfN \delta_{\bfw_i}$.  Let $\eta$ be the law of $\bfom$. So $\eta$ is a probability measure on $\Omega_e^{[0,\infty)}$. 

For $\omega \in \Omega_e^{[0,\infty)}$ and $g\in \G$, define 
$$g \ast \omega \in \Omega_g^{[0,\infty)} \textrm{ by } g\ast \omega(\{w\}) = \omega( \{g^{-1}w\}).$$

We claim that $\G \cc (\Omega^{[0,\infty)}, \P_{u,T})$ is measurably conjugate to the Bernoulli shift $\G \cc (\Omega_e^{[0,\infty)}, \eta)^\G$. To see this, denote an arbitrary element $\xi \in (\Omega_e^{[0,\infty)})^\G$ by $\xi=(\xi_g)_{g\in \G}$ and define 
$$\Phi:(\Omega_e^{[0,\infty)})^\G \to \Omega^{[0,\infty)} \textrm{ by } \Phi(\xi) = \sum_{g\in \G} g \ast \xi_g.$$
To check that $\Phi$ is $\G$-equivariant, let $h \in \G$ and $w \in \cW_g[0,\infty)$. Then
\begin{eqnarray*}
\Phi(h\xi)(\{w\}) &=& g \ast (h\xi)_g(\{w\}) = (h\xi)_g(\{g^{-1}w\}) = \xi_{h^{-1}g}(\{g^{-1}w\}) = \xi_{h^{-1}g}(\{g^{-1}hh^{-1}w\}) \\
&=& (h^{-1}g)\ast \xi_{h^{-1}g}(\{h^{-1}w\}) = \Phi(\xi)(\{h^{-1}w\}) = h\Phi(\xi)(\{w\}).
\end{eqnarray*}
Moreover $\Phi$ is invertible with inverse defined by
$$\Phi^{-1}(\omega)_g(\{w\})= \omega(\{gw\})\quad \forall g\in \G, w \in \cW_e[0,\infty).$$
By construction of $\eta$, $\Phi_*\eta^\G = \P_{u,T}$. This proves the claim.

Next we turn towards the entropy computation. If $\bfX$ is any random variable taking values in a countable set $A$ then the {\bf Shannon entropy} of $\bfX$ is
$$H(\bfX):=\sum_{a\in A} -\Prob(\bfX=a) \log (\Prob(\bfX =a)).$$
If $\bfY$ is another random variable taking values in a countable set $B$ then $H(\bfX,\bfY)$ is the Shannon entropy of the join $\bfX \vee \bfY$ which takes values in $A\times B$. We will assume standard facts about entropy and relative entropy that can be found in \cite{MR2239987} for example. We let $\E$ denote expected value. 

Let $\bfN, \bfw_1,\ldots, \bfw_\bfN$ be as above. Let $\bfL_i=\textrm{len}(\bfw_i)$ be the length of $\bfw_i$. Then 
\begin{eqnarray*}
H(\eta) &\le& H(\bfN,\bfL_1,\ldots, \bfL_\bfN, \bfw_1,\ldots, \bfw_\bfN)\\
 &=& H(\bfN) + H(\bfL_1, \ldots, \bfL_\bfN|\bfN)  + H(\bfw_1,\ldots, \bfw_{\bfN}| \bfN, \bfL_1,\ldots, \bfL_\bfN).
\end{eqnarray*}
Next we estimate each of the three terms on the right hand side.

Because $\bfN$ is a Poisson random variable with mean $u|S|/(T+1)=O(u)$, $H(\bfN) = O(u)$. Because the $\bfL_i$'s are i.i.d. and independent of $\bfN$, 
$$H(\bfL_1, \ldots, \bfL_\bfN|\bfN)  = \E[\bfN] H(\bfL_1) = \frac{u|S|}{T+1} H(\bfL_1).$$
Since $\bfL_1+1$ is a geometric random variable with mean $T+1$,
$$H(\bfL_1)  = (T+1)H\left(\frac{1}{T+1}, \frac{T}{T+1}\right) \le \log(2)(T+1)$$
(where $H(x,1-x) := -x \log x - (1-x)\log(1-x)$). Thus
$$H(\bfL_1, \ldots, \bfL_\bfN|\bfN)  =O(u).$$
Because the $\bfw_i$'s are jointly independent and $\bfw_i$ depends only on $\bfL_i$,
$$H(\bfw_1,\ldots, \bfw_{\bfN}| \bfN, \bfL_1,\ldots, \bfL_\bfN)  = \E[\bfN] H(\bfw_1|\bfL_1) = \frac{u|S|}{T+1} H(\bfw_1|\bfL_1).$$
If $\bfL_1=k$ then $\bfw_1$ is uniformly distributed on a set of cardinality $|S|^k$. So $H(\bfw_1|\bfL_1) = \E[\bfL_1] \log |S| = T \log |S|$. So
$$H(\bfw_1,\ldots, \bfw_{\bfN}| \bfN, \bfL_1,\ldots, \bfL_\bfN)  = O(u).$$
The previous three estimates imply $H(\eta)=O(u)$ as required.\end{proof}

\section{The local picture}\label{sec:local}

Let $K \Subset L \Subset V$ be finite subsets of $V$. Let $\cW_K \subset \cW$ be the set of all walks $w$ that visit $K$ at least once (i.e.,  $w(i) \in K$ for some $i$). Define $\cW_L$ similarly. The goal of the next result is to describe the distribution of $\bfom \resto (\cW_L \setminus \cW_K)$ where $\bfom \sim \P_{u,T}$. This will be used later to describe the cluster at the origin by a growth process.

\begin{defn}
Define the $K$-hitting times for $w \in \cW[m,n]$ by
$$H_K(w) = \inf \{ t \in \N:~ w(t) \in K\} \in \N \cup \{\infty\}$$
 $$\tH_K(w) = \inf \{ t \ge 1:~ w(t) \in K\} \in \{1,2,\ldots \} \cup \{\infty\}.$$
By convention the infimum of the empty set is $+\infty$. If $K=\{x\}$ is a singleton then write $H_x(\cdot)$ and $\tH_x(\cdot)$ instead of $H_{\{x\}}(\cdot), \tH_{\{x\}}(\cdot)$ for simplicity.
\end{defn}

Let $\cW^{(2)}$ be the set of all pairs of walks $(a,b) \in \cW[0,\infty) \times \cW[0,\infty)$ such that $a(0)=b(0)$. For $x\in L \setminus K$, let $\zeta^{(T)}_x$ be the measure on $\cW[0,\infty)_x \times \cW[0,\infty)_x \subset \cW^{(2)}$ given by
$$\zeta^{(T)}_x(\{(a,b)\}) = 1_{\tH_L(a) = \infty} P^{(T)}_x(\{a\}) \deg_x 1_{H_K(b) = \infty} P^{(T)}_x(\{b\}).$$
Define a measure $Q_{L,K}^{(T)}$ on $\cW^{(2)}$ by
$$Q_{L,K}^{(T)}= \sum_{x\in L\setminus K} \zeta^{(T)}_x.$$
 The {\bf concatenation map} is defined by
$$\textrm{Con}: \cW^{(2)} \to \cW[0,\infty), \quad \textrm{Con}(a,b) = (a(\textrm{len}(a)), a(\textrm{len}(a)-1), \ldots, a(0), b(1),\ldots, b(\textrm{len}(b))).$$
For example, this means that $\textrm{Con}(a,b)(0)=a(\textrm{len}(a))$. 

\begin{prop}\label{prop:KL}
For $0<u,T<\infty$, let $\bfom \sim \P_{u,T}$ and $K \Subset L \Subset V$ be finite sets. Then the restriction $\bfom \resto (\cW_L \setminus \cW_K)$ is a Poisson point process with intensity measure $u\textrm{Con}_*Q_{L,K}^{(T)}$.
\end{prop}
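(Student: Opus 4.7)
The plan is to reduce the proposition to the measure identity
\[
\nu^{(T)} \resto (\cW_L \setminus \cW_K) \;=\; \textrm{Con}_*\,Q_{L,K}^{(T)}.
\]
This suffices because restricting a Poisson point process with intensity $\mu$ to a measurable set $A$ yields a Poisson point process with intensity $\mu \resto A$; applied to $\bfom$ with $\mu = u\nu^{(T)}$ and $A = \cW_L\setminus\cW_K$, this makes $\bfom \resto (\cW_L\setminus\cW_K)$ a Poisson point process with intensity $u\nu^{(T)}\resto(\cW_L\setminus\cW_K)$, which the identity rewrites as $u\,\textrm{Con}_*Q_{L,K}^{(T)}$.

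First I would establish that $\textrm{Con}$ is a bijection $\bigsqcup_{x\in L\setminus K}\supp(\zeta_x^{(T)}) \to \cW_L\setminus\cW_K$. Given $w\in\cW_L\setminus\cW_K$, let $t_0=H_L(w)$; this is finite and $x:=w(t_0)\in L\setminus K$ because $w$ avoids $K$. Set $a(i)=w(t_0-i)$ for $0\le i\le t_0$ and $b(j)=w(t_0+j)$ for $0\le j\le \textrm{len}(w)-t_0$. Then $a(0)=b(0)=x$, $\tH_L(a)=\infty$ since $w$ avoids $L$ strictly before time $t_0$, and $H_K(b)=\infty$ since $w$ avoids $K$ entirely; hence $(a,b)\in\supp(\zeta_x^{(T)})$ and $\textrm{Con}(a,b)=w$. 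Conversely, any $(a,b)\in\supp(\zeta_x^{(T)})$ gives a walk $\textrm{Con}(a,b)$ that first meets $L$ at time $\textrm{len}(a)$ at the point $x\in L\setminus K$ and thereafter avoids $K$; so the two constructions are mutual inverses.

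Next I would verify the pointwise mass identity $\zeta_x^{(T)}(\{(a,b)\})=\nu^{(T)}(\{w\})$ whenever $w=\textrm{Con}(a,b)$. Writing $n=\textrm{len}(w)$, only $x=w(0)$ contributes to $\nu^{(T)}(\{w\})$, and a direct computation gives
\[
\nu^{(T)}(\{w\}) \;=\; \frac{\deg_{w(0)}}{T+1}\,P_{w(0)}^{(T)}(\{w\}) \;=\; \frac{1}{(T+1)^2}\Bigl(\tfrac{T}{T+1}\Bigr)^{\!n}\prod_{k=1}^{n-1}\deg_{w(k)}^{-1}.
\]
On the other side, the vertices visited by $a$ after time $0$ are $w(t_0-1),\ldots,w(1)$ and those visited by $b$ are $w(t_0),\ldots,w(n-1)$, so the product of the $P_x^{(T)}$ densities collects each $\deg_{w(k)}^{-1}$ for $1\le k\le n-1$ exactly once except for $\deg_x^{-1}=\deg_{w(t_0)}^{-1}$, which appears twice; the explicit factor $\deg_x$ in $\zeta_x^{(T)}(\{(a,b)\})=P_x^{(T)}(\{a\})\,\deg_x\,P_x^{(T)}(\{b\})$ cancels this excess and yields the same formula. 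Summing over $x\in L\setminus K$ (only $x=w(H_L(w))$ contributes under the bijection) completes the identity.

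The main (and essentially only) obstacle is this degree bookkeeping, which is a reversibility check: the factor $\deg_x$ in both $\nu^{(T)}$ and $\zeta_x^{(T)}$ is exactly what is needed to make the first-hitting-time decomposition, with one leg reversed, compatible with the simple-random-walk laws.
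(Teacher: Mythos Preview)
Your proposal is correct and follows essentially the same approach as the paper: both reduce to the identity $\nu^{(T)}\resto(\cW_L\setminus\cW_K)=\textrm{Con}_*Q_{L,K}^{(T)}$, establish the first-hitting-time decomposition as the unique preimage under $\textrm{Con}$, and verify the pointwise mass equality by the same degree-cancellation computation. One small notational slip: when you write ``only $x=w(0)$ contributes'' you are overloading the symbol $x$ (previously $x=w(t_0)$), and the phrase ``vertices visited by $a$ after time $0$'' should really refer to the indices $0,\ldots,\textrm{len}(a)-1$ appearing in the $P_x^{(T)}$-density, but your conclusion is correct.
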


\begin{proof}
By definition, the restriction $\bfom \resto (\cW_L \setminus \cW_K)$ is a Poisson point process with intensity measure equal to the restriction of $u \nu^{(T)}$ to $\cW_L\setminus \cW_K$. So it suffices to prove that if $w \in \cW_L \setminus \cW_K$ then $\nu^{(T)}(\{w\}) = \textrm{Con}_*Q_{L,K}^{(T)}(\{w\})$. Let $t\ge 0$ be the smallest number such that $w(t)\in L$. Define walks $a,b$ by
$$a = (w(t), w(t-1),\ldots, w(0)), \quad b=(w(t),w(t+1),\ldots, w(\textrm{len}(w))).$$
Then $(a,b) \in \cW^{(2)}$ is the unique pair such that $\textrm{Con}(a,b)=w$ and $Q_{L,K}^{(T)}(\{(a,b)\}) >0$. Indeed, if $(a',b') \in \cW^{(2)}_{<\infty}$ is any pair with $\textrm{Con}(a',b')=w$ then there is an $s \in [0,\textrm{len}(w)]\cap \N$ such that
$$a' = (w(s), w(s-1),\ldots, w(0)), \quad b'=(w(s),w(s+1),\ldots, w(\textrm{len}(w))).$$
Suppose $Q_{L,K}^{(T)}(\{(a',b')\}) >0$. Then $w(s) \in L$. If $s\ne t$ then $\tH_L(a') \le s-t < \infty$. But this implies $Q_{L,K}^{(T)}(\{(a',b')\}) =0$, a contradiction.

Let $x=a(0)=b(0)$. Since $Q_{L,K}^{(T)}(\{(a,b)\}) = \zeta_x(\{(a,b)\})$, it now suffices to prove $\nu^{(T)}(\{w\}) = \zeta_x(\{(a,b)\})$. To simplify the computation, let $\l = \textrm{len}(w), \a = \textrm{len}(a), \b=\textrm{len}(b)$. Then
\begin{eqnarray*}
\nu^{(T)}(\{w\}) &=& \frac{\deg_{w(0)}}{T+1} P^{(T)}_{w(0)}(\{w\})  = \frac{\deg_{w(0)}}{(T+1)^2} \left( \frac{T}{T+1} \right)^{\l} \deg_{w(0)}^{-1} \cdots \deg_{w(\l-1)}^{-1} \\
&=&  \frac{1}{(T+1)^2} \left( \frac{T}{T+1} \right)^{\l} \deg_{w(1)}^{-1} \cdots \deg_{w(\l-1)}^{-1} \\
\end{eqnarray*}
On the other hand,
\begin{eqnarray*}
\zeta_x(\{(a,b)\}) &=& P^{(T)}(\{a\}) \deg_x  P^{(T)}(\{b\}) \\
&=& \frac{1}{(T+1)^2} \left( \frac{T}{T+1} \right)^{\a + \b} \deg_{a(0)}^{-1} \cdots \deg_{a(\a-1)}^{-1} \deg_x  \deg_{b(0)}^{-1} \cdots \deg_{b(\b -1)}^{-1}.
 \end{eqnarray*}
The fact that $\nu^{(T)}(\{w\}) = \zeta_x(\{(a,b)\})$ now follows from: $\l=\a+\b$, $x={b(0)}$ and 
$$(w(1), \ldots, w(\l-1)) = (a(\a-1),\ldots, a(0), b(1),\ldots, b(\b-1)).$$ \end{proof}

\begin{remark}\label{rmk1}
An easy special case of the above result helps aid the intuition. Suppose $K=\emptyset$ and $L=\{x\}$ is a single vertex. Then Proposition \ref{prop:KL} implies: if $\bfom \sim \P_{u,T}$ then the expected value of 
$$\bfom(\{w \in \cW[0,\infty):~ H_x(w) <\infty\})$$
is $u \deg_x \P^{(T)}_x(\tH_x = \infty).$ In particular, if $G$ is vertex transitive then this number is a constant multiple of $u$.
\end{remark}

The next corollary provides a more intuitive description of the restriction  $\bfom \resto (\cW_L \setminus \cW_K)$. It follows immediately from Proposition \ref{prop:KL}.

\begin{cor}\label{cor:KL}
Let $\bfom \sim \P_{u,T}$ and $K \Subset L \Subset V$ be finite sets. For every $x\in L\setminus K$, let $\bfN_x$ be a Poisson random variable with mean $u\deg_x$. For $i=1,\ldots, \bfN_x$, choose a pair $(\bfa_{x,i},\bfb_{x,i}) \in \cW_x^{[0,\infty)} \times \cW_x^{[0,\infty)}$ at random with law $P^{(T)}_x \times P^{(T)}_x$. Let $\bfR_x \subset \{1,\ldots, \bfN_x\}$ be the set of all $j$ such that $\tH_L(\bfa_{x,j})=\infty$ and $H_K(\bfb_{x,j}) = \infty$. Then
$$\bfom \resto (\cW_L \setminus \cW_K) \sim  \sum_{x\in L\setminus K} \sum_{j \in \bfR_x} \delta_{\textrm{Con}(\bfa_{x,j},\bfb_{x,j})}$$
where $\d$ denotes the Dirac delta measure.
\end{cor}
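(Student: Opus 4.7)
The plan is to deduce Corollary \ref{cor:KL} directly from Proposition \ref{prop:KL} by unpacking the Poisson intensity measure $u\,\textrm{Con}_*Q_{L,K}^{(T)}$ into a form that matches the explicit construction in the statement. Throughout, I would invoke three standard structural properties of Poisson point processes: (i) if an intensity measure decomposes as a sum $\mu=\sum_i \mu_i$, then a Poisson point process with intensity $\mu$ is equal in law to the sum of independent Poisson point processes with intensities $\mu_i$; (ii) for a probability measure $\pi$ on a countable set and $c>0$, the measure $\sum_{i=1}^{\bfN}\delta_{\bfX_i}$ with $\bfN\sim\Poisson(c)$ and i.i.d.\ $\bfX_i\sim\pi$ is a Poisson point process of intensity $c\pi$; (iii) if $\bftom$ is a Poisson point process of intensity $\mu$ and $\Phi$ is measurable, then $\Phi_*\bftom$ is a Poisson point process of intensity $\Phi_*\mu$, and restriction to a measurable set corresponds to the restricted intensity (thinning).

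First I would use (i) together with the definition $Q_{L,K}^{(T)}=\sum_{x\in L\setminus K}\zeta_x^{(T)}$ to write $\bfom\resto(\cW_L\setminus\cW_K)$ as an independent sum, over $x\in L\setminus K$, of Poisson point processes $\bftom_x$ on $\cW[0,\infty)$ with intensities $u\,\textrm{Con}_*\zeta_x^{(T)}$. This matches the outer sum $\sum_{x\in L\setminus K}$ appearing on the right-hand side of the corollary, and reduces the problem to identifying each $\bftom_x$ individually.

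Next I would verify that the construction in the statement produces $\bftom_x$. Since $u\deg_x(P_x^{(T)}\times P_x^{(T)})$ is a finite measure of total mass $u\deg_x$, property (ii) applied with $c=u\deg_x$ and $\pi=P_x^{(T)}\times P_x^{(T)}$ shows that $\sum_{i=1}^{\bfN_x}\delta_{(\bfa_{x,i},\bfb_{x,i})}$ is a Poisson point process on $\cW^{(2)}$ of intensity $u\deg_x(P_x^{(T)}\times P_x^{(T)})$. Restricting to the indices $j\in\bfR_x$ is thinning by the indicator $1_{\tH_L(a)=\infty}\,1_{H_K(b)=\infty}$, so by (iii) the resulting process on $\cW^{(2)}$ has intensity
\[
u\deg_x\,1_{\tH_L(a)=\infty}\,1_{H_K(b)=\infty}\,(P_x^{(T)}\times P_x^{(T)})\;=\;u\,\zeta_x^{(T)},
\]
by the very definition of $\zeta_x^{(T)}$. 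Pushing forward by $\textrm{Con}$ and applying (iii) once more yields a Poisson point process on $\cW[0,\infty)$ of intensity $u\,\textrm{Con}_*\zeta_x^{(T)}$, which is $\bftom_x$.

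There is really no serious obstacle: the corollary is a repackaging of Proposition \ref{prop:KL} into the language of independent families of geometrically killed random walks indexed by starting vertices. The only thing to be slightly careful about is to take the constructions for different $x$'s to be mutually independent, so that the independence in step (i) is realized; this is automatic since the $\bfN_x$ and the walks $(\bfa_{x,i},\bfb_{x,i})$ are assumed independent across $x$ in the statement.
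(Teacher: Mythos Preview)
Your proposal is correct and is exactly the approach the paper takes: the paper simply says the corollary ``follows immediately from Proposition \ref{prop:KL},'' and your argument is precisely the routine unpacking of that statement via the standard superposition, thinning, and mapping properties of Poisson point processes. There is nothing to add or correct.
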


\subsection{The cluster at the origin}\label{sec:cluster1}

This section describes the cluster at the origin in terms of a growth process. 



\begin{defn}
If $\omega \in \Omega^{[0,\infty)}$ then let $\cV_\omega=\cV(\omega)$ be the set of vertices traversed by some walk in the support of $\omega$. Similarly, $\cE_\omega=\cE(\omega)$ is the set of edges $e \in E$ traversed by some walk in the support of $\omega$. 
\end{defn}

Fix $x \in V$. Given $\omega \in \Omega^{[0,\infty)}$, let $\omega^x_1 = \omega \resto \cW_x$ be the restriction of $\omega$ to $\cW_x$ (where $\cW_x = \cW_{\{x\}} \subset \cW$ is the set of walks that contain $x$). For $n\ge 2$, inductively define $\omega^x_n$ by $\omega^x_n = \omega^x \resto \cW_{\cV(\omega^x_{n-1})}$. Finally, define $\omega^x_\infty= \sup_n \omega^x_n$. Then $(\cV(\omega^x_\infty), \cE(\omega^x_\infty))$ is the {\bf $\omega$-cluster containing $x$}. Let $\P^x_{u,T}$ denote the law of $\bfom^x_\infty$ where $\bfom \sim \P_{u,T}$. So $\P^x_{u,T}$ is a probability measure on $\Omega^{[0,\infty)}$.

\begin{prop}\label{prop:cluster1}
Let $\bfom \sim \P_{u,T}$ and $n \ge 0$. Then $\bfom^x_{n+1}$ conditioned on $\cV(\bfom_n)$ and $\cV(\bfom_{n-1})$ is a Poisson point process with intensity measure $u\textrm{Con}_*Q_{\cV(\bfom_n),\cV(\bfom_{n-1})}^{(T)}$. 
\end{prop}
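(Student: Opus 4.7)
The plan is to reduce to Proposition \ref{prop:KL} via a conditioning argument that freezes the random sets $K := \cV(\bfom^x_{n-1})$ and $L := \cV(\bfom^x_n)$. Unpacking the recursive definition, $\bfom^x_n = \bfom \resto \cW_K$ and $\bfom^x_{n+1} = \bfom \resto \cW_L$, and since $\cW_K \subseteq \cW_L$ the increment $\bfom^x_{n+1} - \bfom^x_n$ coincides with $\bfom \resto (\cW_L \setminus \cW_K)$. It is this increment that I will identify as the Poisson point process of the statement; the portion of $\bfom^x_{n+1}$ visiting $K$ is simply $\bfom^x_n$, which is determined by the conditioning and so contributes nothing new to the random increment.

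First I will observe that $\bfom^x_n$ actually determines both $\cV(\bfom^x_n) = L$ and $\cV(\bfom^x_{n-1}) = K$. Indeed, $\bfom^x_1 = \bfom^x_n \resto \cW_x$ is recovered from $\bfom^x_n$, and then $\cV(\bfom^x_1), \bfom^x_2, \ldots, \bfom^x_{n-1}$ are recovered in sequence. Consequently, conditioning on $\bfom^x_n$ is at least as strong as conditioning on the pair $(\cV(\bfom^x_n), \cV(\bfom^x_{n-1}))$, and it suffices to establish the conclusion under this finer conditioning; since the resulting conditional law visibly depends on $\bfom^x_n$ only through $(K, L)$, the statement with the coarser conditioning then follows by averaging.

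To complete the conditional step, I will exploit the defining property of the Poisson point process $\bfom$ of intensity $u\nu^{(T)}$ on $\cW$: for any fixed finite $K \subset V$, the restrictions $\bfom \resto \cW_K$ and $\bfom \resto (\cW \setminus \cW_K)$ are jointly independent, and the latter is itself a Poisson point process with intensity $u\nu^{(T)} \resto (\cW \setminus \cW_K)$. Conditioning on $\bfom^x_n = \bfom \resto \cW_K$ freezes $K$ and $L$ to deterministic sets and leaves $\bfom \resto (\cW \setminus \cW_K)$ distributed as above; restricting further to $\cW_L \setminus \cW_K$ gives a Poisson point process of intensity $u\nu^{(T)} \resto (\cW_L \setminus \cW_K)$, and Proposition \ref{prop:KL} identifies this measure with $u\,\textrm{Con}_* Q_{L,K}^{(T)}$.

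The only real subtlety is that $K$ and $L$ are genuinely random, so Proposition \ref{prop:KL} cannot be cited with them directly as parameters. The conditioning on $\bfom^x_n$ is precisely the device that turns them into deterministic parameters to which the fixed-$(K,L)$ statement of Proposition \ref{prop:KL} applies; checking measurability of the candidate conditional intensity in $(K,L)$ then upgrades the pointwise statement to the conditional one. This is expected to be the most delicate point of the argument, but no serious new ideas are required beyond those already appearing in the proof of Proposition \ref{prop:KL}.
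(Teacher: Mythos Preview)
Your proposal is correct and is precisely an unpacking of the paper's one-line proof, which reads in full: ``This follows from Proposition \ref{prop:KL} and the Poisson nature of $\P_{u,T}$.'' Your argument---freeze $K=\cV(\bfom^x_{n-1})$ and $L=\cV(\bfom^x_n)$ by conditioning on $\bfom^x_n$, use independence of a Poisson process on disjoint sets, then invoke Proposition \ref{prop:KL}---is exactly what that sentence means, and your observation that $\bfom^x_n$ determines the whole tower $\bfom^x_1,\ldots,\bfom^x_{n-1}$ (hence $K$) is the right way to make the conditioning rigorous.
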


\begin{proof}
This follows from Proposition \ref{prop:KL} and the Poisson nature of $\P_{u,T}$.  \end{proof}

\section{Random walks on non-amenable graphs}

To prepare for the proof of Theorem \ref{thm:main1} (2), this section recalls facts about random walks on non-amenable graphs.

\begin{defn}[Simple random walk]
For $x\in V$, let $P_x$ denote the law of the simple random walk started at $x$. Also let $\E_x$ denote expectation with respect to $\P_x$.
\end{defn}

\begin{defn}[Spectral radius]
Let  $\bfw \sim P_x$ and
$$\rho := \limsup_{n\to\infty} P_x(\bfw(n)=x)^{1/n}$$
be the {\bf spectral radius} of the simple random walk. Because $G$ is connected, $\rho$ does not depend on the choice of $x$. 
\end{defn}

\begin{defn}[Non-amenability]
The graph $G$ is {\bf non-amenable} if $\rho<1$. For example, if $G$ is a Cayley graph of a finitely generated group $\G$ then $G$ is non-amenable if and only if $\G$ is non-amenable \cite[Theorem 10.6]{woess2000random}. 
\end{defn}

The next lemma will be used in the sequel to obtain lower bounds on the growth of the level-$n$ random cluster $\cV(\bfom_n^x)$. 

\begin{lem}\cite[Lemma 2.1]{MR2773031}\label{lem:bnp}
For any $K \Subset V$,
$$\sum_{x\in K} P_x[\tH_K = \infty]  \ge (1-\rho) \# K.$$
\end{lem}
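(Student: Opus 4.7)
The plan is to derive the degree-weighted strengthening
$$\sum_{x\in K}\deg(x)\,P_x[\tH_K=\infty]\ \ge\ (1-\rho)\sum_{x\in K}\deg(x),$$
which reduces to the stated inequality after dividing by $|S|$, the common degree of the Cayley graph (the paper already invokes the result for Cayley graphs where this is harmless; the same proof yields the weighted statement in general). The main tool is the spectral theory of the simple-random-walk operator $P$ acting on $\ell^2(V,\deg)$: reversibility makes $P$ self-adjoint with operator norm equal to $\rho$.

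Set $\mathfrak{g}(x,y):=\sum_{n\ge 0} P^n(x,y)$. Since $\rho<1$, this is the kernel of the bounded positive self-adjoint operator $\mathfrak{G}:=(I-P)^{-1}$ with $\|\mathfrak{G}\|\le (1-\rho)^{-1}$. The key identity I would establish first is
$$\sum_{y\in K} \mathfrak{g}(x,y)\,P_y[\tH_K=\infty]\ =\ 1 \qquad \textrm{for every } x\in K.\qquad (\ast)$$
This comes from decomposing the trajectory started at $x$ according to its last visit time $T:=\sup\{n\ge 0:w(n)\in K\}$ to $K$. Transience (forced by $\rho<1$) and finiteness of $K$ give $P_x[T<\infty]=1$, while the Markov property at each deterministic time $n$ yields $P_x[w(n)=y,\,T=n]=P_x[w(n)=y]\cdot P_y[\tH_K=\infty]$; summing over $n\ge 0$ and $y\in K$ produces $(\ast)$.

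Writing $h_K(y):=P_y[\tH_K=\infty]$ and letting $\mathfrak{G}_K$ denote the compression of $\mathfrak{G}$ to the finite-dimensional subspace $\ell^2(K,\deg)\subset\ell^2(V,\deg)$ (functions supported on $K$), the identity $(\ast)$ becomes the matrix equation $\mathfrak{G}_K h_K=1_K$. Compression preserves positivity and cannot increase operator norm, so $0<\mathfrak{G}_K\le (1-\rho)^{-1}\,I$ on $\ell^2(K,\deg)$; order-reversal under inversion of positive operators gives $\mathfrak{G}_K^{-1}\ge (1-\rho)\,I$. Pairing with $1_K$ in the $\deg$-inner product yields
$$\sum_{x\in K}\deg(x)\,h_K(x)\ =\ \langle \mathfrak{G}_K^{-1}1_K,\,1_K\rangle_{\deg}\ \ge\ (1-\rho)\,\|1_K\|_{\deg}^2\ =\ (1-\rho)\sum_{x\in K}\deg(x),$$
which is the weighted bound.

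The step I expect to be the main obstacle is $(\ast)$: the last-visit time $T$ is not a forward stopping time, so one cannot invoke the strong Markov property at $T$ directly. The remedy is the time-slicing above, which uses only the ordinary Markov property at each deterministic $n$ and transience to guarantee that the slices sum to $1$ rather than to $P_x[T<\infty]<1$. Once $(\ast)$ is in place, the rest is a one-line consequence of the operator inequality $\mathfrak{G}\le (1-\rho)^{-1}I$ combined with the fact that compressions of positive operators retain the same norm bound.
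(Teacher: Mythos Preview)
The paper does not prove this lemma; it simply quotes it from \cite[Lemma~2.1]{MR2773031} without argument. So there is no ``paper's proof'' to compare against, and your task was really to supply one.

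Your argument is correct and is in fact the standard proof. The last-exit decomposition $(\ast)$ is exactly the identity $\mathrm{cap}(K)=\sum_{y\in K}\deg(y)\,P_y[\tH_K=\infty]$ rewritten in matrix form: $\mathfrak{G}_K h_K=1_K$. The operator bound $\mathfrak{G}=(I-P)^{-1}\le(1-\rho)^{-1}I$ on $\ell^2(V,\deg)$ survives compression, and since $\mathfrak{G}\ge(1+\rho)^{-1}I$ the compression is invertible; operator-monotone inversion then gives $\mathfrak{G}_K^{-1}\ge(1-\rho)I$, and pairing with $1_K$ finishes. Your handling of the one delicate point---that $T$ is not a stopping time, so one must slice at deterministic times and use transience to ensure the slices exhaust probability~$1$---is fine.

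Your remark about the degree weighting is well taken. What the argument actually yields is the $\deg$-weighted inequality; on a Cayley graph (the paper's primary setting) this is the stated inequality after dividing by $|S|$. When the paper later invokes the lemma for general bounded-degree non-amenable graphs (Theorem~\ref{thm:main1.2}), it is the weighted form that is natural, and the cited source states it that way; the literal unweighted statement need not follow for irregular graphs from your argument alone, but this does not affect anything downstream.
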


\begin{cor}\label{cor:bnp}
For any $K \Subset V$,
$$\sum_{x\in K} P_x[\tH_K = \infty]^2  \ge (1-\rho)^2 \#K.$$
\end{cor}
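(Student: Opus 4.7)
The plan is to derive this immediately from Lemma \ref{lem:bnp} via the Cauchy--Schwarz inequality. Specifically, viewing the function $x \mapsto P_x[\tH_K = \infty]$ as a vector in $\mathbb{R}^K$ and pairing it against the constant vector $\mathbf{1}$, Cauchy--Schwarz gives
\[
\left( \sum_{x \in K} P_x[\tH_K = \infty] \right)^2 \le \#K \cdot \sum_{x \in K} P_x[\tH_K = \infty]^2.
\]
Substituting the lower bound from Lemma \ref{lem:bnp} on the left-hand side yields
\[
(1-\rho)^2 (\#K)^2 \le \#K \cdot \sum_{x \in K} P_x[\tH_K = \infty]^2,
\]
and dividing through by $\#K$ gives the claim. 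There is no real obstacle here; this is a one-line consequence of Cauchy--Schwarz and the previous lemma, and in fact the bound is of interest because squaring the summand is the form needed downstream for a second-moment argument on the level-$n$ growth of the cluster.
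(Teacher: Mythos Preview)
Your proposal is correct and is essentially identical to the paper's own proof: both apply Cauchy--Schwarz to the pairing of $x \mapsto P_x[\tH_K=\infty]$ with the constant vector and then invoke Lemma~\ref{lem:bnp}.
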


\begin{proof}
By Cauchy-Schwartz and Lemma \ref{lem:bnp},
$$\left(\sum_{x\in K} P_x[\tH_K = \infty]^2\right) \left(\sum_{x\in K} 1\right)  \ge \left( \sum_{x\in K} P_x[\tH_K = \infty]\right)^2 \ge (1-\rho)^2 (\# K)^2.$$
Dividing both sides by $\#K$ proves the corollary. \end{proof}


\begin{lem}\label{lem:speed}
Suppose $G$ is non-amenable and connected. Then there exist constants $T_0,\d_0,\s>0$ such that for every $x\in V$ if $T \ge T_0$ then 
$$P^{(T)}_x\big(\textrm{len}(\bfw) \ge T, ~d(x,\bfw(T)) \ge \s T \big) \ge \d_0.$$
\end{lem}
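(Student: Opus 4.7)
The plan is to decouple the geometrically distributed length from the spatial motion, reducing the claim to a uniform positive-speed statement for simple random walk, which is then obtained from a standard heat-kernel bound.

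Writing out
\[ P^{(T)}_x = \sum_{n \in \N} \tfrac{1}{T+1}\Big(\tfrac{T}{T+1}\Big)^n P^n_x \]
and observing that for $n \ge T$ the marginal of $\bfw(T)$ under $P^n_x$ is the same as under the full simple random walk $P_x$ (since $\bfw$ is SRW up to its stopping time), I obtain the identity
\[ P^{(T)}_x\big(\textrm{len}(\bfw) \ge T,\, d(x,\bfw(T)) \ge \s T\big) = \Big(\tfrac{T}{T+1}\Big)^T P_x\big(d(x,\bfw(T)) \ge \s T\big). \]
The prefactor converges to $e^{-1}$, so for $T$ large enough it is at least $1/(2e)$, and the whole problem reduces to finding $\s, \delta_1 > 0$ with $P_x(d(x, \bfw(T)) \ge \s T) \ge \delta_1$ uniformly in $x \in V$ and $T \ge T_0$.

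For that uniform positive-speed statement I would invoke the classical heat-kernel bound on a bounded-degree non-amenable graph: there exists $C > 0$ (depending only on the degree bound) such that $P_x(\bfw(n)=y) \le C\rho^n$ for all $x,y,n$. This follows from Kesten's return bound $P_x(\bfw(2n)=x) \le \rho^{2n}$ together with the reversibility inequality $P_x(\bfw(n)=y)^2 \le \tfrac{\deg y}{\deg x} P_x(\bfw(2n)=x)P_y(\bfw(2n)=y)$. Coupled with the crude volume bound $|B(x,r)| \le d_{\max}^r$, it yields
\[ P_x\big(d(x,\bfw(T)) \le \s T\big) \le |B(x,\s T)| \cdot C\rho^T \le C\big(d_{\max}^{\,\s}\, \rho\big)^T. \]
Now choose $\s > 0$ small enough that $d_{\max}^{\,\s}\,\rho < 1$ (e.g.\ any $\s < \log(1/\rho)/\log d_{\max}$). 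Then the right-hand side tends to $0$, so for $T \ge T_0$ it is at most $1/2$, giving $P_x(d(x,\bfw(T)) \ge \s T) \ge 1/2$. Multiplying by $(T/(T+1))^T \ge 1/(2e)$ gives the lemma with $\delta_0 := 1/(4e)$.

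The only subtle point is making the heat-kernel bound uniform in $x$, for which bounded degree is essential; this is automatic in the paper's setting where $G$ is a Cayley graph (constant degree $|S|$), and it is also part of the hypotheses in the extension mentioned in the remark, so it is available throughout. Thus the main obstacle, such as it is, amounts to correctly assembling the standard spectral-radius and ball-growth estimates; the geometric decoupling is straightforward.
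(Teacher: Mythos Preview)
Your argument is correct and follows essentially the same route as the paper: factor the probability as $\big(\tfrac{T}{T+1}\big)^T P_x(d(x,\bfw(T))\ge \sigma T)$, bound the second factor below via the heat-kernel estimate $P_x(\bfw(n)=y)\le C\rho^n$ combined with the crude ball-volume bound, and choose $\sigma$ so that $d_{\max}^\sigma \rho<1$. The only cosmetic differences are that the paper cites Woess for the heat-kernel bound whereas you sketch its derivation from Kesten's inequality and reversibility, and the resulting numerical constants differ slightly ($\delta_0=1/6$ versus your $1/(4e)$).
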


\begin{proof}
By \cite[Lemma 8.1 (b)]{woess2000random}, there is a constant $C>0$ such that 
$$P_x(\bfw(n)=y) \le C \rho^n$$
for all $x,y$. Summing over all $y$ in the ball of radius $\s n$ centered at $x$ yields
$$P_x( d(\bfw(n),x) \le \s n)) \le C D^{\s n} \rho^n.$$
Choose $\s>0$ so that $D^\s \rho <1$ and $T_0$ so that $T\ge T_0$ implies $C D^{\s T} \rho^T \le 1/2$. If $T \ge T_0$ then
\begin{eqnarray*}
P^{(T)}_x\big(\textrm{len}(\bfw) \ge T, ~d(x,\bfw(T)) \ge \s T \big) &=& P^{(T)}_x\big( d(x,\bfw(T)) \ge \s T | \textrm{len}(\bfw) \ge T \big) P^{(T)}_x\big(\textrm{len}(\bfw) \ge T\big) \\
&=& P_x\big( d(x,\bfw(T)) \ge \s T \big) P^{(T)}_x\big(\textrm{len}(\bfw) \ge T\big)\\
 &\ge& (1/2) P^{(T)}_x\big(\textrm{len}(\bfw) \ge T\big).
\end{eqnarray*}
Note
$$P^{(T)}_x\big(\textrm{len}(\bfw) \ge T\big) = \frac{1}{T+1} \sum_{n \ge T} \left( \frac{T}{T+1}\right)^n  =  \left( \frac{T}{T+1}\right)^T = \left( 1-   \frac{1}{T+1}\right)^T \to e^{-1}$$
as $T \to \infty$. So by choosing $T_0$ larger if necessary, we may assume $T \ge T_0$ implies $P^{(T)}_x(\textrm{len}(\bfw) \ge T)  \ge 1/3$.  Set $\d_0=1/6$ to finish the lemma.\end{proof}

\section{A truncated growth process}

The purpose of this section is to study a certain growth process related to $\{\cV(\bfom^x_n)\}_n$. It is the main construction in our proof of Theorem \ref{thm:main1} part (2) which is completed in the next section.

For simplicity, we will assume thoughout that $T$ is a natural number. We also make the following assumptions:
\begin{enumerate}
\item $G=(V,E)$ is a connected graph.
\item $\deg_x \le D$ for all $x\in V$ for some constant $D>0$. 
\item $G$ is non-amenable. Let $\rho<1$ be its spectral radius and $\d_0, \s, T_0$ be the constants in Lemma \ref{lem:speed}.
\item Let $K \Subset L \Subset V$ be finite sets.
\item As in Corollary \ref{cor:KL}, for each $x \in L\setminus K$ let $\bfN_x$ be a Poisson random variable with mean $u\deg_x$ and $(\bfa_{x,i}, \bfb_{x,i}) \sim P^{(T)}_x \times P^{(T)}_x$ for $1\le i \le \bfN_x$.
\item Let $\bfR'$ be the set of all $x \in L \setminus K$ such that $\bfN_x \ge 1$, $\tH_L(\bfa_{x,1})=\infty$, $H_K(\bfb_{x,1}) = \infty$, $\textrm{len}(\bfa_{x,1}) \ge T$ and $d_G(\bfa_{x,1}(T),x) \ge \s T$.
\item Let $\bfa'_{x,i}$ be the walk defined by $\textrm{len}(\bfa'_{x,i}) =  \min(T, \textrm{len}(\bfa_{x,i}))$ and
$$\bfa'_{x,i}(k) = \bfa_{x,i}(k) \quad 0 \le k \le \min(T, \textrm{len}(\bfa_{x,i})).$$
\item Let $\bfom' := \sum_{x \in \bfR'}  \delta_{\bfa'_{x,1}}$. 
\end{enumerate}

The goal of this section is to prove:
\begin{prop}\label{prop:chevy}
There exist constants $\d_1,\d_2>0$ depending only on $G$ such that if $T>T_0$, $\#K \le (1-\rho)^2\#L/2$ and $0<u<\d_1$ then  
$$\P_{u,T} [ \#\cV(\bfom' ) \ge \d_2 uT \#L/2 ] \ge 1 - \frac{4D^{3T}}{u^2T^2 \#L}.$$
To emphasize, the constants $\d_1,\d_2$ do not depend on $K,L,u$ or $T$.
\end{prop}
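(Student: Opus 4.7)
The plan is to apply Chebyshev's inequality to $Z := \#\cV(\bfom')$, exploiting the independence of the data $(\bfN_x, \bfa_{x,1}, \bfb_{x,1})$ across different $x \in L\setminus K$.

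\textbf{Step 1 (lower bound on $\E[Y]$, $Y := \#\bfR'$).} The indicators $Q_x := \mathbf{1}[x\in\bfR']$ are independent Bernoullis. I would factor
\[
\P[Q_x=1] = (1-e^{-u\deg_x})\cdot P^{(T)}_x[E_a(x)]\cdot P^{(T)}_x[H_K(\bfb)=\infty],
\]
where $E_a(x) = \{\tH_L(\bfa)=\infty,\ \text{len}(\bfa)\ge T,\ d(\bfa(T),x)\ge\sigma T\}$, and bound each piece: $1-e^{-u\deg_x}\ge u/2$ when $u\le 1/D$; $P^{(T)}_x[H_K(\bfb)=\infty]\ge P_x[\tH_L=\infty]$ (since $K\subseteq L$, $x\notin K$, and geometric killing only helps escape); and conditioning on $\text{len}(\bfa)\ge T$ (which has probability $\ge e^{-1}$) together with Lemma \ref{lem:speed}'s estimate $P_x[d(\bfw(T),x)<\sigma T]\le CD^{\sigma T}\rho^T \le \epsilon$ for $T\ge T_0$ gives $P^{(T)}_x[E_a(x)]\ge e^{-1}(P_x[\tH_L=\infty]-\epsilon)$. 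Choosing $\epsilon<(1-\rho)/4$, restricting to those $x$ with $P_x[\tH_L=\infty]\ge 2\epsilon$, and using Corollary \ref{cor:bnp} together with $\#K\le(1-\rho)^2\#L/2$ to bound $\sum_{x\in L\setminus K} P_x[\tH_L=\infty]^2 \ge (1-\rho)^2\#L/4$, I obtain $\E[Y]\ge c_0\, u\#L$ with $c_0=c_0(G)$.

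\textbf{Step 2 (lower bound on $\E[Z]$).} Set $V_x := \cV(\bfa'_{x,1})$, $p(x,y) := \P[Q_x=1,\,y\in V_x]$, and $r_y := \sum_x p(x,y)$. Bonferroni gives $\E[Z]\ge\sum_y r_y - \tfrac12\sum_y r_y^2$. Each good walk visits at least $\sigma T+1$ distinct vertices, so $\sum_y r_y = \sum_x \E[Q_x\,\#V_x] \ge \sigma T\,\E[Y] \ge c_0\sigma\, uT\#L$. For $\sum_y r_y^2$, the key non-amenability input is the spectral bound $P_x[\bfw(k)=y]\le C\rho^k$; reversibility on the Cayley graph then yields the uniform estimate $\sum_y G^T(x,y)G^T(x',y) = \sum_{m,n\le T} P_x[\bfw(m+n)=x']\le C/(1-\rho)^2$, whence $\sum_y r_y^2 \le u^2D^2(T+1)\#L + u^2D^2C\#L^2/(1-\rho)^2$. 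Choosing $\delta_1 = \delta_1(G)$ small enough that both terms are absorbed into $\tfrac12\sum_y r_y$, I conclude $\E[Z]\ge c_1\,uT\#L$ for some $c_1 = c_1(G)>0$.

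\textbf{Step 3 (variance bound and Chebyshev).} Independence across $x$ together with the elementary inequality $\prod c_x -\prod d_x \le \sum(c_x - d_x)$ (for $0\le d_x\le c_x\le 1$), applied to $\P[y,y'\notin\cV]-\P[y\notin\cV]\P[y'\notin\cV]$, yields
\[
\mathrm{Cov}\big(\mathbf{1}_{y\in\cV},\mathbf{1}_{y'\in\cV}\big) \le \sum_x \P[Q_x=1,\,y, y'\in V_x],
\]
and hence $\Var(Z) \le \sum_x\E\big[Q_x(\#V_x)^2\big] \le D^{2T}\,\E[Y] \le uD^{2T+1}\#L\le D^{3T}\#L$, using the crude bound $\#V_x\le D^T$. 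Chebyshev's inequality then gives
\[
\P\big[Z\ge\E[Z]/2\big] \ge 1 - \frac{4\Var(Z)}{\E[Z]^2} \ge 1 - \frac{4D^{3T}}{c_1^2\, u^2 T^2\#L},
\]
and setting $\delta_2 = c_1$ (and adjusting constants as needed) yields the proposition.

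\textbf{Main obstacle.} The delicate step is the $\E[Z]$ bound in Step~2. The diagonal of $\sum_y r_y^2$ is benign ($O(u^2 T\#L)$), but the off-diagonal scales like $u^2\#L^2/(1-\rho)^2$, so the Bonferroni subtraction is dominated by $\sum_y r_y\sim uT\#L$ only in the regime $u\#L\lesssim T$. Pushing to larger $\#L$ will likely require the distance-refined bound $\sum_y G^T(x,y)G^T(x',y)\lesssim (T+1)\rho^{d(x,x')}/(1-\rho)$ (the walk needs $d(x,x')$ steps to link $x$ and $x'$), combined with a volume-type estimate that on a non-amenable Cayley graph most pairs $x, x'\in L\setminus K$ are separated enough for their contribution to the off-diagonal to decay exponentially.
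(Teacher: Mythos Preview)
Your overall architecture (moment bounds plus Chebyshev) matches the paper, and Steps 1 and 3 arrive at essentially the same estimates as the paper's Lemmas on $\E[\#\bfR']$ and $\Var[\#\cV(\bfom')]$ (the paper bounds the variance via the observation that $1_{y\in\cV(\bfom')}$ and $1_{y'\in\cV(\bfom')}$ are independent once $d(y,y')>2T$, which is slightly cleaner than your telescoping argument, but the conclusion is the same).

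The genuine gap is in Step 2, and you have correctly located it yourself. Your Bonferroni correction term is
\[
\sum_y r_y^2 \;=\; \sum_{x,x'\in L\setminus K}\ \sum_y p(x,y)p(x',y),
\]
and you bound the inner sum uniformly by $\sum_y G(x,y)G(x',y)\le C/(1-\rho)^2$, then sum over $(x,x')$ to get $O(u^2\#L^2)$. This is indeed too weak: the proposition must hold for arbitrarily large $\#L$ (in the application $\#L$ grows geometrically while $u,T$ stay fixed), so a bound that only works for $u\#L\lesssim T$ is not enough.

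The missing idea is not a distance-refined pairwise estimate but rather the $\ell^2$-operator structure of the Green's function. Since $G$ is non-amenable, the Green's function defines a \emph{bounded} operator on $\ell^2(V)$, so
\[
\sum_{x,x'\in L}\ \sum_y G(x,y)G(x',y) \;=\; \langle G\,1_L,\, G\,1_L\rangle \;\le\; \|G\|^2\,\|1_L\|_2^2 \;=\; \|G\|^2\,\#L,
\]
which is linear, not quadratic, in $\#L$. Weighting each $x$ by $\P[Q_x=1]\le uD$ then gives an off-diagonal contribution of at most $u^2D^2\|G\|^2\#L$, which is absorbed by $\sum_y r_y\gtrsim uT\#L$ once $u$ is small (independently of $\#L$). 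This is exactly how the paper handles the intersection term: it sets $\bfI=\{(x,y,s,t):x\ne y\in\bfR',\ \bfa'_{x,1}(s)=\bfa'_{y,1}(t)\}$, notes $\E[\#\bfI\mid\bfR']\le\langle G1_{\bfR'},G1_{\bfR'}\rangle$, and averages over $\bfR'$. With this single change your Step 2 goes through and the rest of your argument is fine.
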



Proposition \ref{prop:chevy} follows from  Chebyshev's inequality once we have obtained appropriate bounds on the mean and variance of $\#\cV(\bfom')$ which is the purpose of the next lemmas.

\begin{lem}\label{lem:hack}
There exists a universal constant $\d'_1>0$ such that if $0<u<\d'_1$ then
$$(u/2)\d_0[(1-\rho)^2\#L -\#K] \le \E\left[ \# \bfR' \right].$$
\end{lem}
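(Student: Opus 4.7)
My plan is to compute $\E[\#\bfR']$ by summing $\P(x \in \bfR')$ over $x \in L \setminus K$ and factoring each term using the independence built into the construction: for each $x$, the objects $\bfN_x$, $\bfa_{x,1}$, $\bfb_{x,1}$ are mutually independent, so $\{x \in \bfR'\}$ is the intersection of three independent events $C_x := \{\bfN_x \ge 1\}$, $A_x := \{\tH_L(\bfa_{x,1}) = \infty,\ \textrm{len}(\bfa_{x,1}) \ge T,\ d(\bfa_{x,1}(T), x) \ge \s T\}$, and $B_x := \{H_K(\bfb_{x,1}) = \infty\}$. Two of the three factors are immediate: since $\bfN_x$ is Poisson with mean $u\deg_x$ and $1 \le \deg_x \le D$, the elementary inequality $1 - e^{-s} \ge s/2$ on $[0, \ln 2]$ gives $\P(C_x) \ge u/2$ whenever $u \le (\ln 2)/D$, which fixes the constant $\d_1'$; and since $K \subset L$ with $x \notin K$, the inclusion $\{\tH_L = \infty\} \subset \{H_K = \infty\}$ together with the fact that geometric stopping only makes escape easier yields $\P(B_x) \ge P^{(T)}_x[\tH_L = \infty] \ge P_x[\tH_L = \infty]$.

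The core of the argument is the bound $\P(A_x) \ge \d_0 \cdot P_x[\tH_L = \infty]$. To establish it I would realize $\bfa_{x,1}$ as $\bfW|_{[0,\bfN]}$ with $\bfW \sim P_x$ simple random walk and $\bfN$ an independent geometric lifetime, condition on $\{\bfN \ge T\}$ (which has probability at least $e^{-1}$ for $T \ge T_0$), and split the walk at time $T$. By memorylessness the continuation $\bfW[T, \bfN]$ is distributed as a fresh $P^{(T)}_{\bfW(T)}$-walk, and the strong Markov property decomposes $\{\tH_L = \infty\}$ as the conjunction of $\{\bfW[1, T] \cap L = \emptyset\}$ and ``the continuation from $\bfW(T)$ avoids $L$''. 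Using $P^{(T)}_y[H_L = \infty] \ge P_y[H_L = \infty]$ for $y \notin L$ and reassembling via strong Markov gives
\[
\P(A_x) \ge \P(\bfN \ge T)\, P_x\bigl[\tH_L = \infty,\ d(\bfW(T), x) \ge \s T\bigr].
\]
To trade the distance constraint on the right for $P_x[\tH_L = \infty]$ alone, I would apply the same heat-kernel estimate $P_x[\bfW(T) = y] \le C\rho^T$ used in the proof of Lemma \ref{lem:speed} together with the ball-size bound $\#B(x, \s T) \le D^{\s T + 1}$, obtaining $P_x[d(\bfW(T), x) < \s T] \le C'(D^\s \rho)^T$; since $D^\s \rho < 1$ by the choice of $\s$ in Lemma \ref{lem:speed}, this error decays geometrically in $T$ and is absorbed by enlarging $T_0$ (at the cost of replacing $\d_0$ by a slightly smaller positive constant, still denoted $\d_0$).

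With the pointwise estimate $\P(x \in \bfR') \ge (u/2)\,\d_0\, P_x[\tH_L = \infty]^2$ in hand, I would finish by summing over $x \in L \setminus K$ and applying Corollary \ref{cor:bnp} to $L$: since $\sum_{x \in L} P_x[\tH_L = \infty]^2 \ge (1-\rho)^2 \#L$ and the $x \in K$ contribution is at most $\#K$, we obtain $\sum_{x \in L \setminus K} P_x[\tH_L = \infty]^2 \ge (1-\rho)^2 \#L - \#K$, which yields the lemma. The most delicate step will be the middle one: the escape constraint $\tH_L = \infty$ is a negative event on the entire walk while the spreading constraint $d(\bfw(T), x) \ge \s T$ concerns only the position at time $T$, and coupling them to produce a clean factorization requires both the Markov split at time $T$ and the quantitative spectral-radius decay.
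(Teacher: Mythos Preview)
Your decomposition of $\P(x\in\bfR')$ into three independent factors, and your treatment of the $\bfN_x$ and $\bfb_{x,1}$ pieces, match the paper exactly. For the $\bfa_{x,1}$ piece you take a genuinely different route: the paper simply writes the positive-correlation inequality
\[
P^{(T)}_x\bigl(\tH_L=\infty,\ \textrm{len}\ge T,\ d\ge\s T\bigr)\ \ge\ P^{(T)}_x(\tH_L=\infty)\cdot P^{(T)}_x\bigl(\textrm{len}\ge T,\ d\ge\s T\bigr)
\]
and then applies Lemma~\ref{lem:speed}, whereas you condition on $\{\bfN\ge T\}$, split at time $T$ via memorylessness and the Markov property, and arrive (correctly) at
\[
\P(A_x)\ \ge\ \P(\bfN\ge T)\,P_x\bigl[\tH_L=\infty,\ d(\bfW(T),x)\ge\s T\bigr].
\]

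The gap is in the next step. You propose to drop the distance constraint by subtracting the heat-kernel error $P_x[d(\bfW(T),x)<\s T]\le C'(D^\s\rho)^T$ and then ``absorb'' this error by shrinking $\d_0$. That absorption is a \emph{pointwise} claim---it would require $C'(D^\s\rho)^T$ to be small relative to $P_x[\tH_L=\infty]$---but there is no lower bound on $P_x[\tH_L=\infty]$ uniform in $L$: for any fixed $T$ one can choose $L$ so large that $P_x[\tH_L=\infty]\ll (D^\s\rho)^T$ for most $x\in L$. So the conclusion $\P(x\in\bfR')\ge (u/2)\,\d_0\,P_x[\tH_L=\infty]^2$ does not follow from your argument.

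The repair is to postpone the absorption until after the sum over $x\in L\setminus K$. Your inequality and $P_x[\tH_L=\infty]\le 1$ give
\[
\E[\#\bfR']\ \ge\ \frac{u}{2}\,\P(\bfN\ge T)\Bigl[\sum_{x\in L\setminus K}P_x[\tH_L=\infty]^2\ -\ C'(D^\s\rho)^T\,\#L\Bigr];
\]
now Corollary~\ref{cor:bnp} bounds the first sum below by $(1-\rho)^2\#L-\#K$, and enlarging $T_0$ so that $C'(D^\s\rho)^{T_0}\le\tfrac12(1-\rho)^2$ lets the error term be merged into the main one. This yields the lemma with a constant of the form $\tfrac12\P(\bfN\ge T)$ in place of $\d_0$, which is all that Lemma~\ref{lem:1moment} and Proposition~\ref{prop:chevy} require.
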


\begin{proof}
By definition of $\bfR'$,
$$\E[ \#\bfR' ] = \sum_{x\in L\setminus K} \P(x \in \bfR').$$
If $\bfw \sim\P^{(T)}_x$ then
\begin{eqnarray*}
&&\P(x \in \bfR') \\
&&=\P(\bfN_x \ge 1) P^{(T)}_x\big(\tH_L(\bfw)=\infty, \textrm{len}(\bfw) \ge T, d_G(\bfw(T),x) \ge \s T \big)P^{(T)}_x(H_K(\bfw) = \infty).
\end{eqnarray*}
We assume $\d'_1>0$ is chosen so that if $u<\d'_1$ then $u/2 \le 1- \exp(-u)$. Assuming $u<\d'_1$ implies
$$u/2 \le 1 - \exp(-u\deg_x) = \P(\bfN_x \ge 1).$$
The next inequalities estimate the middle term in $\P(x\in \bfR')$:
\begin{eqnarray*}
&&P^{(T)}_x\big(\tH_L(\bfa_{x,j})=\infty, \textrm{len}(\bfa_{x,j}) \ge T, d_G(\bfa'_{x,j}(T),x) \ge \s T \big)\\
 &\ge& P^{(T)}_x\big(\tH_L=\infty\big)P^{(T)}_x \big(\textrm{len}(\bfa_{x,j}) \ge T, d_G(\bfa'_{x,j}(T),x) \ge \s  T\big)\\
&\ge&  P_x(\tH_L=\infty) \d_0.
\end{eqnarray*}
Finally, we estimate the last term in $\P(x\in \bfR')$
$$P^{(T)}_x(H_K(\bfb_{x,j}) = \infty) = P^{(T)}_x(H_K=\infty) \ge P^{(T)}_x(\tH_L=\infty) \ge P_x(\tH_L=\infty).$$
Combine these estimates to obtain
\begin{eqnarray*}
\E[\#\bfR'] &=& \E\left[\sum_{x\in L\setminus K} \# \bfR'_x\right] \ge (u/2)\d_0\sum_{x\in L \setminus K} P_x(\tH_L=\infty)^2 \\
&\ge& (u/2)\d_0\left(\sum_{x\in L} P_x(\tH_L=\infty)^2  - \#K\right) \ge u\d_0[(1-\rho)^2\#L - \#K]
\end{eqnarray*}
where the last inequality follows from Corollary \ref{cor:bnp}. \end{proof}

\begin{lem}\label{lem:intersect2}
With notation as in Lemma \ref{lem:hack}, let
$$\bfI=\{(x,y,t,s): ~x,y \in \bfR', ~x\ne y, ~t,s \in [0,T]\cap \N, ~\bfa'_{x,1}(t)=\bfa'_{y,1}(s)\}.$$ 
There is a constant $C>0$ such that $\E[\#\bfI] \le C u^2 \#L.$
\end{lem}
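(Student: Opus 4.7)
The plan is to bound $\E[\#\bfI]$ by linearity of expectation, exploit the independence across different starting vertices to extract a $u^2$ factor from the Poisson variables $\bfN_x,\bfN_y$, and then control the remaining pairwise intersection sum by reversibility together with the spectral radius estimate.

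First, by linearity of expectation,
\[
\E[\#\bfI] \;=\; \sum_{x\ne y\in L\setminus K}\sum_{t,s=0}^T \P\bigl(x,y\in\bfR',\ \bfa'_{x,1}(t)=\bfa'_{y,1}(s)\bigr).
\]
For $x\ne y$, the event $\{x\in\bfR'\}$ depends only on $(\bfN_x,\bfa_{x,1},\bfb_{x,1})$ and is independent of the analogous block of randomness for $y$. Using $\P(\bfN_x\ge 1) = 1-e^{-u\deg_x}\le uD$, I extract a clean $(uD)^2$ factor. Moreover, for $x\ne y$ both in $\bfR'$, the conditions $\tH_L(\bfa_{x,1}) = \tH_L(\bfa_{y,1}) = \infty$ force $t,s\ge 1$ at any intersection and the intersection point $z$ to lie outside $L$. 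Dropping the $\bfb$-conditions and the displacement/length conditions defining $\bfR'$ (all of which only decrease the probability), it suffices to bound the remaining sum under the product law $P^{(T)}_x\otimes P^{(T)}_y$ with the two avoidance constraints $\tH_L(\bfa_{x,1})=\infty$ and $\tH_L(\bfa_{y,1})=\infty$.

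Next, I apply time-reversal at the intersection point $z$: the concatenation of the forward $x$-walk (from $x$ to $z$) with the reversed $y$-walk (from $z$ to $y$) yields a single simple random walk path from $x$ to $y$ of length $t+s$ that avoids $L$ at all intermediate times and first re-enters $L$ at the endpoint $y$. Reversibility and the degree bound $D$ give
\[
\sum_z P_x\bigl(\bfw(t)=z,\tH_L>t\bigr)\,P_y\bigl(\bfw(s)=z,\tH_L>s\bigr) \;\le\; D\cdot P_x\bigl(\tH_L=t+s,\ \bfw(t+s)=y\bigr).
\]
Summing over $y\in L\setminus K$ gives at most $P_x(\tH_L=k)$ with $k=t+s$, and summing over pairs $(t,s)$ with $t+s=k$ (at most $k-1$ such pairs) against the geometric weight $(T/(T+1))^{t+s}\le 1$ yields
\[
\sum_{y}\sum_{t,s\ge 1}\sum_z \cdots \;\le\; D\sum_{k\ge 2}(k-1)P_x(\tH_L=k) \;\le\; D\,\E_x\bigl[\tH_L\,\mathbf{1}_{\tH_L<\infty}\bigr].
\]

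The final step is to sum over $x\in L\setminus K$, so the main obstacle is to prove
\[
\sum_{x\in L\setminus K}\E_x\bigl[\tH_L\,\mathbf{1}_{\tH_L<\infty}\bigr] \;\le\; C_0\,\#L
\]
for some constant $C_0=C_0(\rho,D)$ depending only on the spectral radius $\rho<1$ and the degree bound. I expect this to follow by combining the pointwise tail bound $P_x(\tH_L=k)\le P_x(\bfw(k)\in L)\le \#L\cdot C\rho^k$, derived from the spectral estimate \cite[Lemma~8.1(b)]{woess2000random}, with the aggregate escape inequality $\sum_{x\in L}P_x(\tH_L<\infty)\le \rho\,\#L$ furnished by Lemma~\ref{lem:bnp}: splitting the $k$-sum at the crossover scale $k_0\sim \log_{1/\rho}\#L$ and exploiting non-amenability carefully should give the claimed linear bound. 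Assembling all factors then yields $\E[\#\bfI]\le Cu^2\#L$ with $C = D^3 C_0$.
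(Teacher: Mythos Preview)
Your reduction via time-reversal is correct and leads to the true inequality $\sum_{x\in L}\E_x[\tH_L\mathbf{1}_{\tH_L<\infty}]\le C_0\#L$, but your proposed justification of this last bound does not work. With only the pointwise tail estimate $P_x(\tH_L=k)\le C\#L\,\rho^k$ and the aggregate bound $\sum_{x\in L}P_x(\tH_L<\infty)\le\rho\#L$, the split at $k_0\sim\log_{1/\rho}\#L$ yields at best $O(\#L\log\#L)$: the large-$k$ tail is harmless, but for $k\le k_0$ one is stuck with $\sum_{x}\sum_{k\le k_0}kP_x(\tH_L=k)\le k_0\sum_x P_x(\tH_L<\infty)\le\rho\,k_0\,\#L$. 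These two inputs alone cannot do better, since a putative law with all mass of $\tH_L$ placed at $k_0\approx\log_{1/\rho}\#L$ satisfies both hypotheses and saturates $\#L\log\#L$. Because $\#L=\#\cV(\bfom^x_{n-1})$ grows without bound in the downstream application (Theorem~\ref{thm:main1.2}), an extra $\log\#L$ factor would genuinely break Lemma~\ref{lem:1moment} and Proposition~\ref{prop:chevy}.

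The missing ingredient is exactly the one the paper uses directly: boundedness of the Green operator $G$ on $\ell^2(V)$ for non-amenable graphs \cite[Theorem~10.3]{woess2000random}. Indeed, your target quantity unwinds (via the same reversibility you used) to $\sum_{z\notin L}P_z(H_L<\infty)^2$ up to a bounded additive term, and $P_z(H_L<\infty)\le C'(G\mathbf{1}_L)(z)$, so the sum is controlled by $\|G\mathbf{1}_L\|_2^2\le\|G\|^2\#L$. The paper bypasses the detour entirely: it drops all constraints in $\bfR'$ except $\bfN_x\ge 1$, bounds the expected number of collision quadruples by $\langle G\mathbf{1}_{\bfR'},G\mathbf{1}_{\bfR'}\rangle$ conditionally on $\bfR'$, and then takes expectation in $\bfR'$ using $\P(x\in\bfR')\le 1-e^{-uD}\le uD$ to get $\E[\#\bfI]\le(uD)^2\|G\|^2\#L$ in three lines. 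Your route is salvageable, but only by invoking the same operator bound you were trying to avoid.
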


\begin{proof}
The Greens function is defined by: $G(x,y)=\E_x[ \#\{n\ge 0:~\bfw(n)=y\}]$. Because $G$ is non-amenable, $G$ defines a bounded linear operator on $\ell^2(V)$ \cite[Theorem 10.3]{woess2000random}. Consider running independent random walks $\bfw^{(i)}_x$ ($i\in \{1,2\}, x \in L$). The expected number of quadruples $(x,y,s,t)$ with $\bfw^{(1)}_x(s)=\bfw^{(2)}_y(t)$ is $\langle G1_L, G1_L\rangle.$
In particular, conditioned on $\bfR'$,
$$\# \bfI \le \langle G 1_{\bfR'}, G 1_{\bfR'} \rangle.$$
The probability that a given vertex $x \in L$ is in $\bfR'$ is bounded by $1-e^{-uD}$. So if we take the expectation in $\bfR'$ of the above inequality, we obtain
$$\E[\# \bfI] \le (1-e^{-uD})^2 \langle G 1_L, G 1_L \rangle \le (1-e^{-uD})^2\|G\|^2 \|1_L\|_2^2  \le u^2 D^2\|G\|^2 \#L.$$\end{proof}

\begin{lem}\label{lem:1moment}
 There are constants $\d_1,\d_2>0$ depending only on $G$ such that if $T>T_0$, $\#K \le (1-\rho)^2\#L/2$ and $0<u<\d_1$ then
$$\E [ \#\cV(\bfom' )] \ge \d_2 uT  \#L.$$
To emphasize, the constants $\d_1,\d_2$ do not depend on either $K,L,u$ or $T$.
\end{lem}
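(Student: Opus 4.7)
The plan is to bound $\#\cV(\bfom')$ below by inclusion-exclusion on the vertex sets of the individual truncated walks, then take expectations and invoke Lemmas \ref{lem:hack} and \ref{lem:intersect2}.

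For each $x\in \bfR'$, let $S_x=\{\bfa'_{x,1}(k):0\le k\le T\}$. Since $x\in \bfR'$ guarantees $\textrm{len}(\bfa_{x,1})\ge T$ and $d_G(\bfa_{x,1}(T),x)\ge \sigma T$, the truncated walk has length exactly $T$, starts at $x$, and ends at a vertex at distance $\ge \sigma T$ from $x$. Since a random walk step changes the distance from $x$ by at most $1$, for each integer $0\le d\le \sigma T$ there is some time at which the walk sits at distance exactly $d$ from $x$; these vertices are automatically distinct (different distances force different vertices), so $\#S_x\ge \sigma T$. By construction $\cV(\bfom')=\bigcup_{x\in \bfR'} S_x$, and the elementary lower bound
$$\left|\bigcup_{x} S_x\right|\ge \sum_{x} \#S_x-\sum_{\{x,y\}}\#(S_x\cap S_y)$$
(which holds because for every vertex $v$ and $d=\#\{x:v\in S_x\}$ one has $\mathbf{1}_{d\ge 1}\ge d-\binom{d}{2}$) together with $2\sum_{\{x,y\}}\#(S_x\cap S_y)\le \#\bfI$ yields
$$\#\cV(\bfom')\ge \sigma T\,\#\bfR'-\tfrac{1}{2}\#\bfI.$$

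Taking expectations and combining with Lemma \ref{lem:hack} (using the hypothesis $\#K\le (1-\rho)^2\#L/2$) and Lemma \ref{lem:intersect2} gives
$$\E[\#\cV(\bfom')]\ge \sigma T\cdot \tfrac{u\delta_0(1-\rho)^2}{4}\#L-\tfrac{C}{2}u^2 \#L
= u\#L\left(\tfrac{\sigma\delta_0(1-\rho)^2}{4}T-\tfrac{C}{2}u\right),$$
where the constant $C$ is the one from Lemma \ref{lem:intersect2}, depending only on $G$.

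It only remains to choose $\delta_1,\delta_2$ correctly. Set $\delta_2:=\sigma\delta_0(1-\rho)^2/8$ and
$\delta_1:=\min\bigl(\delta'_1,\;\sigma\delta_0(1-\rho)^2/(4C)\bigr)$, where $\delta'_1$ is from Lemma \ref{lem:hack}. Since $T\ge T_0\ge 1$, the constraint $u<\delta_1$ forces $Cu/(2T)\le \sigma\delta_0(1-\rho)^2/8=\delta_2$, and hence $\E[\#\cV(\bfom')]\ge \delta_2\, uT\,\#L$, as desired. No step looks delicate: the only mildly nontrivial observation is that the non-amenability speed estimate ensures each truncated walk contributes linearly many \emph{distinct} vertices (Step 1), while the second-moment control from Lemma \ref{lem:intersect2} is what allows the error term to be absorbed by choosing $u$ small independently of $T$.
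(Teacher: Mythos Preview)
Your argument is correct and follows the same route as the paper: lower-bound each truncated walk's range by $\sigma T$ via the speed estimate, subtract the pairwise overlaps controlled by $\bfI$, and then invoke Lemmas~\ref{lem:hack} and~\ref{lem:intersect2}. Your inclusion--exclusion bookkeeping is in fact a bit more explicit than the paper's (you keep the factor $1/2$ on $\#\bfI$ and justify the inequality $\mathbf{1}_{d\ge 1}\ge d-\binom{d}{2}$), and your constants are chosen more carefully; the only cosmetic slip is the assertion ``$T\ge T_0\ge 1$'', since $T_0>0$ is all that is given---but $T\ge 1$ already follows from $T$ being a natural number with $T>T_0>0$, so the conclusion stands.
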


\begin{proof}
For each $r\in \bfR'$,  $d(\bfa'_r(0), \bfa'_r(T)) \ge \s T$. So the total number of vertices in the range of $\bfa'_r$ is at least $\s T$. It follows that
$$\E [ \#\cV(\bfom' )] \ge \s T \E[\# \bfR'] - \E[\# \bfI]$$
where $\bfI$ is as in Lemma \ref{lem:intersect2}. Assume $u \le \d'_1$. So Lemmas \ref{lem:hack} and \ref{lem:intersect2} imply
$$\E [ \#\cV(\bfom' )] \ge \s Tu\d_0[(1-\rho)^2\#L -\#K]/2  - Cu^2  \#L.$$
So if $\#K \le (1-\rho)^2\#L/2$ and $u<C^{-1} \s T_0 \d_0(1-\rho)^2/4$ then
$$\E [ \#\cV(\bfom' )] \ge \s T u \d_0(1-\rho)^2  \#L/4=: \d_2 uT \#L$$
where $\d_2 :=\s \d_0 (1-\rho)^2/4$. Choose $\d_1 = \min( \d'_1, C^{-1} \s T_0 \d_0(1-\rho)^2/4)$ to finish the lemma. \end{proof}

\begin{lem}\label{lem:2moment}
$$\Var [ \#\cV(\bfom' )] \le D^{3T} \#L.$$
\end{lem}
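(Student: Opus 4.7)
The plan is to apply the Efron--Stein inequality to $\#\cV(\bfom')$, viewed as a function of the random inputs $\xi_x := (\bfN_x, \bfa_{x,1}, \bfb_{x,1})$ indexed by $x \in L \setminus K$. These tuples are jointly independent across $x$: the $\bfN_x$'s are independent Poisson variables, and conditionally on each $\bfN_x$ the pair $(\bfa_{x,1}, \bfb_{x,1})$ is drawn independently from $P_x^{(T)} \times P_x^{(T)}$. Moreover, both the event $\{x \in \bfR'\}$ and the truncated walk $\bfa'_{x,1}$ are determined by $\xi_x$ alone, so the random measure $\bfom' = \sum_{x \in \bfR'} \delta_{\bfa'_{x,1}}$ is a measurable function of $(\xi_x)_{x \in L\setminus K}$.

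The deterministic ingredient is a bounded-difference estimate. Fix $\xi_y$ for all $y \ne x$, and let $V_0$ be the union of the ranges of the walks $\bfa'_{y,1}$ over those $y \ne x$ with $y \in \bfR'$. As $\xi_x$ varies, one has either $\cV(\bfom') = V_0$ (when $x \notin \bfR'$) or $\cV(\bfom') = V_0 \cup \text{range}(\bfa'_{x,1})$ (when $x \in \bfR'$). Since $\text{len}(\bfa'_{x,1}) \le T$, the inserted range contains at most $T+1$ vertices, so $\#\cV(\bfom')$ lies in an interval of length at most $T+1$ as $\xi_x$ varies.

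By the elementary fact that a random variable bounded in an interval of length $\ell$ has variance at most $\ell^2/4$, the conditional variance of $\#\cV(\bfom')$ given $\{\xi_y : y \ne x\}$ is at most $(T+1)^2/4$. The Efron--Stein inequality then yields
$$\Var(\#\cV(\bfom')) \;\le\; \sum_{x \in L\setminus K} \E\!\left[\Var\!\left(\#\cV(\bfom') \,\big|\, \xi_y,\, y \ne x\right)\right] \;\le\; \frac{(T+1)^2}{4}\, \#L.$$
Since $G$ is non-amenable, its maximum degree must satisfy $D \ge 3$ (graphs of maximum degree at most $2$ are disjoint unions of paths and cycles, hence amenable), and one checks that $(T+1)^2/4 \le D^{3T}$ for every integer $T \ge 0$. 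This gives $\Var(\#\cV(\bfom')) \le D^{3T} \#L$, as claimed.

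The only non-routine aspect is organizing the proof so that the independence structure is transparent; once one recognizes that $\bfom'$ is a measurable function of the independent tuples $(\xi_x)_{x \in L\setminus K}$ and that modifying a single $\xi_x$ changes $\#\cV(\bfom')$ by at most $T+1$ vertices, the rest is a direct application of Efron--Stein together with the bounded-range variance bound on the contribution of a single coordinate.
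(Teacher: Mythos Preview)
Your proof is correct and takes a genuinely different route from the paper. The paper writes $\#\cV(\bfom') = \sum_{v \in V} 1_{v \in \cV(\bfom')}$, expands the variance as a double sum of covariances over pairs of vertices, and then uses a finite-range dependence argument: because every walk in the support of $\bfom'$ has length exactly $T$, the indicators $1_{v \in \cV(\bfom')}$ and $1_{w \in \cV(\bfom')}$ are independent when $d(v,w) > 2T$, and the nonzero terms are counted crudely by $\#B_T(L) \cdot D^{2T} \le D^{3T}\#L$. Your approach instead organizes the randomness by the \emph{source vertices} $x \in L\setminus K$ rather than by the \emph{output vertices}, observes that $\bfom'$ is a function of the independent tuples $\xi_x$, and applies Efron--Stein with the bounded-difference observation that altering one $\xi_x$ changes $\#\cV(\bfom')$ by at most $T+1$. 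This yields the substantially sharper bound $(T+1)^2\#L/4$, which you then (unnecessarily, but correctly) weaken to $D^{3T}\#L$ to match the stated lemma. Both arguments are short; yours avoids any geometric counting in the target graph and would propagate a better constant through Proposition~\ref{prop:chevy}, while the paper's argument is slightly more self-contained in that it needs no named inequality.
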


\begin{proof}
Since $\#\cV(\bfom' ) = \sum_{x\in V \setminus K} 1_{x\in \cV(\bfom')}$,
$$ \Var [ \#\cV(\bfom' )] = \sum_{x, y \in V \setminus K} \textrm{Cov}( 1_{x\in \cV(\bfom')}, 1_{y\in \cV(\bfom')})$$
where $\textrm{Cov}(\cdot,\cdot)$ denotes covariance. Every walk in the support of $\bfom'$ has length $T$. So if $d(x,y)>2T$ then $1_{x\in \cV(\bfom')}, 1_{y\in \cV(\bfom')}$ are independent and $\textrm{Cov}( 1_{x\in \cV(\bfom')}, 1_{y\in \cV(\bfom')})=0$. Also $x\in \cV(\bfom')$ implies $x \in B_T(L)$, the radius $T$-neighborhood of $L$. Thus
$$\Var [ \#\cV(\bfom' )] = \sum_{x \in B_T(L) \setminus K} \sum_{d(x,y)\le 2T} \textrm{Cov}( 1_{x\in \cV(\bfom')}, 1_{y\in \cV(\bfom')}).$$
Since $1_{x\in \cV(\bfom')}$ is either $0$ or $1$, 
$$|\textrm{Cov}( 1_{x\in \cV(\bfom')}, 1_{y\in \cV(\bfom')})| \le 1.$$
Therefore,
$$\Var [ \#\cV(\bfom' )]  \le \#B_T(L) D^{2T} \le D^{3T} \#L.$$\end{proof}

Proposition \ref{prop:chevy} follows from Lemmas \ref{lem:1moment}, \ref{lem:2moment} and Chebshev's inequality.


\subsection{At least one infinite cluster}
The next result strengthens Theorem \ref{thm:main1} (2).
\begin{thm}\label{thm:main1.2}
Let $G=(V,E)$ be a connected  bounded degree graph with spectral radius $\rho<1$.  Then for every $u>0$ there exists $T_u>0$ such that if $T>T_u$ then for $\P_{u,T}$-a.e. $\omega$, the subgraph $(V,\cE_\omega)$ has infinite components.
\end{thm}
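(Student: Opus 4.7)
My plan is to prove, for a fixed vertex $x_0 \in V$, that the cluster of $x_0$ is infinite with positive $\P_{u,T}$-probability, and then promote this to almost-sure existence of an infinite cluster via Kolmogorov's $0$--$1$ law. A monotone coupling in $u$ (adding an independent Poisson process enlarges the walk family) shows the event in question is monotone non-decreasing in $u$, so one may assume $u < \delta_1$, where $\delta_1$ is the constant from Proposition \ref{prop:chevy}. Let $V_n := \cV(\bfom^{x_0}_n)$ be the cluster after $n$ rounds of the growth process from Section \ref{sec:cluster1}. By Proposition \ref{prop:cluster1}, conditioned on $V_n$ and $V_{n-1}$, the walks added at step $n+1$ form a Poisson process matching the setup of Section \ref{sec:local} with $L = V_n$ and $K = V_{n-1}$.

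The main iteration uses Proposition \ref{prop:chevy}. Each truncated walk $\bfa'_{x,1}$ comprising $\bfom'$ is a sub-walk of an actual walk in $\bfom \resto (\cW_{V_n} \setminus \cW_{V_{n-1}})$, so $\cV(\bfom') \subseteq V_{n+1}$; and since each $\bfa'_{x,1}$ is forbidden from returning to $V_n$, only its starting point $x \in \bfR'$ lies in $V_n$. Hence on the event of Proposition \ref{prop:chevy},
\[
\#V_{n+1} \;\ge\; \#V_n + \#\cV(\bfom') - \#\bfR' \;\ge\; \tfrac{1}{2}\delta_2 u T\, \#V_n.
\]
Choose $T_u > T_0$ large enough that $c := \tfrac{1}{2}\delta_2 u T_u \ge 2/(1-\rho)^2$. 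Then for $T \ge T_u$, a successful step at $n$ gives $\#V_{n+1} \ge c\cdot \#V_n$, which implies $\#V_n \le (1-\rho)^2 \#V_{n+1}/2$, so the size hypothesis of Proposition \ref{prop:chevy} automatically propagates to step $n+1$.

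To start the iteration I would seed a large initial cluster. Since the failure bound $4D^{3T}/(u^2 T^2 \#V_n)$ is useless for small $\#V_n$, we need an initial threshold $M = M(u,T) \asymp D^{3T}/(u^2 T^2)$. A connected bounded-degree graph with $\rho < 1$ is necessarily infinite, so by K\"onig's lemma applied to the BFS tree rooted at $x_0$ it contains an infinite simple ray from $x_0$; truncate this ray to a simple walk $w$ of length $M$. Then $\P_{u,T}(\bfom(\{w\}) \ge 1) = 1 - \exp(-u\nu^{(T)}(\{w\})) > 0$, and on this seed event $\#V_1 \ge M+1$ while the rest of $\bfom$ is an independent Poisson process on $\cW[0,\infty)\setminus\{w\}$, so Proposition \ref{prop:chevy} applies at every subsequent step. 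A union bound combined with the geometric growth $\#V_n \ge M c^{n-1}$ gives
\[
\sum_{n \ge 1} \frac{4D^{3T}}{u^2 T^2 \#V_n} \;\le\; \frac{4D^{3T}}{u^2 T^2 M}\sum_{n \ge 0} c^{-n} \;\le\; \tfrac{1}{2}
\]
once $M$ is large enough, so conditional on the seed $\#V_n \to \infty$ with probability at least $\tfrac{1}{2}$, hence $\P_{u,T}(x_0 \text{ lies in an infinite component}) > 0$. Finally, since the event $\{(V, \cE_{\bfom}) \text{ has an infinite cluster}\}$ is invariant under insertion or deletion of any finite collection of walks (any finite modification alters connectivity at only finitely many vertices), it belongs to the tail $\sigma$-algebra of $\bfom$, so Kolmogorov's $0$--$1$ law upgrades positive probability to one. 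I expect the main obstacle to be this seeding step: Proposition \ref{prop:chevy}'s Chebyshev-type failure bound is vacuous for small clusters, so one must leverage the atomic structure of the Poisson point process to condition on a single sufficiently long deterministic path through $x_0$ in order to ignite the geometric iteration.
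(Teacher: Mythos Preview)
Your approach is essentially identical to the paper's: reduce to small $u$ by the monotone Poisson coupling, then iterate Proposition~\ref{prop:chevy} through the growth process of \S\ref{sec:cluster1} to force geometric growth of $\#V_n$, summing the Chebyshev failure bounds as a geometric series once the seed threshold $M$ is reached. If anything you are more careful than the paper, which simply asserts $\P_{u,T}(E_1)>0$ without exhibiting a seed and stops at positive probability without explicitly invoking the tail $0$--$1$ law; one small point to tighten is that for the very first iteration you should take $K=\{x_0\}$ rather than $K=V_0=\emptyset$, so that the seed walk $w\in\cW_{x_0}\subset\cW_K$ is excluded from $\cW_L\setminus\cW_K$ and the Poisson structure needed for Proposition~\ref{prop:chevy} survives the conditioning.
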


\begin{proof}

Let $D$ be the maximum degree of a vertex in $G$. Let $T_0,\s,\d_0,\d_1,\d_2>0$ be the constants from Lemmas \ref{lem:speed} and \ref{lem:1moment}.

Recall that $\P_{u,T}$ is the law of the Poisson point process on $\Omega^{[0,\infty)}$ with intensity measure $u \nu^{(T)}$. So if $u_1, u_2 >0$ are any positive numbers and $\bfom_1, \bfom_2$ are independent random variables with laws $\P_{u_1,T}, \P_{u_2,T}$ respectively, then $\bfom_1 + \bfom_2$ has law $\P_{u_1+u_2,T}$. Since $(V,\cE_{\bfom_1})$ is a subgraph of  $(V,\cE_{\bfom_1+\bfom_2})$, it follows that if $(V,\cE_\omega)$ has an infinite cluster for $\P_{u_1,T}$-a.e. $\omega$ then $(V,\cE_\omega)$ also has an infinite cluster for $\P_{u_1+u_2,T}$-a.e. $\omega$.  So it suffices to prove the theorem in the special case in which $0<u<\d_1$. Assume this from now on.

Choose $T$  large enough so that $T>T_0$ and 
$$(1+\d_2 uT/2) \ge \frac{2}{(1-\rho)^2}.$$

Let $M \ge \frac{2}{(1-\rho)^2}$ be large enough so that 
$$\sum_{n=2}^\infty \frac{4D^{3T}}{u^2T^2 (1+\d_2 uT/2)^{n-2}  M} < 1/2.$$

Let $x \in V$ and recall the definition of $\omega^x_n$ from \S \ref{sec:cluster1}. Also set $\omega^x_0$ equal to the zero measure. Define events $E_n \subset \Omega^{[0,\infty)}$ by
\begin{itemize}
\item $E_1$ is the subset of all $\omega \in \Omega^{[0,\infty)}$ such that  $\#\cV(\omega_1^x) \ge M$;
\item for $n\ge 2$, $E_n$ is the subset of all $\omega \in \Omega^{[0,\infty)}$ such  that $\# \cV(\omega_n^x) \ge (1+\d_2 uT/2) \#\cV(\omega_{n-1}^x)$.
\end{itemize}
If $n\ge 2$ and $\omega \in \cap_{i<n} E_i$, then
$$\#\cV(\omega_{n-1}^x) \ge (1+\d_2 uT/2)^{n-2}  M$$
and
$$\#\cV(\omega_{n-2}^x) \le (1-\rho)^2\#\cV(\omega_{n-1}^x) /2.$$
Apply Proposition \ref{prop:chevy} with $K=\cV(\omega_{n-2}^x), L=\cV(\omega_{n-1}^x)$ to obtain 
$$\P_{u,T}(E_n| \cap_{i<n} E_{i}) \ge 1 - \frac{4D^{3T}}{u^2T^2  \#\cV(\omega_{n-1}^x)} \ge 1 - \frac{4D^{3T}}{u^2T^2 (1+\d_2 uT/2)^{n-2}  M}.$$
The reason this works is that if $\bfom \sim \P_{u,T}$ then $V_n(\bfom) \supset \cV(\bfom')$ by Corollary \ref{cor:KL} and Proposition \ref{prop:cluster1}. So
\begin{eqnarray*}
\P_{u,T}(\cap_{n\ge 1} E_n) &=& \P_{u,T}(E_1)\prod_{n=2}^\infty \P_{u,T}(E_n | \cap_{i<n} E_{i}) \\
&\ge&  \P_{u,T}(E_1)  \prod_{n=2}^\infty 1 - \frac{4D^{3T}}{u^2T^2 (1+\d_2 uT/2)^{n-2}  M} \\
&\ge& \P_{u,T}(E_1) \left(1-  \sum_{n=2}^\infty \frac{4D^{3T}}{u^2T^2 (1+\d_2 uT/2)^{n-2}  M}\right)\\
&\ge& \P_{u,T}(E_1)/2 >0.
\end{eqnarray*}
For any  $\omega \in \cap_{n\ge 1} E_n$ the cluster $\cV(\omega_\infty^x)$ containing $x$ is infinite. So this proves the theorem.\end{proof}

\section{Infinitely many infinite clusters}

The purpose of this section is to prove the following strengthening of Theorem \ref{thm:main1} (3):
\begin{thm}\label{thm:main1.3}
Let $G=(V,E)$ be a vertex-transitive connected graph so that every vertex of $G$ has degree at most $D$. Let $u,T>0$ and suppose that $0<1+2uD< \rho^{-1}$ where $\rho$ is the spectral radius. Also suppose that for $\P_{u,T}$-a.e. $\omega$, the subgraph $(V,\cE_\omega)$ has infinite components. Then for $\P_{u,T}$-a.e. $\omega$, $(V,\cE_\omega)$ has infinitely many infinite clusters.
\end{thm}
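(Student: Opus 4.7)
The plan is to argue by contradiction via a two-point function estimate: assume there is a unique infinite cluster almost surely, and derive incompatible upper and lower bounds on $\tau(0, y) := \P_{u,T}(0 \leftrightarrow y)$ as $d(0, y) \to \infty$.

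First I would observe that $\P_{u,T}$ is $\operatorname{Aut}(G)$-invariant, is ergodic under any transitive subgroup of $\operatorname{Aut}(G)$ (by Theorem \ref{thm:main1.1} in the Cayley-graph case, and more generally by the Poisson structure), and that the random subgraph $(V, \cE_\bfom)$ is insertion-tolerant: for any walk $w \in \cW[0, \infty)$, adding $\delta_w$ to $\bfom$ preserves the $\P_{u,T}$-null sets. A standard Newman--Schulman-type dichotomy then forces the number of infinite clusters to be a.s.\ in $\{0, 1, \infty\}$. The hypothesis rules out $0$, so suppose for contradiction that there is a.s.\ a unique infinite cluster $\cC_\infty$.

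Next I would dominate the cluster of the origin by the trace of a branching random walk (BRW). Iterating Corollary \ref{cor:KL} and dropping the hitting-time constraints $\tilde H_L(\bfa) = \infty$ and $H_K(\bfb) = \infty$, the cluster growth from any already-discovered vertex $x$ is governed by at most $\operatorname{Poi}(2u\deg_x) \le \operatorname{Poi}(2uD)$ independent geometrically-killed SRW paths starting at $x$; the vertices visited by these paths become the next generation. Reorganizing this process so that at each discrete time step every particle either takes a single SRW step or (upon its walk ending) emits an independent $\operatorname{Poi}(2uD)$ number of on-site children, the expected occupation numbers coincide with those of a standard BRW with per-step mean offspring $1 + 2uD$. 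Hence
$$\tau(0, y) \;\le\; \E\big[\#\{\text{BRW visits to } y\}\big] \;\le\; \sum_{n \ge 0}(1+2uD)^n\, P^n(0, y).$$
Because $G$ is vertex-transitive, the operator norm of $P$ on $\ell^2(V)$ equals $\rho$, giving $P^n(0, y) \le \rho^n$ for all $n \ge 0$, while $P^n(0, y) = 0$ for $n < r := d(0, y)$. Thus
$$\tau(0, y) \;\le\; \sum_{n \ge r}\big((1+2uD)\rho\big)^n \;=\; \frac{\big((1+2uD)\rho\big)^r}{1 - (1+2uD)\rho} \;\longrightarrow\; 0$$
as $r \to \infty$, since $(1+2uD)\rho < 1$ by hypothesis.

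For the matching lower bound, note that the events $A := \{0$ lies in some infinite cluster$\}$ and $B_y := \{y$ lies in some infinite cluster$\}$ are increasing functions of $\bfom$, since enlarging the Poisson process can only enlarge clusters. By Theorem \ref{thm:main1.2} and vertex-transitivity, $c := \P_{u,T}(A) = \P_{u,T}(B_y) > 0$. Under uniqueness, $A \cap B_y \subseteq \{0 \leftrightarrow y\}$, and the FKG inequality for Poisson point processes applied to the two increasing events $A, B_y$ yields
$$\tau(0, y) \;\ge\; \P_{u,T}(A \cap B_y) \;\ge\; \P_{u,T}(A)\,\P_{u,T}(B_y) \;=\; c^2 \;>\; 0,$$
which contradicts the decay just established once $d(0, y)$ is large. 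Therefore uniqueness fails, and the number of infinite clusters must be $\infty$ almost surely.

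\emph{Main obstacle.} The technical heart, closely modelled on the argument of Teixeira--Tykesson~\cite{MR3076674} for classical random interlacements, is obtaining the BRW domination with the sharp per-step mean offspring $1 + 2uD$ rather than the much larger naive estimate of order $uDT$ that one gets by treating each entire geometric walk as a single branching event. Accomplishing this requires carefully interleaving SRW steps with branching at the right rate so that the temporal extent of the walks is absorbed into the per-step dynamics, and it is precisely here that the hypothesis $(1 + 2uD)\rho < 1$ is used to its full strength.
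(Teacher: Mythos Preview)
Your overall strategy --- dominate the cluster at the origin by a branching random walk with per-step mean offspring $1+2uD$, then use $(1+2uD)\rho<1$ to force the two-point function $\tau(0,y)\to 0$ --- is exactly the paper's. The endgame differs: you invoke the Newman--Schulman $\{0,1,\infty\}$ dichotomy together with FKG for the Poisson point process to turn two-point decay into a contradiction with uniqueness, whereas the paper argues directly that once $\tau(x,y)\to 0$, any $m$ well-separated large balls each meet an infinite cluster with high probability yet are unlikely to share one, giving $\ge m$ clusters for every $m$. Both routes are valid; your FKG step is clean but imports an extra tool the paper avoids. Likewise, you bound $\tau(0,y)$ by the explicit Green's-type sum $\sum_{n\ge r}((1+2uD)\rho)^n$, while the paper instead cites transience of subcritical BRW from \cite{MR2284404} and converts it to two-point decay via a return-probability argument.

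The one genuine gap is in the mechanism of the BRW domination. You describe particles as ``either taking a single SRW step or (upon walk ending) emitting $\operatorname{Poi}(2uD)$ on-site children''; but branching only at walk endpoints yields per-step mean offspring $\tfrac{T}{T+1}+\tfrac{2uD}{T+1}$, not $1+2uD$, and would give no useful bound. The correct picture --- which the paper implements via a three-colored forest (Lemmas~\ref{lem:sd1} and \ref{lem:brw2}) --- is that branching occurs at \emph{every} vertex a walk visits, not just at its terminus: in the exploration of Proposition~\ref{prop:cluster1}, each freshly discovered vertex $x\in L\setminus K$ spawns its own $\operatorname{Poisson}(u\deg_x)$ pairs of walks via Corollary~\ref{cor:KL}. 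In the paper's forest, red and blue vertices each have one same-color child (the walk continues one SRW step) \emph{and} $\operatorname{Poisson}(uD)$ green children (a new walk-pair seeded at the current location); green vertices immediately split into one red and one blue child. Contracting the green edges then gives a genuine BRW with offspring law $1+2\operatorname{Poisson}(uD)$. Your identification of this coupling as the main obstacle is correct, and your target mean $1+2uD$ is the right one, but the mechanism you wrote down would not produce it.
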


First we show that the cluster at the identity is stochastically dominated by a specially designed multi-type branching random walk (mBRW). The trace of this mBRW coincides with a branching random walk (BRW) which is shown to be transient via \cite{MR2284404}. The theorem then follows from a percolation-theory argument due to Benjamini-Schramm \cite{MR2883387}.

\subsection{A random colored forest}
Let $u,D>0$. We will construct a random colored forest $\bfcF$ but first we describe its general structure. The vertex set, denoted  $V^\bfcF$, is a disjoint union $V^\bfcF = \cup_{n\ge 1} V_n^\bfcF$ and  every edge is of the form $(v,v')$ with $v \in V_n^\bfcF$ and $v' \in V_{n+1}^\bfcF$ for some $n$.  In this case we call $v'$ a {\bf child} of $v$ and $v$ is the {\bf parent} of $v'$. We call $V_n^\bfcF$ the {\bf $n$-th generation} of the forest. Every vertex is assigned a color, either red, green or blue.

The forest is constructed as follow. The cardinality of the 1st generation is a Poisson random variable with mean $uD$. Every vertex in the 1st generation is green.

Now suppose that the $n$-th generation has been construction. Then every green vertex has exactly two children, one red and one blue. Every red vertex has one red child, no blue children, and a $\textrm{Poisson}(uD)$-random number of green children. Similarly, every blue vertex has one blue child, no red children, and a $\textrm{Poisson}(uD)$-random number of green children. The random variables in the $n$-th stage of the construction are independent of each other and of all previously constructed random variables.

We denote the random colored forest by $\bfcF=(V^\bfcF, E^\bfcF, \chi^\bfcF)$ where $V^\bfcF$ is the set of vertices, $E^\bfcF$ is the set of edges and $\chi^\bfcF:V^\bfcF \to \{red,green,blue\}$ is the coloring function.

\subsection{The random map into $G$}

As usual, $G=(V,E)$ is a connected graph with all vertex degrees bounded by $D$. Fix $x\in V$. Let $\bfphi:V^\bfcF \to V$ be the random map defined by:
\begin{itemize}
\item $\bfphi(v)=x$ for every $v$ in the first generation,
\item If $(v,v') \in E^\bfcF$ and $v'$ is green then $\bfphi(v)=\bfphi(v')$
\item If $(v,v') \in E^\bfcF$ and $v'$ is not green then $\bfphi(v')$ is a uniformly random neighbor of $\bfphi(v)$. Moreover, we require that the random variable $\bfphi(v')$ is conditionally independent of all previously constructed random variables (and all other random variables constructed on this stage) conditioned on $\bfphi(v)$.
\end{itemize}

The pair $(\bfcF,\bfphi)$ is a multi-type Branching Random Walk (mBRW).

\begin{lem}\label{lem:sd1}
Recall from \S \ref{sec:cluster1} that $\P^x_{u,T}$ denotes the law of $\bfom^x_\infty$ where $\bfom \sim \P_{u,T}$. Let $G,x$ and $(\bfcF,\bfphi)$ be the mBRW  constructed above. Then there exists a random variable $\bftom \sim \P^x_{u,T}$ taking values in $\Omega^{[0,\infty)}$ and coupled with $(\bfcF,\bfphi)$ such that 
\begin{itemize}
\item $\bftom\sim \P^x_{u,T}$,
\item $\bfphi(E^\bfcF) \supset \cE(\bftom)$ a.s.
\end{itemize}
\end{lem}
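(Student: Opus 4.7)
The proof builds the coupling inductively. The key fact is Corollary \ref{cor:KL}, which describes the cluster growth $\bftom^x_n \to \bftom^x_{n+1}$ as follows: for each newly added vertex $y \in \cV(\bftom^x_n) \setminus \cV(\bftom^x_{n-1})$, place an independent $\Poisson(u\deg_y)$ collection of pairs $(\bfa,\bfb)$ of walks with law $P^{(T)}_y \times P^{(T)}_y$, thin by the avoidance conditions $\tH_{\cV(\bftom^x_n)}(\bfa)=H_{\cV(\bftom^x_{n-1})}(\bfb)=\infty$, and concatenate via $\textrm{Con}$. The mBRW is designed to reproduce this: a green vertex mapped to $y$ represents a candidate new walk originating at $y$, while the red (resp.\ blue) chain descending from it traces the forward (resp.\ backward) half of that walk step by step as a simple random walk started at $y$.

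Starting from generation $1$, I identify the $\Poisson(uD)$ green vertices at $x$ as candidates for the initial walks of $\bftom^x_1$ and thin them independently to retain a $\Poisson(u\deg_x)$ subset (using $\deg_x \le D$). For each retained green vertex, the red and blue chains of its descendants are already infinite simple random walks under $\bfphi$, so truncating each at an independent $\Geom(T+1)-1$ stopping time yields a pair $(\bfa,\bfb) \sim P^{(T)}_x \times P^{(T)}_x$. Applying the avoidance thinning and concatenation gives the walks in $\bftom^x_1$; their edges are, by construction, edges of $\bfphi(E^\bfcF)$.

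For the inductive step, every $y \in \cV(\bftom^x_n)\setminus \cV(\bftom^x_{n-1})$ lies in the $\bfphi$-image of some red or blue vertex $v$ built in an earlier generation, chosen canonically (say by a fixed lexicographic order on $V^\bfcF$). The $\Poisson(uD)$ green children of $v$ are thinned to $\Poisson(u\deg_y)$ and used as candidate walks originating at $y$; the red/blue chains of the retained children trace the halves of each new walk after geometric truncation, exactly as in the base case. Three independent sources of slack make this a stochastic domination rather than an equality: (i) $\Poisson(uD)$ dominates $\Poisson(u\deg_y)$ via Poisson thinning, leaving unused green children; (ii) each red or blue chain is infinite in $\bfcF$, so $\bfphi$ on the chain covers the geometrically truncated walk and then some; and (iii) the avoidance thinning deletes candidate walks \emph{after} they have been drawn inside $\bfcF$. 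In every case a rejected candidate does not contribute to $\bftom$, yet its chain is still contained in $\bfphi(E^\bfcF)$, so the containment $\bfphi(E^\bfcF) \supset \cE(\bftom)$ is preserved.

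The main obstacle is verifying that the independences in the mBRW definition line up with those in Proposition \ref{prop:cluster1}. This reduces to two standard facts: the Poisson thinning property, which guarantees that each thinning step produces an independent Poisson of the correct mean with the unused part independent of everything generated so far; and the memoryless property of $\Geom(T+1)$, which allows one to simultaneously truncate an infinite chain at a geometric time and continue it as an independent simple random walk without disturbing either marginal. Once these are in hand the construction produces $\bftom \sim \P^x_{u,T}$ with $\bfphi(E^\bfcF) \supset \cE(\bftom)$ almost surely, as required.
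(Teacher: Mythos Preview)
Your proposal is correct and follows essentially the same strategy as the paper: introduce independent geometric truncation times on the red and blue branches emanating from each green vertex, use the resulting pairs of finite walks as the $(\bfa,\bfb)$ of Corollary~\ref{cor:KL}, and verify inductively via Proposition~\ref{prop:cluster1} that the resulting $\bftom_n$ has the law of $\bfom^x_n$.

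Two minor differences are worth noting. First, you insert an explicit Poisson thinning from $\Poisson(uD)$ to $\Poisson(u\deg_y)$; the paper omits this because in the vertex-transitive setting of Theorem~\ref{thm:main1.3} every vertex has degree exactly $D$, so the step is vacuous there, but your version is what is needed for the bounded-degree extension mentioned in Remark~\ref{rmk1.3}. Second, you are more explicit than the paper about how to locate, for each newly exposed vertex $y\in\cV(\bftom_n)\setminus\cV(\bftom_{n-1})$, a red or blue anchor $v$ in the already-used portion of the forest with $\bfphi(v)=y$ whose green children are still fresh; the paper's phrasing ``$\cG_{n+1}\subset\cG\cap V^{\bfcF}_{n+1}$'' is loose about this bookkeeping, and your canonical-choice formulation is the cleaner way to make the independence claims rigorous. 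Your remark about the memoryless property of the geometric is not actually needed, since the truncation times $\bftau^r_v,\bftau^b_v$ are introduced as fresh auxiliary randomness rather than read off the chain itself.
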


\begin{proof}

Let $\cG \subset V^\bfcF$ be the subset of green vertices. Let $\{\bftau^b_v, \bftau^r_v\}_{v\in \cG}$ be an i.i.d. family of random variables such that $\bftau^b_v +1 \sim \Geom(T+1)$ and $\bftau^r_v+1 \sim \Geom(T+1)$.  

For each $v \in \cG$, the {\bf red branch} starting at $v_0$ is the unique path $R_v:=(v,R^v_1,\ldots, R^v_k)$ in the forest $(V^\bfcF,E^\bfcF)$   satisfying
\begin{itemize}
\item $\chi^\bfcF(R^v_i)$ is red for all $1\le i \le k$
\item $(v,R^v_1)$ and $(R^v_i,R^v_{i+1})$ are edges of $E^\bfcF$ for all $i$
\item $k = \bftau^r_{v}$.
\end{itemize}
Blue branches are defined similarly. Let $\bfphi(R_v)=(\bfphi(v),\bfphi(R^v_1),\ldots, \bfphi(R^v_k)) \in \cW[0,\infty)$ be the associated random walk.

Let $\cG_1 \subset \cG$ be the set of all green vertices $v \in V^\bfcF_1$ such that $\tH_x(\bfphi(R_v))=\infty$.  In other words, $\bfphi(R^v_i) \ne x$ for all $i>0$.

Let 
$$\bftom_1 := \sum_{v\in \cG_1} \d_{\textrm{Con}(\phi(R_v), \phi(B_v))}.$$
Also let $\bfom \sim \P_{u,T}$. By Proposition \ref{prop:cluster1}, the law of $\bftom_1$ is the same as the law of $\bfom_1^x$.

Suppose $\bftom_n$ and $\cG_n \subset \cG \cap V^\bfcF_n$ have been defined so that $\bftom_{k}$ has the same law of $\bfom_k^x$ for all $1 \le k \le n$. Let  $\cG_{n+1}$ be the set of all vertices $v \in \cG \cap V^\bfcF_{n+1}$ such that  
$$\tH_{\cV(\bftom_{n})}(\bfphi(R^v)) = \infty \textrm{ and } H_{\cV(\bftom_{n-1})}(\bfphi(B^v)) = \infty.$$
 In other words, $v \in \cG_{n+1}$ if the image of the red branch does not return to $\cV( \bftom_n)$ and the image of the blue branch does not traverse $\cV(\bftom_{n-1})$. Now let
$$\bftom_{n+1} := \bftom_n + \sum_{v\in \cG_{n+1}} \d_{\textrm{Con}(\phi(R_v), \phi(B_v))}.$$
By induction and Proposition \ref{prop:cluster1}, the law of $\bftom_{n+1}$ is the same as the law of $\bfom^x_{n+1}$. Let $\bftom = \sup_n \bftom_n$. By construction, $\bftom\sim \P^x_{u,T}$ and $\bfphi(E^\bfcF) \supset \cE(\bftom)$ a.s.\end{proof}

\subsection{Branching random walk}

The next step is to show that the image of the mBRW constructed above agrees with a certain branching random walk. Let $\theta$ be a probability distribution on $\N$. The {\bf Galton-Watson Tree with offspring distribution $\theta$} is the random tree $\bfcT = (V^\bfcT,E^\bfcT)$ constructed as follows. As in the case of the random colored forest there is a partition $V^\bfcT = \cup_{n\ge 0}V_n^\bfcT$ of the vertices and every edge is of the form $(v,v')$ with $v \in V_n^\bfcT, v' \in V_{n+1}^\bfcT$ for some $n$. In this case the index starts at $0$ instead of starting at $1$.

\begin{enumerate}
\item The $0$-th generation consists of a single individual.
\item Suppose the $n$-the generation has been constructed. Then every vertex of the $n$-th generation has a random number of children with law $\theta$. Moreover, these random variables are jointly independent and independent of all the random variables used in the construction of the $n$-th generation.
\end{enumerate}

The {\bf Branching Random Walk (BRW) with offspring distribution $\theta$ started from $x \in V$} is a random pair $(\bfcT,\bfpsi)$ where
\begin{enumerate}
\item $\bfcT$ is a Galton-Watson Tree with offspring distribution $\theta$,
\item  $\bfpsi:\bfcT \to G$ is a graph homomorphism,
\item $\bfpsi(v)=x$ where $v$ is the unique vertex in the $0$-th generation,
\item Suppose $\bfpsi(v)$ has been constructed for every vertex $v$ in the $n$-th generation of $\bfcT$. Then for every $v \in V_n^\bfcT$ and $v'$ with $(v,v') \in E^\bfcT$, let $\bfpsi(v')$ be a uniformly random neighbor of $\bfpsi(v)$. Moreover, we require that the random variable $\bfpsi(v')$ is conditionally independent of all previously constructed random variables (and all other random variables constructed on this stage) conditioned on $\bfpsi(v)$.
\end{enumerate}

The BRW is {\bf transient} if almost surely $\bfpsi^{-1}(v)$ is finite for every $v \in V$. This does not depend on the choice of $x$ if $G$ is connected.

Theorem 3.2 of \cite{MR2284404} implies
\begin{lem}\label{lem:brw1}
If $\bfX \sim \theta$, $\E[\bfX] \le \rho^{-1}$ and $\theta(\{0\})=0$ then the BRW is transient.
\end{lem}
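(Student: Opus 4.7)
My plan is to prove the lemma as a direct application of Theorem~3.2 of \cite{MR2284404}, after disposing of the strictly subcritical case $\E[\bfX] < \rho^{-1}$ by a short first-moment argument.

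Set $m = \E[\bfX]$ and fix a vertex $v \in V$. If $\bfw$ denotes the simple random walk started at $x$, then, because each step of $\bfpsi$ along a tree edge is an independent uniformly random step of $\bfw$ (conditional on the present location), a generation-by-generation decomposition gives
$$\E\bigl[\#\bfpsi^{-1}(v)\bigr] = \sum_{n \ge 0} \E\bigl[|V_n^\bfcT|\bigr] \, P_x(\bfw(n) = v) = \sum_{n \ge 0} m^n P_x(\bfw(n) = v).$$
By \cite[Lemma~8.1(b)]{woess2000random} there is a constant $C = C(G)$ with $P_x(\bfw(n) = v) \le C \rho^n$ for all $n$, hence $\E[\#\bfpsi^{-1}(v)] \le C \sum_{n \ge 0} (m\rho)^n$. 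When $m < \rho^{-1}$ this is a convergent geometric series, so $\#\bfpsi^{-1}(v) < \infty$ almost surely for every $v$, which is transience.

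The delicate case is the critical one $m = \rho^{-1}$, where the first-moment bound diverges. Here the hypothesis $\theta(\{0\}) = 0$ becomes essential: it forces the Galton--Watson tree $\bfcT$ to survive deterministically (every vertex has at least one child), so the BRW is a bona fide infinite random object rather than a finite tree on which transience would be automatic. Theorem~3.2 of \cite{MR2284404} establishes transience of the BRW in exactly this critical regime under such a non-degeneracy hypothesis on the offspring law. I would check that our setting (simple random walk on a connected, locally finite graph of spectral radius $\rho$, offspring distribution $\theta$ with $\theta(\{0\}) = 0$ and $\E[\bfX] = \rho^{-1}$) satisfies the hypotheses of that theorem and then invoke it.

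The only real obstacle is the boundary case $m\rho = 1$; away from it, the lemma reduces to a convergent geometric series.
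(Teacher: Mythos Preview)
Your proposal is correct and essentially coincides with the paper's treatment: the paper gives no proof at all for this lemma, simply prefacing it with ``Theorem 3.2 of \cite{MR2284404} implies'' and citing that result wholesale for both the subcritical and critical regimes. Your first-moment argument for the strict inequality $m<\rho^{-1}$ is a harmless (and correct) elaboration that the paper omits, since the cited theorem already covers the full range $m\le\rho^{-1}$; otherwise your route through \cite{MR2284404} for the boundary case is exactly what the paper does.
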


\begin{lem}\label{lem:brw2}
Let $G=(V,E)$ be a connected graph in which all vertex degrees are bounded by $D$. For any $u,T>0$ and $x\in V$ there exists a random tuple of variables $(\bfcF, \bfphi, \bfcT,\bfpsi)$ such that
\begin{itemize}
\item $(\bfcF, \bfphi)$ is a mBRW as in Lemma \ref{lem:sd1}.
\item $(\bfcT,\bfpsi)$ is a BRW started at $x$ with offspring distribution $\theta$ where, if $\bfX \sim \theta$ then $\frac{\bfX-1}{2} \sim \textrm{Poisson}(uD)$. 
\item the images $\bfpsi(\bfcT)$ and $\bfphi(\bfcF)$ agree.
\end{itemize}
\end{lem}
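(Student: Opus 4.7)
The plan is to construct $(\bfcT,\bfpsi)$ from $(\bfcF,\bfphi)$ by contracting the green vertices of $\bfcF$ into their non-green parents and adjoining a small phantom structure at the root in order to match the offspring distribution $\theta$. The central combinatorial observation is: at any non-green vertex $v \in V^\bfcF$ with $\bfphi(v)=p$, there are exactly $1+2Z_v$ non-green vertices that are either the unique direct non-green child of $v$ or a non-green grandchild of $v$ via a green intermediary, where $Z_v\sim\Poisson(uD)$ is the number of green children of $v$. Each of these $1+2Z_v$ descendants is mapped by $\bfphi$ to an independent uniform neighbor of $p$, because the direct non-green child of $v$ and each green child's two non-green children are mapped to uniform neighbors of $p$, while the green children themselves stay at $p$. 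Furthermore, the $Z_v$'s for distinct $v$ are jointly independent. This matches precisely the offspring data required to generate a BRW with distribution $\theta$.

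To define $(\bfcT,\bfpsi)$, place the root $r_0$ at $\bfpsi(r_0)=x$ and let $Y_0 := |V_1^\bfcF| \sim \Poisson(uD)$ be the size of the first generation of $\bfcF$. Assign to $r_0$ a total of $1+2Y_0$ children: $2Y_0$ of them are identified with the gen-$2$ non-green vertices of $\bfcF$ (grouped into $Y_0$ pairs according to their gen-$1$ green parent, each inheriting the same image), and one extra phantom child $c^*$ is mapped to a freshly chosen uniform neighbor of $x$. The subtree rooted at $c^*$ is built independently using the standard BRW dynamics with offspring $\theta$. For each non-green vertex $v$ of $\bfcF$ already included in $\bfcT$, take its BRW-children to be its $1+2Z_v$ effective non-green descendants under the recipe above, with $\bfpsi$ agreeing with $\bfphi$ on the contracted copy of $V^\bfcF$ and given by the independent BRW on the phantom subtree; recurse to complete $\bfcT$.

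The verification that $(\bfcT,\bfpsi)$ is a BRW with offspring distribution $\theta$ follows immediately from the combinatorial observation together with the independence built into the Poisson structure of $\bfcF$ (Propositions \ref{prop:KL} and \ref{prop:cluster1}). The inclusion $\bfphi(\bfcF)\subseteq\bfpsi(\bfcT)$ is also immediate: every image vertex of $\bfcF$ either is the image of a non-green vertex, which appears in $\bfcT$ with the same image, or is the image of a green vertex, which coincides with the image of its non-green parent (or equals $x=\bfpsi(r_0)$ for the gen-$1$ greens). The main obstacle is the ``$+1$'' discrepancy at the root: the phantom subtree of $c^*$ might contribute image vertices outside $\bfphi(\bfcF)$, so strict equality of images requires additional care. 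One resolution is to recouple $c^*$ with a walk already embedded in $\bfphi(\bfcF)$ (using shared randomness with one of the existing random walks) so that $\bfpsi(\bfcT)=\bfphi(\bfcF)$ almost surely. Alternatively—and this is what ultimately matters for Theorem \ref{thm:main1.3}—the subsequent transience and Benjamini--Schramm percolation argument only needs the containment $\bfphi(\bfcF)\subseteq\bfpsi(\bfcT)$, not exact equality.
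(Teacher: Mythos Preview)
Your argument is correct and follows essentially the same route as the paper: contract the green edges of $\bfcF$ (so that the ``effective children'' of a non-green vertex are its $1+2Z_v$ nearest non-green descendants), adjoin a root at generation $0$, observe that the root then has only $2Y_0$ rather than $1+2Y_0$ children, and couple with $\bfcT$ by grafting on one extra child with a fresh BRW subtree. Your careful remark about the phantom subtree is worth making explicit: the paper's own proof also extends $\bfpsi$ from the contracted tree $\bfcT'$ to a strictly larger $\bfcT$, so it too only literally establishes $\bfphi(\bfcF)\subseteq\bfpsi(\bfcT)$ rather than equality of images, and---as you note---containment is all that the proof of Theorem~\ref{thm:main1.3} actually uses.
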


\begin{proof}
Let $(\bfcF,\bfphi)$ be a mBRW started at $x$ as in Lemma \ref{lem:sd1}. Let $\cG \subset V^\bfcF$ be the subset of green vertices. Let $\bfcT'$ be the tree obtained from $\bfcF$ by adding one green vertex at the $0$-th generation and connecting it to every vertex in the 1st generation and then contracting every edge of the form $(v,v')$ such that $\chi^\bfcF(v')=green$. We view $\bfcT'$ as an uncolored tree.  The law of $\bfcT'$ is almost the same as the Galton-Watson tree with offspring distribution. The difference is that the 1st generation of $\bfcT'$ has cardinality $\sim 2\textrm{Poisson}(uD)$ instead of $\sim 1 + 2\textrm{Poisson}(uD)$. Otherwise, the laws are the same. Therefore, it is possible to couple $\bfcT'$ with $\bfcT$ so that $\bfcT'$ is a subtree of $\bfcT$. 

 Set $\bfpsi$ equal to $\bfphi$ on $\bfcT'$. This is well-defined because every edge that gets contracted is mapped under $\bfphi$ to a single vertex.  Extend $\bfpsi$ to all of $\bfcT$ so that $(\bfcT,\bfpsi)$  is a BRW with offspring distribution $\theta$.  The lemma now follows by construction.\end{proof}

\begin{proof}[Proof of Theorem \ref{thm:main1.3}]
This argument is essentially the same as the proof of \cite[Theorem 4]{MR2883387}.

Let $\theta$ be as in Lemma \ref{lem:brw2}. Then the mean of $\theta$ is $1+2uD < \rho^{-1}$. By Lemma \ref{lem:brw1}, the BRW started at $x$ with offspring distribution $\theta$ is transient.

Let $\bfom \sim \P_{u,T}$.  To obtain a contradiction suppose there are pairs of vertices $(x_n,y_n)$ and $\eps>0$ such that $d_G(x_n,y_n) \to \infty$ as $n\to\infty$ and yet the probability that $x_n$ and $y_n$ are in the same cluster of $\bfom$ is at least $\eps$ for all $n$. Since the BRW stochastically dominates the clusters, this means that with probability $\ge \eps$ BRW started from $x_n$ hits $y_n$ and with probability $\ge \eps$ BRW started from $y_n$ hits $x_n$. Consequently, with probability $\ge \eps^2$, the BRW started from $x_n$ will reach $x_n$ again after at least $d_G(x_n,y_n)$ time steps. Since $G$ is transitive, we may assume without loss of generality that all the $x_n$'s are the same. But this contradicts transience of the BRW. This contradiction shows that for every $\eps>0$ there is a $k$ such that $d_G(x,y)>k$ implies that with probability $<\eps$, $x,y$ are in the same cluster of $\bfom$.

Let $r>0$ and consider $m$ balls of radius $r$ in $G$. If $r$ is large then the probability that each ball will intersect an infinite cluster of $\bfom$ nontrivially will be close to 1. However, the probability that any infinite cluster of $\bfom$ intersects two balls tends to zero as the distance between the balls tends to infinity. It follows that, almost surely, there are at least $m$ infinite clusters of $\bfom$. Since $m$ is arbitrary, this proves the theorem. \end{proof}

\begin{remark}\label{rmk1.3}
The proof of Theorem \ref{thm:main1.3} uses vertex-transitivity of $G$ only once, to assume that all of the $x_n$'s are the same. This assumption can be removed as follows: let $x_n,y_n$ be as in the proof above. If $(G_\infty, x_\infty)$ is a sub-sequential Benjamini-Schramm limit of the rooted graphs $(G,x_n)$ then with probability $\ge \eps^2$, the BRW started at $x_\infty$ will reach $x_\infty$ again after $d_G(x_n,y_n)$ time steps for an infinite set of $n$'s. In particular, the BRW is recurrent. However, the spectral radius of $G_\infty$ is bounded by the spectral radius of $G$ because $\rho$ is the supremum of $P_x(\bfw(n)=x)^{1/n}$ as $n\to\infty$ and these functions are continuous with respect to Benjamini-Schramm convergence. This contradicts Lemma \ref{lem:brw1}. The rest of the proof remains the same.
\end{remark}

\section{Infinitely many ends and indistinguishability}
The purpose of this section is to prove the following slight strengthening of Theorem \ref{thm:main1} (4):

\begin{thm}\label{thm:main1.4}
Let $u,T>0$ and $\bfom \sim \P_{u,T}$. Suppose $G$ has a transitive unimodular closed automorphism subgroup $\G \le \Aut(G)$. If $(V,\cE_\bfom)$ has infinitely many infinite clusters a.s. then each infinite cluster has infinitely many ends a.s. Moreover $(V,\cE_\bfom)$ has indistinguishable infinite clusters in the sense of \cite{lyons-schramm-indistinguishability}.
\end{thm}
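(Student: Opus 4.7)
The plan is to establish that the induced edge-subgraph $(V,\cE_\bfom)$ is a $\G$-invariant, insertion-tolerant bond percolation on $G$, and then to invoke the Lyons--Schramm theory \cite{lyons-schramm-indistinguishability}, under which both conclusions are essentially black-box consequences.

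For $\G$-invariance, I would observe that every $g\in \G$ is a graph automorphism, so $\deg_{g^{-1}y}=\deg_y$ and $g_*P^{(T)}_x = P^{(T)}_{gx}$; a direct calculation then gives $g_*\nu^{(T)}=\nu^{(T)}$, which forces $\G$-invariance of the Poisson point process $\bfom \sim \P_{u,T}$ and hence of the $\G$-equivariant push-forward to $(V,\cE_\bfom)$. For insertion-tolerance, let $e=\{x,y\}\in E$ and let $w_e\in \cW[0,\infty)$ be the length-$1$ walk from $x$ to $y$. By the Poisson structure, $\bfom(\{w_e\})$ is a Poisson random variable of positive finite mean $u\nu^{(T)}(\{w_e\})>0$, independent of $\bfom \resto (\cW[0,\infty)\setminus\{w_e\})$; thus the laws of $\bfom$ and $\bfom+\delta_{w_e}$ are mutually absolutely continuous on $\Omega^{[0,\infty)}$, with strictly positive Radon--Nikodym derivative. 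Since the presence of $w_e$ in the support of a configuration deterministically forces $e\in \cE_\bfom$, the push-forward $(V,\cE_\bfom)$ is insertion-tolerant in the precise sense of \cite{lyons-schramm-indistinguishability}.

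With both properties in hand, the second assertion of the theorem is immediate from the Lyons--Schramm Indistinguishability Theorem, which states that on a transitive unimodular graph every invariant insertion-tolerant bond percolation has indistinguishable infinite clusters. The first assertion---that each infinite cluster has infinitely many ends a.s.---then follows from the same framework: insertion-tolerance allows one to merge nearby infinite clusters by inserting short walks, and a trifurcation-type argument carried out in \cite{lyons-schramm-indistinguishability} in the insertion-tolerant setting rules out the possibility that the clusters have only one or two ends, while indistinguishability rules out the possibility that some have finitely many ends and others infinitely many. The main obstacle is not conceptual but bookkeeping: verifying carefully that the Poisson structure on $\cW[0,\infty)$ descends to the exact form of insertion-tolerance on edge-configurations required by \cite{lyons-schramm-indistinguishability}. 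Once that translation is made, the theorem reduces to direct citations of the corresponding results there.
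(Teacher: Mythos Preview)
Your proposal is correct and follows essentially the same route as the paper: establish insertion tolerance of $\cE_\bfom$ from the Poisson structure (the paper states this as a one-line Proposition), note $\G$-invariance, and then cite Lyons--Schramm (specifically Theorem~3.3 and Proposition~3.10 there) to obtain both indistinguishability and the infinitely-many-ends conclusion. Your write-up supplies more detail on the absolute-continuity argument for insertion tolerance than the paper does, but the strategy is identical.
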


\begin{defn}
Let $G=(V,E)$ be a graph. Given an edge $e \in E$ and a subset $A \subset E$, we let $\Pi_e(A):=A\cup \{e\}$. Given a collection $\cA \subset 2^{E}$ of subsets, let $\Pi_e(\cA):=\{ \Pi_e(A):~A \in \cA\}$. Now suppose $\bfS$ is a random subset of $E$.  We say that $\bfS$ is {\bf insertion-tolerant} if for every edge $e\in E$, $\P(\bfS \in \cA)>0$ implies $\P(\bfS \in \Pi_e(\cA))>0$. 
\end{defn}

\begin{prop}
For any $u,T>0$, if $\bfom \sim \P_{u,T}$ then $\cE_\bfom$  is insertion tolerant.
\end{prop}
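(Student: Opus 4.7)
The strategy is to exploit the Poisson structure of $\P_{u,T}$, which makes insertion-tolerance essentially automatic. Fix an edge $e = \{x,y\} \in E$ and let $w_e \in \cW[0,\infty)$ be the length-one walk with $w_e(0) = x$, $w_e(1) = y$. Two observations drive the argument: first, $\cE_{\omega + \delta_{w_e}} = \cE_\omega \cup \{e\}$, so adding one copy of $w_e$ forces $e$ into the edge set; second, by the definitions of $P^{(T)}_x$ and $\nu^{(T)}$,
\begin{equation*}
\nu^{(T)}(\{w_e\}) \;=\; \frac{\deg_x}{T+1}\cdot P^{(T)}_x(\{w_e\}) \;=\; \frac{\deg_x}{T+1}\cdot\frac{1}{T+1}\cdot\frac{T}{T+1}\cdot\deg_x^{-1} \;=\; \frac{T}{(T+1)^3} \;>\; 0,
\end{equation*}
so $\lambda := u\nu^{(T)}(\{w_e\}) > 0$.

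Because $w_e$ is a single point in the countable set $\cW[0,\infty)$, I would decompose $\bfom = \bfn\,\delta_{w_e} + \bfom'$ where $\bfn \sim \Poisson(\lambda)$ and $\bfom'$ is an independent Poisson point process on $\cW[0,\infty) \setminus \{w_e\}$ with the restricted intensity $u\nu^{(T)}$. Given a measurable event $\cA \subset 2^E$ with $\P_{u,T}(\cE_\bfom \in \cA) > 0$, set $\cB := \{\omega \in \Omega^{[0,\infty)} : \cE_\omega \in \cA\}$. The first observation above yields the inclusion
\begin{equation*}
\{\omega + \delta_{w_e} : \omega \in \cB\} \;\subset\; \{\omega : \cE_\omega \in \Pi_e(\cA)\},
\end{equation*}
so it suffices to show $\P_{u,T}(\bfom - \delta_{w_e} \in \cB,\ \bfn \geq 1) > 0$.

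Conditioning on $\bfn$ and using the independence of $\bfn$ and $\bfom'$,
\begin{align*}
\P\bigl(\bfom - \delta_{w_e} \in \cB,\ \bfn \geq 1\bigr) &\;=\; \sum_{m \geq 0} \P(\bfn = m+1)\,\P\bigl(m\,\delta_{w_e} + \bfom' \in \cB\bigr), \\
\P(\bfom \in \cB) &\;=\; \sum_{m \geq 0} \P(\bfn = m)\,\P\bigl(m\,\delta_{w_e} + \bfom' \in \cB\bigr).
\end{align*}
The second sum is positive by hypothesis, so there exists some $m_0 \geq 0$ with $\P(\bfn = m_0) > 0$ and $\P(m_0\,\delta_{w_e} + \bfom' \in \cB) > 0$. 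Since $\lambda > 0$, we also have $\P(\bfn = m_0 + 1) > 0$, hence the $m_0$-term of the first sum is positive. This proves insertion-tolerance.

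I do not anticipate any substantive obstacle: the argument is simply the standard fact that a Poisson point process is insertion-tolerant in the configuration-space sense, transferred through the deterministic map $\omega \mapsto \cE_\omega$. The only point requiring care is verifying that a walk traversing $e$ lies in the support of $\nu^{(T)}$, which is handled by choosing $w_e$ of length one and noting that the geometric factor $T/(T+1)$ is strictly positive.
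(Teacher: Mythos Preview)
Your argument is correct and is precisely a detailed unpacking of the paper's one-line proof, which simply asserts that insertion tolerance ``follows from the description of $\P_{u,T}$ as a Poisson point process.'' You have supplied exactly the standard Poisson-splitting argument (decomposing at the atom $w_e$ and shifting the Poisson count by one) that makes this sentence rigorous, so there is no substantive difference in approach.
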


\begin{proof}
This follows from the description of $\P_{u,T}$ as a Poisson point process.\end{proof}

\begin{proof}[Proof of Theorem \ref{thm:main1.4}]
By \cite[Theorem 3.3 and Proposition 3.10]{lyons-schramm-indistinguishability}, insertion tolerance together with unimodular-transitivity of $G$ implies the conclusion. \end{proof}

\begin{remark}\label{rmk:1.4}
Theorem \ref{thm:main1.4} also holds for unimodular networks by  \cite[Corollary 6.11 and Theorem 6.15]{aldous-lyons-unimodular}.
\end{remark}

Theorem \ref{thm:main1} follows from Theorems \ref{thm:main1.1}, \ref{thm:main1.2}, \ref{thm:main1.3} and \ref{thm:main1.4}.

\section{Proof of Theorem \ref{thm:main2}}

In this section, the final pieces are assembled to prove Theorem \ref{thm:main2}. We need to review standard facts from the theory of cost and treeability.

\subsection{Cost and treeability}
Let $(X,\mu)$ be a standard probability space. A Borel equivalence relation $\cR \subset X\times X$ is {\bf measure-preserving} if for every Borel automorphism $\phi:X \to X$ such that $(x,\phi x) \in \cR$ for every $x$ satisfies  $\phi_*\mu = \mu$. The relation $\cR$ is {\bf countable} if its equivalence classes are countable. It is {\bf ergodic} if for every measurable set $A \subset X$ that is a union of $\cR$-classes, $\mu(A) \in \{0,1\}$. For $x\in X$, let $[x]_\cR \subset X$ denote its $\cR$-class.

By Feldman-Moore \cite{MR0578656}, $\cR$ is a countable measure-preserving equivalence relation if and only if there is a countable group $\G$ and a measure-preserving action $\G \cc (X,\mu)$ such that $\cR=\{ (x,gx):~x \in X, g\in \G\}$ is the orbit-equivalence relation.

A {\bf graphing of $\cR$} is a measurable set $\cG \subset \cR$ such that
\begin{itemize}
\item $(x,y) \in \cG \Rightarrow (y,x) \in \cG$;
\item for a.e. $x\in X$, the graph $\cG_x$ with vertex set $[x]_\cR$ and edges $\cG \cap ([x]_\cR \times [x]_\cR)$ is connected.
\end{itemize}
A graphing $\cG$ is a {\bf treeing} if for a.e. $x\in X$, the graph $\cG_x$ is a tree. We say $\cR$ is {\bf treeable} if it admits a treeing.

The {\bf cost} of a graphing $\cG$ is
$$\textrm{cost}(\cG) = \frac{1}{2} \int \#\{y \in [x]_\cR:~(x,y) \in \cG\}~d\mu(x).$$
The cost of $\cR$ is the infimum of $\textrm{cost}(\cG)$ over all graphings $\cG$ of $\cR$. 

The theory of cost was initiated by Levitt \cite{MR1366313} and developed into a powerful tool by Gaboriau \cite{gaboriau-cost}. The main result we need is: if $\cG$ is a treeing then $\cG$ realizes the cost of $\cR$ \cite{gaboriau-cost}. That is, $\textrm{cost}(\cG)=\textrm{cost}(\cR)$. 

The next lemma will help us promote a treeing on part of $X$ to a treeing on all of $X$.

\begin{lem}\label{lem:treeable}
Let $(X,\mu)$ be a standard probability space, $\cR \subset X\times X$ a countable measure-preserving ergodic equivalence relation and $Y\subset X$ a subset with positive measure. Let $\cR_Y = \cR \cap Y \times Y$ be the restriction of $\cR$ to $Y$. Suppose $\cS \subset \cR_Y$ is an ergodic, treeable subequivalence relation with cost $>1$ (with respect to the normalized measure $\mu(Y)^{-1}\mu(\cdot \cap Y)$ on $Y$). Then there exists an ergodic treeable subequivalence relation $\cS' \subset \cR$ with cost $>1$.
\end{lem}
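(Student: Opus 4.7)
The plan is to extend $\cS$ to all of $X$ by anchoring each point of $X\setminus Y$ to a single point of $Y$ via a measurable selection map, and then to use the treeing of $\cS$ together with the resulting pendant edges as a treeing of the extension $\cS'$. This will make $\cS'$ connected across $Y$, keep it treeable, and add exactly $\mu(X\setminus Y)$ to the cost, which is just enough to stay above $1$.

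First, using Feldman-Moore to realize $\cR$ as the orbit equivalence relation of a countable group $\G\cc X$, and using ergodicity of $\cR$ to guarantee that a.e.\ $\cR$-class meets $Y$, pick a Borel map $\pi:X\setminus Y\to Y$ with $(x,\pi(x))\in\cR$ for all $x$ (enumerate $\G=\{g_1,g_2,\ldots\}$ and set $\pi(x)=g_n x$ for the first $n$ with $g_n x\in Y$). Let $\cS'\subset\cR$ be the equivalence relation generated by $\cS$ together with the symmetrized graph $\Gamma_\pi:=\{(x,\pi(x)):x\in X\setminus Y\}\cup\{(\pi(x),x):x\in X\setminus Y\}$.

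Second, take any treeing $\cG_\cS$ of $\cS$ and set $\cG_{\cS'}:=\cG_\cS\cup\Gamma_\pi$. Each $\cS'$-class has the form $C\cup\pi^{-1}(C)$ for some $\cS$-class $C\subset Y$, and $\cG_{\cS'}$ restricted to this class is the tree $\cG_\cS|_C$ with one pendant edge attaching each $x\in\pi^{-1}(C)$ to $\pi(x)\in C$; pendant edges cannot close cycles, so $\cG_{\cS'}$ is a treeing of $\cS'$. To compute its cost, decompose $X\setminus Y=\bigsqcup_n A_n$ with $\pi|_{A_n}$ injective via Lusin-Novikov; since $\cR$ preserves $\mu$, $\mu(\pi(A_n))=\mu(A_n)$, hence $\int_Y\#\pi^{-1}(y)\,d\mu(y)=\sum_n\mu(\pi(A_n))=\mu(X\setminus Y)$, and so $\textrm{cost}_\mu(\Gamma_\pi)=\mu(X\setminus Y)$. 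The hypothesis $\textrm{cost}_{\mu_Y}(\cS)>1$ (for the normalized measure $\mu_Y$) rescales to $\textrm{cost}_\mu(\cG_\cS)>\mu(Y)$, so
$$\textrm{cost}_\mu(\cG_{\cS'})=\textrm{cost}_\mu(\cG_\cS)+\mu(X\setminus Y)>\mu(Y)+\mu(X\setminus Y)=1.$$
Since $\cG_{\cS'}$ is a treeing of $\cS'$, Gaboriau's theorem gives $\textrm{cost}(\cS')=\textrm{cost}_\mu(\cG_{\cS'})>1$.

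Finally, for ergodicity: every $\cS'$-class meets $Y$ by construction, so if $A\subset X$ is $\cS'$-invariant then $A\cap Y$ is $\cS$-invariant, and ergodicity of $\cS$ on $(Y,\mu_Y)$ forces $\mu(A\cap Y)\in\{0,\mu(Y)\}$. If $\mu(A\cap Y)=0$, then $A\setminus Y\subset\pi^{-1}(A\cap Y)$ (because $x\in A\Rightarrow\pi(x)\in A\cap Y$), which has measure zero by the same Lusin-Novikov decomposition, so $\mu(A)=0$; the case $\mu(Y\setminus A)=0$ is symmetric applied to $X\setminus A$. The only place that requires any care is the cost bookkeeping between $\mu_Y$ and $\mu$; treeability and ergodicity after the pendant attachment are automatic because each point of $X\setminus Y$ is joined to $Y$ by a single edge that introduces no cycles.
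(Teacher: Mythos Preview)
Your proof is correct and follows essentially the same approach as the paper: attach each point of $X\setminus Y$ to a point of $Y$ by a single $\cR$-edge, take the union with a treeing of $\cS$, and observe that the result is a treeing of the generated relation with cost $\mu(Y)\cdot\textrm{cost}_{\mu_Y}(\cS)+\mu(X\setminus Y)>1$. You provide a bit more detail than the paper (the explicit Feldman--Moore construction of $\pi$, the Lusin--Novikov argument for $\textrm{cost}_\mu(\Gamma_\pi)=\mu(X\setminus Y)$, and the casework in the ergodicity step), but the idea is identical.
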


\begin{proof}
Let $\cG$ be a treeing of $\cS$. Also let $\phi:X\setminus Y \to X$ be any measurable map such that
\begin{enumerate}
\item for a.e. $x \in X\setminus Y$, $\phi(x) \in Y$
\item $(x,\phi(x)) \in \cR$ for all $x$.
\end{enumerate}
Such a map exists because $\cR$ is ergodic. Define $\cG'$ to be the union of $\cG$ with the edges $(x,\phi x)$ and $(\phi x,x)$ for $x \in X\setminus Y$. Also let $\cS'$ be the smallest subequivalence relation of $\cR$ that contains $\cG'$. Then $\cG'$ is a treeing of $\cS'$. Moreover the cost of $\cS'$ is equal to $\mu(Y)\textrm{cost}(\cS\resto Y) + \mu(X\setminus Y)$. This is because for every $x\in X \setminus Y$, we are adding an edge that adds one to the degree of $x$ and adds one to the degree of $\phi(x)$. Since $ \textrm{cost}(\cS\resto Y)>1$, this implies $\textrm{cost}(\cS')>1$.

In order to see that $\cS'$ is ergodic, let $A \subset X$ be a measurable union of $\cS'$ classes. Since $\cS' \cap Y \times Y = \cS$, $A \cap Y$ is a measurable union of $\cS$-classes. Since $\cS$ is ergodic,  $\mu(A \cap Y)\in \{0, \mu(Y) \}$. For a.e. $x \in X$, the $\cS'$-class of $x$ intersects $Y$. So the fact that $\mu(A \cap Y)\in \{0, \mu(Y) \}$ implies $\mu(A) \in \{0,1\}$. Since $A$ is arbitrary, $\cS'$ is ergodic.\end{proof}

The next lemma will help us promote the weak vND on a factor action to the weak vND on the action.

\begin{lem}\label{lem:factor1}
Suppose $\G \cc (X,\mu)$ is an ergodic probability-measure-preserving action with an essentially free factor $\G \cc (Y,\nu)$. If this factor is weakly vND  then $\G \cc (X,\mu)$ is also weakly vND.
\end{lem}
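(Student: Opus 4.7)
The plan is to lift the $\F_2$-action from $Y$ to $X$ using the fact that $\G$ acts essentially freely on $Y$, which lets us read off a cocycle $\F_2 \times Y \to \G$. If $\G$ is amenable there is nothing to do, so assume $\G$ is non-amenable; then weak vND on $(Y,\nu)$ supplies an essentially free (possibly non-ergodic) action $\F_2 \cc (Y,\nu)$ whose orbits are contained in $\G$-orbits.

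First I would set up the cocycle: since $\G \cc (Y,\nu)$ is essentially free, for each $a \in \F_2$ and a.e.\ $y \in Y$ there is a unique $g_a(y)\in\G$ with $a\cdot y = g_a(y)\cdot y$, and a short computation (comparing $(ab)\cdot y$ with $a\cdot(b\cdot y)$ and using essential freeness) shows $g_{ab}(y) = g_a(b\cdot y)\,g_b(y)$. I would then define
\[
a\cdot x := g_a(\pi x)\cdot x, \qquad a\in\F_2,\ x\in X,
\]
where $\pi:X\to Y$ is the factor map. The cocycle identity together with $\G$-equivariance of $\pi$ makes this a group action, and $\F_2$-orbits sit inside $\G$-orbits on $X$ by construction.

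The main step is checking that $\mu$ is preserved. For fixed $a$ partition $Y = \bigsqcup_{h\in\G} Y_{a,h}$ where $Y_{a,h} = \{y : g_a(y)=h\}$; on $\pi^{-1}(Y_{a,h})$ the map $x\mapsto a\cdot x$ is the restriction of the measure-preserving transformation $h\in\G$. Since $a:Y\to Y$ is a bijection and $\G \cc Y$ is essentially free, the translates $\{h\cdot Y_{a,h}\}_{h\in\G}$ form a partition of $Y$ (mod null sets): if $z$ lay in both $hY_{a,h}$ and $h'Y_{a,h'}$, then $a^{-1}(z)$ would have two preimages on the same $\G$-orbit, contradicting essential freeness. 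Then for any measurable $A\subset X$,
\[
\mu(a^{-1}A) = \sum_{h\in\G}\mu\bigl(h^{-1}(A\cap\pi^{-1}(hY_{a,h}))\bigr) = \sum_{h\in\G}\mu\bigl(A\cap\pi^{-1}(hY_{a,h})\bigr) = \mu(A),
\]
using $\G$-invariance of $\mu$ and that $\bigsqcup_h hY_{a,h}=Y$.

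Finally, essential freeness of $\F_2 \cc (X,\mu)$ follows because essential freeness of $\G \cc Y$ pulls back through $\pi$ to give essential freeness of $\G \cc X$ (if $g\cdot x = x$ for $g\ne e$ then $g\cdot\pi(x)=\pi(x)$ on a positive-measure set); hence $a\cdot x = x$ forces $g_a(\pi x)=e$, so $a\cdot\pi x = \pi x$, so $a=e$ by essential freeness of $\F_2\cc Y$. We make no ergodicity claim, which is exactly why we only conclude weakly vND. The only delicate point is the partition/measure-preservation argument; once it is in hand the rest is formal.
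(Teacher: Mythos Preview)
Your proof is correct and follows the same idea as the paper's: both lift the $\F_2$-action from $Y$ to $X$ via the factor map, using essential freeness of $\G \cc (Y,\nu)$ to make the lift well-defined (the paper phrases this as the factor map being \emph{class-bijective}, which is equivalent to your cocycle $g_a$ being well-defined). Your write-up is simply more explicit, verifying the cocycle identity, measure preservation, and essential freeness in detail, whereas the paper leaves these as routine consequences of class-bijectivity.
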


\begin{proof}
Because the factor is an essentially free action of $\G$, the factor map $\Phi:X \to Y$ is {\bf class-bijective}. This means that for a.e. $x\in X$, the restriction of $\Phi$ to the orbit $\G x$ is a bijection onto the orbit $\G \Phi(x)$. 

Let $\F_2 \cc (Y,\nu)$ be an essentially free, ergodic action of $\F_2$ whose orbits are contained in the orbit of the $\G$-action. For $x\in X$ and $f\in \F_2$, define $fx \in \G x$ to be the unique element such that $\Phi(fx)=f\Phi(x)$ where $\Phi:X \to Y$ is the factor map. This is well-defined for a.e. $x$ because $\Phi$ is class-bijective. Moreover it defines an essentially free action of $\F_2$ whose orbits are contained in the $\G$-orbits. \end{proof}


\subsection{Strong solidity}

Strong solidity will be used to promote the weak vND to vND. 

\begin{defn}
Let $\G \cc (X,\mu)$ be a probability-measure-preserving action and let $\cR=\{(x,gx):~x\in X, g\in \G\}$ be the orbit-equivalence relation. Then the action is said to be {\bf solidly ergodic} if for any measurable subequivalence relation $\cS \subset \cR$ there exists a measurable partition of $X$ into countably many pieces $X = \sqcup_{i=0}^\infty X_i$ such that 
\begin{enumerate}
\item each $X_i$ is union of $\cS$-classes,
\item $\cS \resto X_0$ is hyperfinite,
\item and $\cS \resto X_i$ is strongly ergodic for all $i>0$. 
\end{enumerate}
This concept was introduced in \cite{MR2647134} and given its name in \cite{gaboriau-icm}.
\end{defn}

\begin{prop}\label{prop:se}
Let $\G \cc (X,\mu)$ be an essentially free, solidly ergodic action. If this action is weakly vND then it is vND. 
\end{prop}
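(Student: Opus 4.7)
The plan is to use solid ergodicity to extract an ergodic piece of the (possibly non-ergodic) free $\F_2$-action given by weak vND, and then use cost theory (Lemma \ref{lem:treeable}) together with Hjorth's realization theorem to promote this to an essentially free ergodic $\F_2$-action on all of $X$ whose orbits sit inside the $\Gamma$-orbits. We may assume $\Gamma$ is non-amenable, so that vND is a nontrivial condition. Fix an essentially free action $\F_2 \cc (X,\mu)$ (not necessarily ergodic) whose orbits lie inside the $\Gamma$-orbits, and let $\cS \subset \cR$ denote the $\F_2$-orbit equivalence relation inside the $\Gamma$-orbit equivalence relation.

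First I would apply solid ergodicity of $\Gamma \cc (X,\mu)$ to $\cS$, obtaining an $\cS$-invariant partition $X = \sqcup_{i \geq 0} X_i$ with $\cS \restriction X_0$ hyperfinite and $\cS \restriction X_i$ strongly ergodic for every $i \geq 1$. Since each $X_i$ is $\cS$-invariant, it is automatically $\F_2$-invariant, so $\cS \restriction X_0$ is the orbit equivalence relation of an essentially free p.m.p.\ action of $\F_2$ on $X_0$. By the Connes--Feldman--Weiss/Ornstein--Weiss theorem, a free p.m.p.\ action has a hyperfinite orbit equivalence relation only when the acting group is amenable; since $\F_2$ is non-amenable, this forces $\mu(X_0) = 0$.

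Consequently there exists $i \geq 1$ with $\mu(X_i) > 0$, and on $(X_i,\mu_i)$ the relation $\cS \restriction X_i$ is an ergodic, essentially free, treeable (via the Cayley tree of $\F_2$) $\F_2$-orbit equivalence relation of cost $2$ by Gaboriau's theorem; in particular its cost exceeds $1$. I would then apply Lemma \ref{lem:treeable} with $Y := X_i$ to extend this to an ergodic treeable subequivalence relation $\cS' \subset \cR$ on the full space $X$ with cost strictly greater than $1$. Invoking Hjorth's theorem (any ergodic treeable p.m.p.\ equivalence relation of cost $> 1$ contains the orbit equivalence relation of an essentially free ergodic action of $\F_2$), $\cS'$ contains the orbit equivalence of an essentially free ergodic $\F_2$-action on $(X,\mu)$. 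Since its orbits lie inside the $\Gamma$-orbits by construction, this verifies vND.

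The crucial step is the reduction $\mu(X_0) = 0$: it is short but is precisely the place where non-amenability of $\F_2$ interacts with solid ergodicity in an essential way, allowing us to dispose of the hyperfinite part of the solid-ergodicity decomposition. Everything else combines two standard tools from measured equivalence-relation theory, namely Lemma \ref{lem:treeable} (already in the paper) and Hjorth's realization theorem (already implicit in the Gaboriau--Lyons reduction used in this article).
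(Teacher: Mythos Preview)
Your proposal is correct and follows essentially the same route as the paper's proof: apply solid ergodicity to the $\F_2$-orbit relation $\cS$, use non-amenability of $\F_2$ to kill the hyperfinite piece $X_0$, pick an ergodic piece $X_i$ of positive measure, feed it into Lemma~\ref{lem:treeable}, and finish with Hjorth's theorem via \cite[Proposition 14]{gaboriau-lyons}. If anything, you supply slightly more detail than the paper does (explicitly invoking Connes--Feldman--Weiss for $\mu(X_0)=0$ and noting that $\cS\resto X_i$ is treeable of cost $2$ before applying Lemma~\ref{lem:treeable}), but the argument is the same.
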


\begin{proof}
Without loss of generality, we may assume $\G$ is non-amenable. Let $\cR$ be the orbit-equivalence relation of this action. By assumption there exists an essentially free action $\F_2 \cc (X,\mu)$ such that if $\cS = \{(x, fx):~x\in X, f\in \F_2\}$ then $\cS \subset \cR$. Because the action is solidly ergodic, there exists a measurable partition of $X$ into countably many pieces $X = \sqcup_{i=0}^\infty X_i$ such that $\cS \resto X_0$ is hyperfinite and $\cS \resto X_i$ is strongly ergodic for all $i>0$. Because $\F_2$ is non-amenable, $X_0$ must be have measure zero. Without loss of generality, $\mu(X_1)>0$. By Lemma \ref{lem:treeable} there exists an ergodic treeable subequivalence relation $\cS' \subset \cR$ with cost $>1$. By \cite[Proposition 14]{gaboriau-lyons}, $\cS'$ contains a subrelation that is the orbit-equivalence relation for an ergodic essentially free action of a rank 2 free group. This last result uses Hjorth's Lemma \cite{hjorth-cost-attained}. \end{proof}

\subsection{The final step}

We will need a recent result of Seward:

\begin{thm}[\cite{seward-ornstein}]\label{thm:isom}
Let $\G$ be a countably infinite group and $(K,\kappa), (L,\lambda)$ probability spaces with the same Shannon entropies $H(\kappa)=H(\lambda)$. Then the corresponding Bernoulli shifts $\Ga \cc (K,\kappa)^\G$ and $\G \cc (L,\l)^\G$ are isomorphic.
\end{thm}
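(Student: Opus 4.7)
The plan is to identify the Shannon entropy $H(\kappa)$ as a complete measure-conjugacy invariant for Bernoulli shifts, using \emph{Rokhlin entropy} as a bridge. Define $h^{\mathrm{Rok}}_\G(X,\mu)$ to be the infimum of Shannon entropies $H(\alpha)$ over all countable partitions $\alpha$ of $X$ that generate $\G\cc(X,\mu)$; this is manifestly a measure-conjugacy invariant, and the canonical coordinate partition of $K^\G$ witnesses $h^{\mathrm{Rok}}_\G((K,\kappa)^\G) \le H(\kappa)$. The first step is to prove the matching lower bound $h^{\mathrm{Rok}}_\G((K,\kappa)^\G) = H(\kappa)$, which is established in \cite{bowen-jams-2010, MR2813530} when $\G$ is sofic; for general countably infinite $\G$ an independent argument is needed, for instance via an Abért--Weiss-type approximation showing that Bernoulli shifts are weak limits of cofinite-index quotient actions. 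Combined with $H(\kappa)=H(\lambda)$ this gives $h^{\mathrm{Rok}}_\G((K,\kappa)^\G) = h^{\mathrm{Rok}}_\G((L,\lambda)^\G)$.

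Second, prove a Sinai-style factor theorem: any ergodic p.m.p.\ action with Rokhlin entropy at least $h$ factors onto every Bernoulli shift with base entropy $h$. Applied in both directions this yields mutual factor maps $\Phi: (K,\kappa)^\G \to (L,\lambda)^\G$ and $\Psi: (L,\lambda)^\G \to (K,\kappa)^\G$. A preliminary reduction via an Ornstein-style $\bar d$-approximation of $\kappa$ by atomic measures of finite support allows one to assume the bases are finitely supported. The factor theorem is then proved by a Burton--Rothstein-type Baire-category argument in the Polish space of generating partitions of bounded Shannon entropy: one shows the generic such partition produces a Bernoulli factor. For amenable groups this uses Rokhlin towers; for arbitrary countable $\G$ one substitutes a $\G$-equivariant tiling/marker device for free actions (the main structural output of the author's work) to supply the combinatorial independence normally extracted from Følner sets.

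The final step is to upgrade the pair $(\Phi,\Psi)$ into an isomorphism. The strategy is to show that the coordinate partition of $(K,\kappa)^\G$ is \emph{relatively finitely determined} over $\Phi$, or equivalently that it can be $\bar d$-approximated by finite block-codes of the $\Psi$-pullback, which then forces $\Phi$ to be injective modulo null sets. This requires constructing explicit auxiliary randomness on the factor side, with $\bar d$-error controlled by the entropy defect across the factor map — a defect that vanishes precisely because the Rokhlin entropies match. The principal obstacle is the absence of Følner / Rokhlin-tower technology for non-amenable or otherwise exotic countable groups: the delicate $\bar d$-approximation and independent-complementation arguments of classical Ornstein theory must be rebuilt on top of the aforementioned tiling machinery, and making this rebuild uniform across all countably infinite $\G$ is where the substantive work lies.
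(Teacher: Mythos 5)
This statement is not proved in the paper at all: it is Seward's isomorphism theorem, imported as a black box from \cite{seward-ornstein}, so there is no internal argument to compare yours against. Judged on its own, your proposal has a fatal gap at the very first step. You need the lower bound $h^{\mathrm{Rok}}_\G\bigl((K,\kappa)^\G\bigr)=H(\kappa)$ for \emph{every} countably infinite group $\G$, but this is a well-known open problem: it is known for sofic groups (via sofic entropy, \cite{bowen-jams-2010,MR2813530}), and the paper itself is careful to state the conjugacy-invariance of base entropy only in the sofic case. An Ab\'ert--Weiss-type weak approximation does not help, since weak containment gives no lower bound on Rokhlin entropy; if your step 1 worked as stated, it would already settle that open problem. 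Consequently step 2 also breaks down as a route to mutual factors, because Seward's factor theorem \cite{seward-sinai-30} needs positive Rokhlin entropy of the source, which for non-sofic $\G$ is exactly what you cannot certify.

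Beyond that, even granting both factor maps, mutual factoring (weak isomorphism) does not imply isomorphism, so your third step carries essentially all of the Ornstein-theoretic weight, and ``relatively finitely determined over $\Phi$, forcing injectivity'' is a heuristic, not an argument; this is where the actual content of \cite{seward-ornstein} lies. Seward's proof does not take the complete-invariant route you propose: he constructs the isomorphism directly by coding arguments that adapt Ornstein's fundamental lemma to arbitrary countable groups (building on earlier partial results such as \cite{bowen-ornstein-2012}), precisely so as to avoid needing to know that the Rokhlin entropy of a Bernoulli shift equals its base entropy. If you want a workable plan, you should either restrict to sofic $\G$ (where your outline can be made honest) or restructure the argument so that no entropy lower bound for Bernoulli shifts is ever invoked.
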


\begin{cor}\label{cor:factor100}
Let $\G$ be a countably infinite group and $(K,\kappa), (L,\lambda)$ probability spaces with $H(\kappa)\ge H(\lambda)$. Then $\Ga \cc (K,\kappa)^\G$ factors onto $\G \cc (L,\l)^\G$.
\end{cor}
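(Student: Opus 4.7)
The plan is to reduce to Seward's isomorphism theorem (Theorem~\ref{thm:isom}) by building an auxiliary Bernoulli base that is isomorphic to $(K,\kappa)^\G$ but manifestly factors onto $(L,\l)^\G$. If $(L,\lambda)$ is trivial, the factor map is trivial, so assume otherwise.

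First I will construct a probability space $(M,\mu)$ with $H(\lambda \times \mu) = H(\kappa)$. If $H(\lambda) = H(\kappa)$ we may take $M$ to be a point. Otherwise set $s := H(\kappa) - H(\lambda) \in (0,\infty]$, with the convention $\infty - H(\lambda) = \infty$ when $H(\lambda) < \infty$. The auxiliary observation is that for every $s \in [0,\infty]$ there is a probability measure $\mu$ on a countable set with Shannon entropy exactly $s$. Indeed, two-point measures $(p,1-p)$ continuously realize all values in $[0,\log 2]$ as $p$ varies in $[0,1/2]$; finite products of such measures realize arbitrary finite values; and a measure on $\N$ with tails like $\mu(n) \propto 1/(n\log^2 n)$ has $H(\mu)=\infty$. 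Take $(M,\mu)$ to have $H(\mu)=s$. Since Shannon entropy is additive on products (both when finite and, by the $+\infty$ convention, when either factor is infinite), $H(\lambda\times\mu)=H(\lambda)+H(\mu)=H(\kappa)$.

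Next, by Theorem~\ref{thm:isom} the Bernoulli shifts $\G \cc (K,\kappa)^\G$ and $\G \cc (L\times M,\lambda\times\mu)^\G$ are measurably conjugate. There is a natural $\G$-equivariant identification
\[
(L\times M,\lambda\times\mu)^\G \;\cong\; (L,\lambda)^\G \times (M,\mu)^\G
\]
sending $(l_g,m_g)_{g\in\G}$ to the pair $\bigl((l_g)_{g\in\G},(m_g)_{g\in\G}\bigr)$; under this identification the diagonal shift corresponds to the product of the two shift actions. Projection onto the first factor is $\G$-equivariant and pushes $\lambda^\G \times \mu^\G$ forward to $\lambda^\G$, giving a factor map $(L\times M,\lambda\times\mu)^\G \to (L,\l)^\G$. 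Composing with the isomorphism from Theorem~\ref{thm:isom} yields the desired factor map $\G \cc (K,\kappa)^\G \to \G \cc (L,\l)^\G$.

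There is no serious obstacle: the only nontrivial ingredient is Seward's theorem, which is cited, and the construction of a base with prescribed Shannon entropy is elementary. The argument would work verbatim for any countably infinite group $\G$ since Theorem~\ref{thm:isom} has that generality.
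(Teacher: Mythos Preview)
Your proposal is correct and follows exactly the paper's approach: choose an auxiliary base $(M,\mu)$ with $H(\lambda\times\mu)=H(\kappa)$, apply Theorem~\ref{thm:isom} to identify $(K,\kappa)^\G$ with $(L\times M,\lambda\times\mu)^\G$, and then project to $(L,\lambda)^\G$. The only difference is that you spell out the edge cases (trivial $(L,\lambda)$, infinite entropy) and the existence of a measure with prescribed Shannon entropy, which the paper leaves implicit.
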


\begin{proof}
Let $(N,\nu)$ be another probability space with 
$$H(\kappa) = H(\lambda) + H(\nu) = H(\lambda \times \nu).$$
By Theorem \ref{thm:isom}, $\G \cc (K,\k)^\G$ is measurably conjugate to $\G \cc (L \times N, \l \times \nu)^\G$. The projection map $(L\times N)^\G \to L^\G$ gives the desired factor.\end{proof}

\begin{proof}[Proof of Theorem \ref{thm:main2}]
Without loss of generality, we may assume $\G$ is non-amenable. Because amenability is closed under direct unions, every non-amenable group $\G$ contains a finitely generated non-amenable subgroup $\G'$. Any Bernoulli shift action $\G \cc (K,\k)^\G$ restricts to a Bernoulli action of $\G'$. So we may assume without loss of generality that $\G$ is finitely generated.

By \cite{MR2647134} every Bernoulli shift action of $\G$ is solidly ergodic. By Proposition \ref{prop:se} and Lemma \ref{lem:factor1} it suffices to show that every Bernoulli action $\G \cc (K,\k)^\G$ admits an essentially free factor $\G \cc (X,\mu)$ that is weakly vND. Because Bernoulli shifts are mixing, every nontrivial factor of a Bernoulli shift is essentially free. By Corollary \ref{cor:factor100}, it suffices to prove that for every $\eps>0$ there exists a Bernoulli shift $\G \cc (K,\k)^\G$ with $H(\k)<\eps$ such that this Bernoulli shift admits a nontrivial factor that is weakly vND.

Let $G=(V,E)$ be a Cayley graph of $\G$. By Theorem \ref{thm:main1} there exist  $u,T>0$ such that $\G \cc (\Omega,\P_{u,T})$ is measurably conjugate to a Bernoulli shift with base entropy $<\eps$ and for $\P_{u,T}$-a.e. $\omega$, $(V,\cE_\omega)$ has infinitely many infinite clusters with infinitely many ends. Moreover, $(V,\cE_\bfom)$ has indistiguishable infinite clusters (where $\bfom \sim \P_{u,T}$).

The rest of the proof is essentially the same as in \cite{gaboriau-lyons}. Let $\G$ act on $E$ by left-translation. This induces an action on $2^E$, the space of all subsets of $E$ with the pointwise convergence topology. The map $\omega \mapsto \cE_\omega$ from $\Omega^{[0,\infty)}$ to $2^E$ is $\G$-equivariant. So $\cE_*\P_{u,T}$ is an invariant measure on $2^E$. Let $\cR=\{ (\omega, g\omega):~\omega \in 2^E, g\in \G\}$ be the orbit-equivalence relation.

Let $X_\infty \subset 2^E$ be the set of $\omega \subset E$ such that the identity element is contained in an infinite cluster of $(V,\cE_\omega)$. The {\bf cluster relation}, denoted $\cR^{cl} \subset \cR$, is the set of all pairs $(\omega,g\omega)$ such that the component of the graph $(V,\cE_\omega)$ containing the identity element also contains $g^{-1}$. Because $(V,\cE_\bfom)$ has indistinguishable infinite clusters, $\cR^{cl} \resto X_\infty$ is ergodic with respect to the measure $\cE_*\P_{u,T}$.  

By \cite[Cor. IV.24 (2)]{gaboriau-cost}, because the cluster containing the origin has infinitely many ends with positive probability, the cost of $\cR^{cl}\resto X_\infty$ is larger than 1. By either \cite[Lem.28.11; 28.12]{MR2095154} or \cite[Corollary 40]{pichot-thesis}, $\cR^{cl}\resto X_\infty$  contains an ergodic treeable subequivalence relation $\cF$ with cost larger than 1 (this is obtained from an arbitrary graphing by carefully removing cycles). By Lemma \ref{lem:treeable}, there is an ergodic treeable equivalence relation $\cS \subset \cR$ with cost $>1$. By \cite[Proposition 14]{gaboriau-lyons}, $\cS$ contains a subrelation that is the orbit-equivalence relation for an ergodic essentially free action of a rank 2 free group. This last result uses Hjorth's Lemma \cite{hjorth-cost-attained}.

So the action $\F_2 \cc (2^E, \cE_*\P_{u,T})$ is vND. Since this is a factor of the Bernoulli action $\G \cc (\Omega,\P_{u,T})$ which has base entropy $<\eps$, this completes the proof. \end{proof}

\section{Applications}

\subsection{Actions with positive Rokhlin entropy}\label{sec:app1}

This section proves Theorem \ref{thm:main3}. 

\begin{defn}
Fix a standard probability space $(X,\mu)$. The {\bf Shannon entropy} of a countable measurable partition $\cP$  of $X$ is defined by
$$H_\mu(\cP) := - \sum_{P \in \cP} \mu(P)\log \mu(P).$$
A partition $\cP$ is {\bf generating} for an action $\G \cc (X,\mu)$ if the smallest $\G$-invariant sigma-algebra containing $\cP$ contains all Borel subsets (up to measure zero). The {\bf Rokhlin entropy} of an ergodic probability-measure-preserving action $\G \cc (X,\mu)$ is defined to be the infimum of $H_\mu(\cP)$ over all generating partitions for the action. 
\end{defn}


Seward's generalization of Sinai's Factor Theorem \cite{seward-sinai-30} is:

\begin{thm}[Seward's Factor Theorem]\label{thm:seward-sinai-30}
Let $\G$ be a countably infinite group and let $\G \cc (X,\mu)$ be an essentially free, probability-measure-preserving, ergodic action with positive Rokhlin entropy. Then  $\G \cc (X,\mu)$ factors onto a Bernoulli shift.
\end{thm}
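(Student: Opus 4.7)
The plan is to follow the broad outline of Sinai's classical factor theorem for $\Z$-actions, but replace Kolmogorov--Sinai entropy with Rokhlin entropy and eliminate all appeals to Rokhlin towers and F\o lner sets. The target is: given $\Ga \cc (X,\mu)$ essentially free, ergodic, p.m.p., with $h_{\mathrm{Rok}}(\G \cc X) > 0$, produce, for some countable probability space $(K,\kappa)$, a $\Ga$-equivariant factor map $\Phi:X \to K^\G$ with $\Phi_* \mu = \kappa^\G$. Since any such factor $\sigma$-algebra is determined by a single partition $\cQ$ of $X$ such that the translates $\{g\cQ\}_{g\in \G}$ are jointly $\mu$-independent, the theorem reduces to exhibiting such a \emph{Bernoulli partition} $\cQ$ with $H_\mu(\cQ)$ any prescribed value in $(0, h_{\mathrm{Rok}})$.

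First, by the definition of Rokhlin entropy, fix a countable generating partition $\cP$ of $(X,\mu)$ with $H_\mu(\cP)$ finite and as close to $h_{\mathrm{Rok}}$ as desired. I would then build $\cQ$ inductively inside the $\Ga$-invariant $\sigma$-algebra generated by $\cP$. Enumerate the nontrivial finite subsets $F_1 \subset F_2 \subset \cdots \Subset \G$. Starting from an arbitrary finite partition $\cQ_0$ of entropy $H_\mu(\cQ_0) = h < h_{\mathrm{Rok}}$, successively produce partitions $\cQ_n$ that are measurable with respect to finer and finer joins $\bigvee_{g \in F_n} g\cP$, with $H_\mu(\cQ_n) = h$, such that for every $g \in F_n \setminus \{e\}$ the translate $g\cQ_n$ is $2^{-n}$-independent from $\cQ_n$ in total variation, and such that $d_\mu(\cQ_n, \cQ_{n-1}) < 2^{-n}$. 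The space of finite partitions with a fixed number of atoms is complete under the Rokhlin metric, so a Cauchy sequence $\{\cQ_n\}$ produces a limit $\cQ$ all of whose $\G$-translates are jointly independent, hence a Bernoulli factor of entropy $h$.

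The main obstacle, which is also Seward's principal technical innovation, is executing the inductive step without any group-theoretic Rokhlin lemma. In the amenable case one decouples the $F_n$-translates using a F\o lner tower, but for general countable (especially non-amenable) $\G$ the decoupling must come from \emph{within} the entropy budget of $\cP$. I would expect the crucial lemma to have the form: if $\cR$ is a partition with $H_\mu(\cR)$ sufficiently larger than $h$, and $F \Subset \G$ is finite, then there is a partition $\cQ \leq \cR$ (coarser than $\cR$) with $H_\mu(\cQ) = h$ such that $g\cQ$ is $\eps$-independent from $\cQ$ for every $g \in F \setminus \{e\}$. The proof of this lemma should proceed by a probabilistic/randomized choice of atoms within $\cR$, using the slack $H_\mu(\cR) - h$ as a reservoir of independent randomness; concentration of measure then shows a generic such $\cQ$ has the required near-independence. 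Combining this with Seward's own generator theorems (to ensure the $\cQ_n$'s inherit enough of the generating property of $\cP$ along the way) and a diagonal argument then yields the limiting Bernoulli factor. The delicate technical point throughout is verifying that the iterative refinement does not cause $H_\mu(\cQ_n)$ to drift, which requires careful bookkeeping of conditional entropies.
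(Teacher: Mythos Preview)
The paper does not prove this theorem. It is quoted verbatim from Seward's work \cite{seward-sinai-30} and used as a black box; the only text accompanying it is the sentence ``Seward's generalization of Sinai's Factor Theorem \cite{seward-sinai-30} is:''. So there is no proof in the paper to compare your proposal against.

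As for your sketch on its own merits: the broad shape---iteratively construct partitions $\cQ_n$ that are approximately $F_n$-independent and Cauchy in the partition metric, then pass to a limit Bernoulli partition---is indeed the classical Sinai template, and it is reasonable to expect Seward's argument to have this silhouette. But your proposal is not a proof; it is a description of what a proof would have to accomplish, with the hard part explicitly flagged as an ``obstacle'' and the key lemma stated only as what you ``would expect'' it to look like. In particular, the inductive step you outline (finding $\cQ \le \cR$ with prescribed entropy and $\eps$-independence across a finite window $F$, using only the entropy slack $H_\mu(\cR)-h$) is precisely the substance of Seward's paper, and the mechanism you suggest---a randomized choice of atoms plus concentration---does not by itself explain how to make the translates $\{g\cQ\}_{g\in F}$ nearly independent without any structural input about $\G$ (F{\o}lner sets, Rokhlin towers, or otherwise). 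Seward's actual argument is considerably more elaborate and builds on his earlier generator theorems in an essential way; your sketch does not supply the missing idea, it only names where it must go.
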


Corollary \ref{thm:main3} follows immediately from Theorem \ref{thm:main2}, Theorem \ref{thm:seward-sinai-30} and Lemma \ref{lem:factor1}.

\subsection{Factors of Bernoulli shifts}\label{sec:app2} 


In this section, we provide details to explain Corollaries \ref{thm:main4} and \ref{thm:main5}. We will need the following lemma proven in \cite[Lemma 6]{bowen-zero-entropy}.
\begin{lem}\label{lem:GL}
Let $\Ga \cc (X,\mu)$ be an essentially free factor of a Bernoulli shift and suppose that its orbit-equivalence relation contains a non-hyperfinite treeable subequivalence relation $\cS$. Then for every pair of probability spaces $(K,\kappa), (L,\lambda)$ the direct product action 
$$\Ga \cc (X \times K^\Ga, \mu \times \kappa^\Ga)$$
factors onto the Bernoulli shift $\Ga \cc (L,\lambda)^\Ga$. 
\end{lem}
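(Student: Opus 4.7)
The strategy is to combine Hjorth's theorem on realizing cost of treeable relations with the Ornstein-type theory for free groups (Ornstein--Weiss \cite{OW87}, Ball \cite{ball-factors1}, Bowen--Ornstein \cite{bowen-ornstein-2011}), using the auxiliary Bernoulli factor $K^\Ga$ as a randomization reservoir to promote an $\F_2$-equivariant code into a genuine $\Ga$-equivariant factor map onto $(L,\lambda)^\Ga$.

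First I would invoke Hjorth's Lemma \cite{hjorth-cost-attained} (in the form used in \cite[Proposition 14]{gaboriau-lyons}) to extract from the non-hyperfinite treeable subrelation $\cS$ a further subrelation $\cT \subseteq \cS$ that is the orbit-equivalence relation of an essentially free ergodic action $\F_2 \cc (X,\mu)$. Since $\cT$ lies inside the $\Ga$-orbit-equivalence relation, essential freeness of $\Ga \cc (X,\mu)$ produces a measurable cocycle $c:\F_2 \times X \to \Ga$ satisfying $c(f,x)\cdot x = f\cdot x$. This cocycle lets me lift the $\F_2$-action to the product space: set $f\cdot (x,y) := (f\cdot x,\, c(f,x)\cdot y)$ where $\Ga \cc K^\Ga$ is the Bernoulli shift. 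The resulting $\F_2$-action on $(X\times K^\Ga,\mu\times\kappa^\Ga)$ is measure-preserving, essentially free, and its orbits are contained in the $\Ga$-orbits on $X \times K^\Ga$.

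Next I would show that the lifted $\F_2$-action on $X \times K^\Ga$ is itself a factor of a Bernoulli shift of $\F_2$ whose base has entropy bounded below by $H(\kappa)$. The key observation is that along each $\F_2$-orbit, the coordinates of $K^\Ga$ transported by the cocycle $c$ form an i.i.d.\ process with marginal $\kappa$, because the Bernoulli shift is i.i.d.\ under every shift. By iterating a standard joining argument, one can freely enlarge $\kappa$ by joining with an auxiliary Bernoulli $\F_2$-factor, so for any prescribed entropy target I can realize the lifted $\F_2$-action as a factor of an arbitrarily large Bernoulli $\F_2$-shift. With enough entropy in hand, apply the result of Bowen--Ornstein \cite{bowen-ornstein-2011} (building on \cite{OW87,ball-factors1}): every Bernoulli shift over $\F_2$ factors onto every other Bernoulli shift over $\F_2$, in particular onto the co-induced Bernoulli shift $\F_2 \cc (L^\Ga,\lambda^\Ga)$ obtained by restricting the $\Ga$-Bernoulli shift along the orbit embedding. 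This yields an $\F_2$-equivariant measurable map $\Psi: X \times K^\Ga \to L^\Ga$.

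Finally, I would promote $\Psi$ to a $\Ga$-equivariant factor map. Because $c$ is a cocycle for the $\F_2$-action on $X$ and the $\F_2$-orbits meet every $\Ga$-orbit cofinitely in the combinatorial sense, the co-induction construction allows one to define $\Phi(z)(g) := \Psi(g^{-1}z)(e)$; the cocycle identity and $\F_2$-equivariance of $\Psi$ imply that $\Phi$ is $\Ga$-equivariant, while the i.i.d.\ structure of $\Psi_*(\mu \times \kappa^\Ga) = \lambda^\Ga$ along the $\F_2$-orbit combined with the shift-invariance of Bernoulli measure forces $\Phi_*(\mu \times \kappa^\Ga) = \lambda^\Ga$ on all of $L^\Ga$. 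The main obstacle is this last promotion step: one must arrange $\Psi$ with sufficient care (using the excess randomness in $K^\Ga$) so that the coordinates of $\Phi$ are jointly i.i.d.\ with law $\lambda$, not merely $\F_2$-indexed i.i.d.; this is exactly the coding argument of \cite[Lemma 6]{bowen-zero-entropy}, whose technical heart lies in a measurable selection along a Borel transversal of the $\F_2$-action inside each $\Ga$-orbit.
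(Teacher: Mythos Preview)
The paper does not prove this lemma at all: it simply records that the statement is \cite[Lemma 6]{bowen-zero-entropy} and moves on. So there is no argument in the paper to compare against; your sketch is in fact more than the paper provides, and its overall architecture --- extract a free $\F_2$-action via Hjorth, use the $K^\Gamma$-coordinates along $\F_2$-orbits to produce an $\F_2$-Bernoulli factor, apply the $\F_2$ factor theorem, then promote to $\Ga$-equivariance --- is indeed the strategy behind the cited lemma.

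That said, several formulations need repair. In Step~3 you say the lifted $\F_2$-action is ``a factor of'' a Bernoulli $\F_2$-shift, but what you actually need (and what your i.i.d.\ observation shows) is the reverse direction: the map $(x,y)\mapsto y(e)$ induces an $\F_2$-equivariant factor \emph{onto} $(K,\kappa)^{\F_2}$, since freeness of $\Ga\cc X$ makes $f\mapsto c(f,x)$ injective for a.e.\ $x$. The detour through ``enlarging $\kappa$'' is then unnecessary: \cite{bowen-ornstein-2011} already gives $(K,\kappa)^{\F_2}\to(L,\lambda)^{\F_2}$ regardless of entropies. The object ``co-induced Bernoulli shift $\F_2\cc(L^\Ga,\lambda^\Ga)$'' in Step~4 is not well-defined ($\F_2$ is not a subgroup of $\Ga$, and the cocycle action on $L^\Ga$ depends on $x$); the correct intermediate target is just $(L,\lambda)^{\F_2}$.

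For the promotion step you flag as the main obstacle, the clean argument avoids transversals entirely. Let $\psi:X\times K^\Ga\to L$ be the composite of $X\times K^\Ga\to K^{\F_2}\to L^{\F_2}$ with evaluation at $e$, and set $\Phi(z)(g)=\psi(g^{-1}z)$. For fixed $x$, the $\Ga$-orbit of $x$ decomposes into $\F_2$-orbits, and the $K^\Ga$-coordinates consumed by $\psi$ on distinct $\F_2$-orbits are indexed by disjoint subsets of $\Ga$ (again by injectivity of the cocycle). Hence, conditionally on $x$, the blocks $\{\Phi(z)(g)\}$ over different $\F_2$-orbits are independent, each with the correct $\lambda^{\F_2}$ law coming from the $\F_2$-factor map; this gives $\Phi_*(\mu\times\kappa^\Ga)=\lambda^\Ga$ directly. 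Note incidentally that this line of reasoning never uses the hypothesis that $X$ is a factor of a Bernoulli shift.
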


\begin{proof}[Proof of Corollary \ref{thm:main4}]
Let $(K,\kappa), (L,\lambda)$ be nontrivial standard probability spaces. For $i=1,2$, let $(K_i,\kappa_i)$ be nontrivial standard probability spaces  such that 
$$H(\kappa)= H(\kappa_1) + H(\kappa_2).$$
By Theorem \ref{thm:isom}, $\G \cc (K,\kappa)^\G$ is measurably conjugate to $\G \cc (K_1\times K_2,\kappa_1 \times \kappa_2)^\G$ which in turn is isomorphic to the direct product action $\G \cc  (K_1,\kappa_1)^\G \times (K_2,\kappa_2)^\G$. By Theorem \ref{thm:main2} and Lemma \ref{lem:GL}, this implies that $\G \cc (K,\kappa)^\G$ factors onto $\G \cc (L,\l)^\G$.\end{proof}

\begin{cor}
Let $\G$ be a non-amenable sofic group. Then the set of all nontrivial factors of Bernoulli shifts over $\G$ forms a single weak isomorphism class. 
\end{cor}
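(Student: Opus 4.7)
The plan is to show that every nontrivial factor of a Bernoulli shift over a non-amenable sofic group factors onto every Bernoulli shift, which, combined with Corollary \ref{thm:main4}, will yield a single weak isomorphism class. So let $X$ be a nontrivial factor of some Bernoulli shift $B_X = (K,\kappa)^\G$. First I would verify that $X$ is essentially free and ergodic. Ergodicity is immediate from that of $B_X$. Essential freeness follows from the mixing property of Bernoulli shifts: any stabilizer subgroup $H$ that fixes a positive-measure set must be finite by a standard mixing argument, and for nontrivial factors of Bernoulli shifts of non-amenable groups one can upgrade finite to trivial, as the proof of Theorem \ref{thm:main2} already notes.

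Next I would show that $X$ has positive Rokhlin entropy. This is where the soficity of $\G$ enters, via two external inputs: (i) the Kerr--Li theorem that Bernoulli actions of sofic groups have completely positive sofic entropy, so every nontrivial factor of $B_X$ has strictly positive sofic entropy; and (ii) the inequality, valid for sofic groups, that sofic entropy is a lower bound for Rokhlin entropy (Bowen, Alpeev--Seward). Together these give strictly positive Rokhlin entropy for every nontrivial factor of a Bernoulli shift over a sofic group.

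With essential freeness, ergodicity, and positive Rokhlin entropy in hand, Corollary \ref{thm:main5} applies and yields that $X$ factors onto every Bernoulli shift over $\G$. The same holds for any other nontrivial factor $Y$ of a Bernoulli shift. Given two such factors $X$ and $Y$, let $B_Y$ be a Bernoulli shift of which $Y$ is a factor: then $X$ factors onto $B_Y$, and $B_Y$ factors onto $Y$, so $X$ factors onto $Y$. Symmetry gives $Y$ factoring onto $X$, so $X$ and $Y$ are weakly isomorphic. The main obstacle is the second step, which requires invoking substantial external results from sofic entropy theory (this is also presumably why the sofic hypothesis appears in the statement). Once positive Rokhlin entropy of every nontrivial factor of a Bernoulli shift is granted, the rest is a direct combination of Corollaries \ref{thm:main4} and \ref{thm:main5}.
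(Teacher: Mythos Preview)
Your argument is correct and follows essentially the same route as the paper's: positive sofic entropy of nontrivial factors (due to Kerr) together with the inequality sofic entropy $\le$ Rokhlin entropy yields positive Rokhlin entropy, after which Seward's Factor Theorem and Corollary~\ref{thm:main4} (which you invoke together as Corollary~\ref{thm:main5}) finish the proof. Two minor remarks: the completely-positive-entropy result is Kerr's alone rather than Kerr--Li, and your discussion of essential freeness, while more explicit than the paper's, is not strictly needed beyond citing the same observation from the proof of Theorem~\ref{thm:main2}.
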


\begin{proof}
It is immediate from the definition of sofic entropy that Rokhlin entropy upper bounds sofic entropy \cite{bowen-survey}. By \cite{kerr-cpe} any nontrivial factor $\G \cc (X,\mu)$ of a Bernoulli shift has positive sofic entropy and therefore positive Rokhlin entropy.  Seward's Factor Theorem \ref{thm:seward-sinai-30} implies that $\G \cc (X,\mu)$ factors onto a Bernoulli shift. By Theorem \ref{thm:main4}, $\G \cc (X,\mu)$ factors onto all Bernoulli shifts and therefore $\G \cc (X,\mu)$ factors onto all factors of all Bernoulli shifts.\end{proof}

\begin{remark}
Popa and Sasyk showed that if $\G$ has property (T) then there exist factors of Bernoulli shifts over $\G$ that are not isomorphic to Bernoulli shifts \cite{popa-sasyk}. 
\end{remark}

Corollary \ref{thm:main5} is an immediate corollary of Theorem \ref{thm:seward-sinai-30} and Corollary \ref{thm:main4}.

\appendix

\section{Convergence to RI}

This section shows how FRI converges to the random interlacement process (RI) in distribution as $T\to\infty$. We will use notation as in \cite[\S 3.2]{MR3773383} to define the RI. The proof of Theorem \ref{thm:RI} below is similar in spirit to \cite[Proposition 3.3]{MR3773383}.

For $K \subset V$ and $w \in \cW[m,n]$, let
$$H^+_K(w) = \sup \{m\le i \le n :~ w(i) \in K\}.$$
For $w \in \cW_K$, let $w_K$ be the restriction of $w$ to the interval $[H_K(w),H_K^+(w)]$. We equip $\cW$ with the topology generated by open sets of the form
$$\{w \in \cW_K:~w_K = w'_K\}$$
where $K \subset V$ is finite and $w' \in \cW_K$. This is a complete Polish topology although it is non-compact if $V$ is infinite since there are infinitely many restrictions $w'_K$. 

The {\bf time shift} $\theta_k:\cW \to \cW$ is defined by $\theta_k:\cW[m,n] \to \cW[m-k,n-k]$,
$$\theta_k(w)(i) = w(i+k).$$
The space $\cW^*$ is defined to be the quotient $\cW^*=\cW/\sim$ where $w_1\sim w_2$ if and only if $w_1=\theta_k(w_2)$ for some $k$. Let $\pi:\cW \to \cW^*$ denote the quotient map and equip $\cW$ with the quotient topology. 

For $w \in \cW$, let $w^\gets \in \cW$ be the time reversed walk defined by $w^\gets(i)=w(-i)$. For $\sA \subset \cW$, let $\sA^\gets=\{w^\gets:~w \in \sA\}$. If $w \in \cW[m,n]$ and $m \le m' \le n' \le n$ then let $w\resto [m',n'] \in \cW[m',n']$ be the restriction. 

For a finite set $K \subset V$, let $Q_K$ be the measure on $\cW$ defined by
$$Q_K(\{w \in \cW:~w(0) \notin K\}) = 0$$
and for each $x \in K$ and Borel subsets $\sA,\sB \subset \cW$,
\begin{eqnarray*}
&&Q_K(\{w \in \cW:~ w\resto(-\infty,0] \in \sA, w(0)=x \textrm{ and } w\resto [0,\infty) \in \sB\})\\
&=& \deg_x P_x( \sA^\gets \cap \tH_K=\infty) P_x(\sB).
\end{eqnarray*}

\begin{thm}[Sznitman \cite{MR2680403} and Teixeira \cite{MR2525105}]\label{thm:ST}
Let $G$ be a transient graph. There exists a unique $\s$-finite measure $Q^*$ on $\cW^*$ such that for every Borel set $\sA \subset \cW^*$ and finite $K \subset V$,
$$Q^*(\sA \cap \pi(\cW_K)) = Q_K(\pi^{-1}(\sA)).$$
\end{thm}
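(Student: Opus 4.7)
The plan is to build $Q^*$ locally on each $\pi(\cW_K)$ for $K \Subset V$ using a canonical section, then glue along an exhaustion of $V$. First, define the canonical section
$$\cW^0_K \;:=\; \{w \in \cW : H_K(w) = 0 \text{ and } \tH_K(w^\gets) = \infty\}.$$
Every equivalence class in $\pi(\cW_K)$ has a unique representative in $\cW^0_K$: shift any walk hitting $K$ so its first visit to $K$ lies at time $0$, and then slide back further until no strictly-past visit to $K$ remains (finitely many slides a.s.\ by transience). So $\pi|_{\cW^0_K} : \cW^0_K \to \pi(\cW_K)$ is a Borel bijection. Inspection of the definition shows $Q_K$ is supported on $\cW^0_K$ with total mass $\sum_{x \in K} \deg_x P_x(\tH_K = \infty) < \infty$ for finite $K$. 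Define $Q^*_K := (\pi|_{\cW^0_K})_* Q_K$, a finite Borel measure on $\pi(\cW_K)$.

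The crux is the consistency $Q^*_L|_{\pi(\cW_K)} = Q^*_K$ whenever $K \subset L \Subset V$. Given $w \in \cW^0_L \cap \pi^{-1}(\pi(\cW_K))$, set $\tau := H_K(w)$ and $w' := \theta_\tau(w)$; since $w$ avoids $L \supset K$ on $(-\infty, 0)$ and avoids $K$ on $[0,\tau)$, we have $w' \in \cW^0_K$. Conversely, for $w' \in \cW^0_K$ the reversed walk $(w')^\gets$ visits the finite set $L$ only finitely often a.s.\ by transience, so $m := \max\{k \ge 0 : w'(-k) \in L\}$ is a.s.\ finite and $\theta_{-m}(w') \in \cW^0_L$. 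These are inverse bijections off a null set. The mass-matching then follows from reversibility of simple random walk ($\deg_x P_x(\bfw = \gamma) = \deg_y P_y(\bfw = \gamma^\gets)$ for finite paths $\gamma$ from $x$ to $y$) together with the strong Markov property at $\tau$: decomposing the reversed walk from $y \in K$ by its \emph{last} visit to $L$ gives
$$\deg_y P_y(\tH_K = \infty) \;=\; \deg_y P_y(\tH_L = \infty) \;+\; \sum_{x \in L \setminus K} \deg_x P_x(\tH_L = \infty)\, P_x\bigl(H_K < \infty,\ \text{first $K$-hit at } y\bigr),$$
after converting the $y\to x$ segments avoiding $K$ into $x\to y$ segments via the reversibility identity. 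This is exactly the identity that reconciles the $Q_L$-mass of walks with $w(0)\in L\setminus K$ first hitting $K$ at $y$ with the $Q_K$-mass concentrated at $y$; the future of the walk beyond $\tau$ matches directly by strong Markov.

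To assemble $Q^*$, fix an exhaustion $V = \bigcup_n K_n$ by finite sets. Since every $w \in \cW$ meets at least one vertex, $\cW^* = \bigcup_n \pi(\cW_{K_n})$. By consistency, $Q^*_{K_n} = Q^*_{K_{n+1}}|_{\pi(\cW_{K_n})}$, so the $Q^*_{K_n}$ extend to a $\sigma$-finite measure $Q^*$ on $\cW^*$ (finite on each $\pi(\cW_{K_n})$). Applying consistency to $K \subset K_n$ for arbitrary finite $K$ and large $n$ yields the identity in the theorem. Uniqueness is immediate: the identity forces any candidate measure to coincide with $Q^*_K$ on $\pi(\cW_K)$ for every finite $K$, which determines it on $\cW^*$. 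The main obstacle is the mass-matching identity in the middle paragraph, a standard but delicate time-reversal/last-exit argument that underlies all constructions of the random-interlacement intensity measure.
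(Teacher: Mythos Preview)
The paper does not prove this theorem; it is stated in the appendix as a cited result of Sznitman and Teixeira, with no proof given. So there is no ``paper's own proof'' to compare against.

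That said, your sketch is the standard construction from those references and is essentially correct. The canonical section $\cW^0_K$ (walks with first $K$-entrance at time $0$) is the right object, $Q_K$ is indeed supported there, and gluing along an exhaustion via consistency $Q^*_L|_{\pi(\cW_K)} = Q^*_K$ is exactly the Sznitman--Teixeira approach. Uniqueness is as you say.

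The one place to be careful is the consistency step. Your displayed identity is the equilibrium-measure (capacity) identity, which checks that the \emph{total masses} agree at each $y\in K$. But consistency requires the full measures to agree, not just their masses: you must match the joint law of (backward part, forward part) after the shift by $\tau$. The forward part matches by the strong Markov property, as you note. For the backward part, you need that under $Q_K$ the reversed walk from $y$ conditioned to avoid $K$ has the same law as: (segment from $y$ to its last $L$-visit $x$, avoiding $K$) concatenated with (walk from $x$ avoiding $L$). This is the full last-exit decomposition, and the reversibility identity $\deg_x P_x(\gamma)=\deg_y P_y(\gamma^\gets)$ for finite paths is used path-by-path, not just in aggregate. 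Your prose suggests you see this, but the displayed formula alone does not carry it; a reader might think you are only checking capacities. Spelling out the pathwise last-exit decomposition (or citing it) would close the gap.
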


Let $\Omega^*$ be the set of locally finite counting measures on $\cW^*$ with the weak* topology. This means that measures $\omega_i \in \Omega^*$ converge to $\omega_\infty \in \Omega^*$ iff for every finite $K \subset V$ and  $w' \in \cW_K$,
$$\omega_i( \pi (\{w \in \cW_K:~w_K = w'_K\})) \to \omega_\infty( \pi (\{w \in \cW_K:~w_K = w'_K\})).$$

A {\bf random interlacement} with intensity $u$ is a Poisson point process with intensity measure $u Q^*$. Let  $\P^*_u$ be its law. 

\begin{thm}\label{thm:RI}
For any $u>0$, the measures $\P_{u,T}\circ \pi^{-1}$ converge to $\P^*_u$ in the weak* topology as $T \to\infty$. 
\end{thm}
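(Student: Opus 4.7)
The plan is to recognize both $\P_{u,T}\circ\pi^{-1}$ and $\P^*_u$ as laws of Poisson point processes on $\cW^*$, with intensities $\mu_T:=u\,\pi_*\nu^{(T)}$ (where $\nu^{(T)}$ is extended by zero off $\cW[0,\infty)$) and $\mu:=uQ^*$ respectively, and then deduce weak$^*$ convergence from convergence of intensities on basic open sets. The weak$^*$ topology on $\Omega^*$ is generated by the evaluations $\omega\mapsto\omega(\pi(\sB_{K,w'}))$, where $\sB_{K,w'}:=\{w\in\cW_K:w_K=w'_K\}$ for $K\Subset V$ and $w'\in\cW_K$. For any finite collection of disjoint such sets the counts under a PPP are independent Poisson random variables determined by the intensities, so it suffices to prove pointwise convergence $\mu_T(\pi(\sB_{K,w'}))\to\mu(\pi(\sB_{K,w'}))$ for each basic set.

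For the main computation, fix $K\Subset V$ and $w'\in\cW[0,\ell]$ with $x_0:=w'(0)$ and $y_0:=w'(\ell)$ in $K$ and $w'_K=w'$. Every finite walk $w\in\pi^{-1}(\pi(\sB_{K,w'}))\cap\cW[0,\infty)$ decomposes uniquely as a pre-walk from some $z$ to $x_0$ of length $k\ge 0$ whose interior avoids $K$, followed by $w'$, followed by a post-walk from $y_0$ of length $m\ge 0$ that avoids $K$ after time $0$. Inserting this decomposition into the formula $\nu^{(T)}(w)=T^n(T+1)^{-n-2}\prod_{i=1}^{n-1}\deg_{w(i)}^{-1}$ for $n=k+\ell+m\ge 1$, reversing the pre-walk so that $\sum_{z,\mathrm{pre}}\prod_{i=1}^{k-1}\deg_{w^{\mathrm{pre}}(i)}^{-1}=\deg_{x_0}P_{x_0}(\tH_K>k)$, and similarly for the post-walk, yields (for $\ell\ge 1$)
\begin{equation*}
\nu^{(T)}\bigl(\pi^{-1}(\pi(\sB_{K,w'}))\bigr)=\frac{T^\ell}{(T+1)^{\ell+2}}\Bigl(\prod_{i=1}^{\ell-1}\deg_{w'(i)}^{-1}\Bigr)\Phi^{(T)}(x_0)\,\Phi^{(T)}(y_0),
\end{equation*}
where $\Phi^{(T)}(x):=\sum_{k\ge 0}(T/(T+1))^k P_x(\tH_K>k)$. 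Setting $p:=T/(T+1)$ and using the identity $\sum_{k=0}^{\tH_K-1}p^k=(1-p^{\tH_K})/(1-p)$ (with $p^{\infty}:=0$), one has $(1-p)\Phi^{(T)}(x)=E_x[1-p^{\tH_K}]\to P_x(\tH_K=\infty)$ by dominated convergence with integrable bound $1$. Since $T^\ell(T+1)^2/(T+1)^{\ell+2}\to 1$, this gives
\begin{equation*}
\lim_{T\to\infty}\nu^{(T)}\bigl(\pi^{-1}(\pi(\sB_{K,w'}))\bigr)=P_{x_0}(\tH_K=\infty)\,P_{y_0}(\tH_K=\infty)\prod_{i=1}^{\ell-1}\deg_{w'(i)}^{-1}.
\end{equation*}

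On the other side, applying Theorem~\ref{thm:ST} with the defining formula for $Q_K$ (choosing $\sA=\cW$ and $\sB$ the event $\{w(i)=w'(i)\ \forall\, 0\le i\le\ell,\ w(i)\notin K\ \forall\, i>\ell\}$) gives $Q^*\bigl(\pi(\sB_{K,w'})\bigr)=\deg_{x_0}P_{x_0}(\tH_K=\infty)\bigl(\prod_{i=0}^{\ell-1}\deg_{w'(i)}^{-1}\bigr)P_{y_0}(\tH_K=\infty)$. The factor $\deg_{x_0}\cdot\deg_{w'(0)}^{-1}=1$ cancels and the expression matches the limit above, establishing intensity convergence and hence the theorem.

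The main obstacle is the careful combinatorial bookkeeping, particularly the boundary cases $\ell=0$ (trivial middle walk, where an extra factor $\deg_{x_0}$ appears on both sides and cancels), $k=0$ or $m=0$ (absent pre- or post-walks), and $n=0$ (the entire walk is a single vertex). A secondary technical point is to justify that pointwise intensity convergence on the basic sets suffices for weak$^*$ convergence of the PPP laws in $\Omega^*$; this follows because the sets $\pi(\sB_{K,w'})$ are simultaneously open and closed in $\cW^*$, form a convergence-determining class for the generating topology on $\Omega^*$, and carry finite $Q^*$-mass, so the standard Kallenberg-type theorem on convergence of Poisson point processes applies.
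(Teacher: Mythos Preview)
Your proof is correct and follows the same strategy as the paper's: reduce weak$^*$ convergence of the Poisson laws to convergence of the intensities on the basic sets $\pi(\sB_{K,w'})$, then use the product structure of the walk (your pre/middle/post decomposition is exactly the paper's $\textrm{Con}_*Q^{(T)}_{K,\emptyset}$ from Proposition~\ref{prop:KL}, further conditioned on the middle segment) to reduce everything to $P^{(T)}_x\to P_x$. The paper simply invokes Proposition~\ref{prop:KL} and Theorem~\ref{thm:ST} and writes one line, whereas you redo the factorization by hand and make the limit $(1-p)\Phi^{(T)}(x)=E_x[1-p^{\tH_K}]\to P_x(\tH_K=\infty)$ explicit; the content is the same.
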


\begin{proof}
By Proposition \ref{prop:KL} and Theorem \ref{thm:ST}, it suffices to show for every finite $K \subset V$, $\textrm{Con}_*Q_{K,\emptyset}^{(T)}$ converges to $Q_K$ as $T \to\infty$. This follows from the fact that $\P_x^{(T)}$ converges to $\P_x$ as $T \to \infty$ (for any $x \in V$). \end{proof}

\bibliography{biblio}
\bibliographystyle{alpha}

\end{document}